  \renewcommand{\cref}[1]{#1}
\newtheorem{theorem}{Theorem}[section]
\newtheorem{lemma}[theorem]{Lemma}
\newtheorem{claim}[theorem]{Claim}
\newtheorem{conjecture}[theorem]{Conjecture}
\newtheorem{observation}[theorem]{Observation}
\newtheorem*{observation*}{Observation}
\newtheorem{proposition}[theorem]{Proposition}
\newtheorem{question}[theorem]{Question}
\newtheorem*{question*}{Question}
\newenvironment{definition*}
  {
   \innerdefinition}
  {\endinnerdefinition}
\theoremstyle{definition}
\newtheorem{defn}[theorem]{Definition}
\theoremstyle{remark}
\newtheorem*{remark}{Remark}
\newenvironment{poc}{\begin{proof}[Proof of claim]}{\end{proof}}
\newcommand*{\N}{\mathbb{N}}
\newcommand*{\Z}{\mathbb{Z}}
\newcommand*{\R}{\mathbb{R}}
\newcommand*{\cC}{\mathcal{C}}
\newcommand*{\cF}{\mathcal{F}}
\newcommand*{\cT}{\mathcal{T}}
\newcommand*{\mis}{\mathrm{mis}}
\newcommand*{\MIS}{\mathrm{MIS}}
\newcommand*{\msf}{\mathrm{msf}}
\newcommand*{\MSF}{\mathrm{MSF}}
\newcommand*{\spa}{\mathrm {span}}
\newcommand*{\ri}{\MakeUppercase{\romannumeral 1}}
\newcommand*{\rii}{\MakeUppercase{\romannumeral 2}}
\newcommand*{\riii}{\MakeUppercase{\romannumeral 3}}
\patchcmd{\thebibliography}{\labelsep}{\labelsep\itemsep=2pt \parsep=1.5pt \relax}{}{}
\title{Infinitely many groups exhibiting intermediate growth in maximal sum-free sets}
\author{
J\'ozsef Balogh\thanks{Department of Mathematics, University of Illinois Urbana-Champaign, Urbana, IL, USA, and Extremal Combinatorics and Probability Group (ECOPRO), Institute for Basic Science (IBS), Daejeon, South Korea. E-mail: \texttt{jobal@illinois.edu}. Supported by NSF grants RTG DMS-1937241, FRG DMS-2152488, the Arnold O. Beckman Research Award (UIUC Campus Research Board RB 24012), the Simons Fellowship,
Simons Collaboration grant [SFI-MPS-TSM-00013107, JB], and the Institute for Basic Science (IBS-R029-C4).
} 
\and 
Ramon I. Garcia\thanks{Department of Mathematics, University of Illinois Urbana-Champaign, Urbana, IL, USA. E-mail: \texttt{rig2@illinois.edu}. Supported by NSF grant RTG DMS-1937241 and UIUC Campus Research Board RB 24012.
} 
\and
Hong Liu\thanks{Extremal Combinatorics and Probability Group (ECOPRO), Institute for Basic Science (IBS), Daejeon, South Korea. E-mail: \texttt{hongliu@ibs.re.kr}. Supported by the Institute for Basic Science (IBS-R029-C4).}
\and
Ningyuan Yang\thanks{School of Mathematical Sciences, Fudan University, Shanghai, China, and Extremal Combinatorics and Probability Group (ECOPRO), Institute for Basic Science (IBS), Daejeon, South Korea. E-mail: \texttt{nyyang23@m.fudan.edu.cn}.}
}
\date{\today}
\begin{document}

\maketitle

\begin{abstract}
Given an Abelian group $G$, denote $\mu(G)$ the size of its largest sum-free subset and $f_{\max}(G)$ the number of maximal sum-free sets in $G$. Confirming a prediction by Liu and Sharifzadeh, we prove that all even-order $G\ne \Z_2^k$ have exponentially fewer maximal sum-free sets than $\Z_2^k$, i.e., $f_{\max}(G) \leq 2^{(1/2-c)\mu(G)}$, where $c > 10^{-64}$.
    
We construct an infinite family of Abelian groups $G$ with intermediate growth in the number of maximal sum-free sets, i.e., with  $
    2^{(\frac{1}{2}+c)\mu(G)}\leq f_{\max}(G) \leq 3^{(\frac{1}{3}-c)\mu(G)}
    $, where $c=10^{-4}$. This disproves a conjecture of Liu and Sharifzadeh and also answers a question of Hassler and Treglown in the negative.

Furthermore, we determine for every even-order group $G$, the number of maximal distinct  sum-free sets (where a distinct sum is $a+b= c$ with distinct $a,b,c$): it is 
$ 2^{(1/2+o(1))\mu(G)}$
    with the only exception being $G=\Z_2^k \oplus \Z_3$, when this function is  $3^{(1/3+o(1))\mu(G)}$, refuting a conjecture of Hassler and Treglown.

Our proofs rely on a container theorem due to Green and Ruzsa.
Another key ingredient is a sharp upper bound we establish on the number of maximal independent sets in graphs with given matching number, which interpolates
between the classical results of Moon and Moser, and Hujter and Tuza. A special case of our bound implies that every $n$-vertex graph with a perfect matching has at most $2^{n/2}$ maximal independent sets, resolving another conjecture of  Hassler and Treglown.
\end{abstract}

\section{Introduction}

Sum-free subsets of the integers or Abelian groups have been widely studied in additive combinatorics. Let $[n] \coloneqq \{1,\dots,n\}$. We call a triple $\{x,y,z\}$ a {\it Schur triple} if $x + y = z$. A set $S \subseteq [n]$ is {\it sum-free} if $S$ contains no Schur triple. Denote by $f(n)$ the number of sum-free subsets of $[n]$, and by $f_{\max}(n)$ the number of maximal sum-free sets of $[n]$. Note that all subsets of the odd integers and $\{\lfloor n/2\rfloor+1,\dots,n\}$ are sum-free, hence $f(n) \geq 2^{n/2}$. In \cite{cameron1990number}, Cameron and Erd\H os conjectured that $f(n)/2^{n/2}$ tends to two different constants, depending on the parity of $n$, which was
confirmed independently by Green \cite{2003The} and Sapozhenko~\cite{SAPOZHENKO20084361}. In a subsequent paper, Cameron and Erd\H os \cite{camerd} posed the problem of determining $f_{\max}(n)$. This was resolved by the first and third authors, jointly with Sharifzadeh and Treglown \cite{J2015The,2018maxsumfree}, who showed that $f_{\max}(n)/2^{n/4}$ tends to four constants depending on $n$ modulo $4$.

The problem of counting sum-free sets in Abelian groups is more intricate. Let $\mu(G)$ denote the maximum size of a sum-free subset of an Abelian group $G$. The study of $\mu(G)$ dates back to the work of Diananda and Yap~\cite{1969Maximal} in 1969, and it  was finally determined for every group by Green and Ruzsa~\cite{2005Sum}. 

\begin{theorem}[\cite{1969Maximal,2005Sum}]\label{thm:mu}
Let $G$ be an Abelian group of order $n$. Then $G$ falls into one of the following types, and $\mu(G)$ is given as follows.
\begin{enumerate}[label=(\roman*)]
\item \label{typeI} If $n$ is divisible by a prime $p\equiv 2\pmod{3}$, let $p$ be the smallest such prime; then $G$ is of type \ri($p$)\ and $\mu(G) = (1/3+1/3p)n$.
\item \label{typeII} If $n$ is not divisible by any prime $p\equiv 2\pmod{3}$ but $n$ is a multiple of $3$, then $G$ is of type \rii\ and $\mu(G) = n/3$.
\item \label{typeIII} Otherwise, $G$ is of type \riii\ and $\mu(G) = (1/3-1/3m)n$,
where $m$ is the exponent of $G$.
\end{enumerate}
\end{theorem}

Denote by $f(G)$ and $f_{\max}(G)$  the number of sum-free subsets and the number of maximal sum-free subsets of $G$, respectively. Green and Ruzsa \cite{2005Sum} proved that $f(G) = 2^{(1+o(1))\mu(G)}$. Let us define the {\it growth} of $f_{\max}(G)$ to be the smallest $g \in \R$ such that
\[
f_{\max}(G) \leq g^{(1+o(1))\mu(G)}.
\]
Balogh, Liu, Sharifzadeh and Treglown \cite{2018maxsumfree} proved that, similarly to the integer setting, $f_{\max}(G)$ is exponentially smaller than $f(G)$ for all finite Abelian groups $G$: $f(G)_{\max} \le 3^{(1/3+o(1))\mu(G)}$. 
In other words, $f_{\max}(G)$ has growth at most $3^{1/3}$ for every group $G$.

Recall that in the integer setting, the growth of $f_{\max}([n])$ is $2^{1/2}$. It was asked in \cite{2018maxsumfree} whether the growth of  $f_{\max}(G)$ is also at most $2^{1/2}$; they showed that it is the case for $\Z_2^k$. It was also conjectured~\cite{2018maxsumfree} that there are infinitely many groups for which the growth of $f_{\max}$ is strictly less than $2^{1/2}$. For the former question, Liu and Sharifzadeh~\cite{2021GroupsLiu} gave a negative answer, showing that there are infinitely many groups for which $f_{\max}$ are of maximum growth $3^{1/3}$. 
 They settled the latter conjecture affirmatively, showing that for almost all even-order groups, the growth of $f_{\max}$ are at most $2^{1/2-c}$ for some absolute constant $c>0$. In~\cite{2021GroupsLiu}, it was mentioned that \textit{``Our result suggests that $\Z_2^k$ might be the only exception among all even order groups achieving the bound $2^{(1/2+o(1))\mu(G)}$.''}

Our first result confirms this belief.

\begin{theorem}\label{thm:even}
    There exists a constant $c > 10^{-64}$ and an integer $n_0$ such that the following holds. Let $G$ be an Abelian group with even order $n>n_0$. If $G\ne \Z_2^k$, then
    \[
    f_{\max}(G) \leq 2^{(1/2-c)\mu(G)}.
    \]
\end{theorem}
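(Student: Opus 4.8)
Since $n$ is even, $G$ is of type \ri$(2)$, so $\mu(G)=n/2$ and the task is to improve, by a constant factor in the exponent, on the count $2^{n/4}=2^{\mu(G)/2}$ of maximal sum-free sets attained by $\Z_2^k$. I would first run a container reduction: apply the container theorem of Green and Ruzsa together with their stability result for almost sum-free sets, obtaining a family of $2^{o(n)}$ containers, each of which — after discarding an $o(n)$-sized defect — is a union of cosets $\bigcup_{a\in A}(H+a)$ of a subgroup $H\le G$ with $A\subseteq\bar G\coloneqq G/H$ sum-free of size $\le\mu(\bar G)$, and such that every maximal sum-free set of $G$ lies in one of them. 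If $[G:H]$ is odd then $\bar G$ has odd order, so $\mu(\bar G)\le|\bar G|/3$ and the container has size $\le(1/3+o(1))n$; invoking the bound of Balogh, Liu, Sharifzadeh and Treglown (whose argument yields at most $3^{(1/3+o(1))|C|}$ maximal sum-free subsets of a container $C$), all such containers together contribute at most $2^{o(n)}\cdot 3^{(1/9+o(1))n}=2^{(0.18+o(1))n}$ maximal sum-free sets, which is below $2^{(1/2-c)\mu(G)}=2^{(1/4-c/2)n}$ for all $c<1/8$. Iterating the structure theorem within the quotient then reduces the remaining ``large'' containers to the case $[G:H]=2$: the container is $o(n)$-close to a nontrivial coset $H+a$ of an index-two subgroup $H$, and $H+a$ is itself sum-free. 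This is the case that produces the $\Z_2^k$ extremal behaviour.

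The second step is to translate the count inside such a container into a count of maximal independent sets of an explicit graph. Write a maximal sum-free set inside the container as $S=T\cup B$ with $T\subseteq H+a$ and $B\subseteq H$ (a disjoint union). The conditions that $S$ be sum-free and maximal in $G$ unpack — after a sumset estimate via Kneser's theorem, which forces $B$ to avoid the large set $T+T$ and hence $|B|=o(n)$ — into: $B$ is sum-free in $H$ and disjoint from $T+T$, and $T$ is a \emph{maximal independent set} of the graph $\Gamma_B$ on vertex set $(H+a)\setminus\{x:2x\in B\}$ in which $x$ is joined to $x+b$ and to $b-x$ for every $b\in B$. Summing over the $2^{o(n)}$ choices of $H$, $a$ and $B$, it remains to show that every admissible $\Gamma_B$, which has at most $|H+a|=\mu(G)$ vertices, has at most $2^{(1/2-c)\mu(G)}$ maximal independent sets. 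The crucial feature is that the isomorphism type of $\Gamma_B$ is governed by the $2$-torsion of $G$: an order-two element $b$ makes $x\mapsto x+b$ a fixed-point-free involution equal to $x\mapsto b-x$, contributing a perfect matching, whereas elements of order $>2$ make these two maps distinct with longer orbits, producing cycles of length $\ge 4$ and related configurations that are strictly denser than a matching.

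Finally I would apply the new sharp upper bound on the number of maximal independent sets of an $N$-vertex graph in terms of its matching number, interpolating between Moon--Moser ($3^{N/3}$, from disjoint triangles) and Hujter--Tuza together with its strengthening ($2^{N/2}$, from any triangle-free graph and, more generally, from any graph with a perfect matching). For $G=\Z_2^k$ one has $2G=\{0\}$, so taking $B=\{b_0\}$ with $b_0$ of order two makes $\Gamma_{\{b_0\}}$ a perfect matching on all $\mu(G)=n/2$ vertices and the count exactly $2^{n/4}=2^{\mu(G)/2}$ — extremal, matching the known lower bound $f_{\max}(\Z_2^k)\ge 2^{(1/2-o(1))\mu(G)}$. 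The heart of the argument is that this configuration is rigid: if $G$ is even with $G\ne\Z_2^k$ then $|G[2]|\le n/2$, so for every index-two $H$ at least $n/4$ of the elements of $H+a$ have order $>2$; consequently, for every admissible $B$, a positive proportion of the vertices of $\Gamma_B$ are deleted (when $2x\in B$ has $\Omega(n)$ solutions) or lie on cycles of length $\ge 4$ (and related subgraphs) on which the number of maximal independent sets grows at a rate strictly below $2^{1/2}$ per vertex. Applying the interpolation bound componentwise then gives $\le 2^{(1/2-c)\mu(G)}$ for an absolute $c$; summing over all containers and choices and absorbing the $o(1)$ loss into a slightly smaller constant once $n>n_0$ completes the proof.

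\emph{Main obstacle.} The technical core is the reduction and the graph analysis for the groups ``closest'' to $\Z_2^k$ — those of large $2$-rank such as $\Z_2^k\oplus\Z_4$, $\Z_2^k\oplus\Z_8$, or $\Z_2^k$ times a small odd group. One must (a) make the correspondence $S\leftrightarrow(B,T)$ fully rigorous, in particular tracking maximality in all of $G$ rather than only within the container and handling the coupled constraints between $B$ and $T+T$; (b) carry out the case analysis of $\Gamma_B$ over all admissible $B$ — including $|B|\ge 2$ and $b_0$ of small order such as $3$, where the translation map alone would produce triangles rather than matching edges, so that the saving must instead come from the reflection edges and the deletions — uniformly over all even $G\ne\Z_2^k$, so that the gain does not degrade with the odd part or the $2$-rank; and (c) establish the matching-number interpolation inequality for maximal independent sets in exactly the sharp form required, including its perfect-matching endpoint. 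Threading a single explicit constant $c>10^{-64}$ through all of these quantitative estimates is the crux, and accounts for the very small numerical value.
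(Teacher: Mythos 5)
Your high-level plan is broadly the same as the paper's: container + stability reduces the problem to bounding $\mis$ of a link graph $\Gamma_{B}$ (the paper's $L_S[A]$) on a coset $A=\varphi^{-1}(1)$ of an index-two subgroup, and then one exploits a matching-number interpolation for $\mis$, with the $2$-torsion of $G$ controlling whether the matching edges "escape" into denser structures ($C_4$'s, $T_\ell$'s) that push $\mis$ below $2^{|A|/2}$. That much the paper does in Theorem~5.4. But there is a genuine gap that your proposal does not close, and it is exactly the point you flag as obstacle~(b): you cannot, with this case analysis alone, get a constant $c$ uniform over all even groups. The difficulty is not the $2$-rank being large (that is the easy case) but the $2$-rank being \emph{small} relative to $|G|$: for $G=\Z_{2^{\alpha_1}}\oplus\cdots\oplus\Z_{2^{\alpha_r}}\oplus K$ with $|K|$ odd, the smallest-order element $s\in S$ can have order as large as about $n/2^{r-1}$, and when $|s|$ grows linearly in $n$ the $C_{|s|}$/$T_{|s|}$ components become few and long, so a per-component saving of $(31/32)^{\#\text{components}}$ or similar yields only $2^{n/4-O(1)}$, not $2^{n/4-\Omega(n)}$. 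The paper's own new argument (Theorem~5.4) only gives $c=1/(300C^2)$ for groups with $n/2^r\le C$, which degrades with $C$; it is combined with a completely separate ingredient — the Liu--Sharifzadeh theorem (Theorem~5.3, from [2021GroupsLiu]) — to cover the regime $n/2^r\ge C\ (=10^{30})$, and only the combination yields an absolute $c>10^{-64}$. Your proposal has no substitute for that ingredient and would therefore not produce a uniform constant.

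Two further, smaller points. First, your claim that "an order-two element $b$ makes $x\mapsto x+b$ a fixed-point-free involution equal to $x\mapsto b-x$" is wrong unless $G=\Z_2^k$: the two maps agree at $x$ exactly when $2x=0$, not when $2b=0$. The correct picture — which you partly recover later — is that for $s$ of order $2$ the translation edges $\{x,x+s\}$ and reflection edges $\{x,s-x\}$ combine to form quadruples $\{x,x+s,-x,s-x\}$, which are induced $C_4$'s whenever $2x\notin\{0,s\}$; it is the count of these $C_4$'s (and of loops at solutions of $2x\in S$), via a strengthening such as the paper's Lemma~5.2, not the matching bound alone, that produces the saving in the crucial $|s|=2$ case. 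Second, the appeal to Kneser's theorem is a detour: the container lemma together with the Green--Ruzsa stability statement already places $A$ at $\varphi^{-1}(1)$ and forces $S\subseteq\ker\varphi$ with $|S|=o(n)$; no sumset machinery beyond that is used.
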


In~\cite{2021GroupsLiu}, Liu and Sharifzadeh made an even bolder conjecture that the same is true for all type \ri \ groups.

\begin{conjecture}[\cite{2021GroupsLiu}]\label{conj:typeI-few}
    All type I groups $G$ except $\Z_2^k$ have exponentially fewer maximal sum-free sets than $2^{\mu(G)/2}$.
\end{conjecture}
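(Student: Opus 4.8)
The goal is to extend \cref{thm:even} from even order --- that is, type \ri($2$) --- to all type \ri($p$) groups with $p\ge 5$; since $\Z_2^k$ is exactly the even-order group excluded by \cref{thm:even}, the two cases together establish \cref{conj:typeI-few}. Fix a type \ri($p$) group $G$ of order $n$, so $\mu(G)=(1/3+1/(3p))n$, and recall (Diananda--Yap, Green--Ruzsa) that every sum-free subset of $G$ of size $(1-o(1))\mu(G)$ is, after deleting $o(n)$ elements, of the form $\phi^{-1}(B)$ for a surjective homomorphism $\phi\colon G\to\Z_p$ and a maximum sum-free set $B\subseteq\Z_p$. Using the container theorem of Green and Ruzsa, for each $\varepsilon>0$ one obtains a family $\cC$ of subsets of $G$ with $|\cC|\le 2^{\varepsilon\mu(G)}$ such that every sum-free subset of $G$ lies inside some $C\in\cC$, and each $C$ is within $\varepsilon n$ elements of some $\phi^{-1}(B)$ as above (there are only $n^{O(1)}$ such pairs $(\phi,B)$). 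As $|\cC|=2^{o(\mu(G))}$, it suffices to bound, for every fixed container $C$, the number of maximal sum-free subsets of $G$ contained in $C$ by $2^{(1/2-c')\mu(G)}$ for an absolute $c'>0$.

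\textbf{From a container to a graph.} Fix $C$, approximately $\phi^{-1}(B)$, and split it into a ``core'' $C_0\subseteq\phi^{-1}(B)$ that is genuinely sum-free and has size $(1-o(1))\mu(G)$, and a ``shell'' of size $o(\mu(G))$ comprising the $\varepsilon n$-error elements together with the $o(n)$ elements of $\phi^{-1}(B)$ that meet a Schur triple inside $C$. A maximal sum-free set $S\subseteq C$ is then obtained by first choosing its intersection with the shell ($2^{o(\mu(G))}$ choices), and then, with that choice frozen, selecting a maximal admissible subset of the remaining ``active'' elements of $C_0$ subject to the Schur constraints these now create; these constraints have bounded uniformity, and after splitting off the $2^{o(\mu(G))}$ atypical configurations the count is controlled by the number of maximal independent sets of an auxiliary graph $L$ on $N\le(1+o(1))\mu(G)$ vertices.

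\textbf{The maximal-independent-set bound.} Here one applies our interpolation theorem: an $N$-vertex graph with matching number $\nu\ge N/3$ has at most $3^{N-2\nu}2^{3\nu-N}$ maximal independent sets (and at most $3^{\nu}$ if $\nu\le N/3$), a bound equal to $2^{N/2}$ exactly when $\nu=N/2$. The group $\Z_2^k$ is the extremal case: there $L$ is essentially a perfect matching on $\approx\mu(G)$ vertices, which is what produces $2^{(1/2+o(1))\mu(G)}$ maximal sum-free sets. For $p\ge 5$ one must instead show that $L$ is quantitatively sub-extremal --- concretely, that a constant fraction of every near-extremal container is \emph{forced} into every maximal sum-free subset of it (because, $B$ being a maximum sum-free subset of $\Z_p$ with $p\ge 5$ rather than the single nonzero coset of $\Z_2$, the Schur constraints coupling the $\phi$-classes are denser), so that $L$ has at most $(1-\delta)\mu(G)$ vertices, possibly with an additional gain in its matching number. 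Feeding $N\le(1-\delta)\mu(G)$ into the interpolation bound (or even just Moon--Moser) and reabsorbing the $2^{o(\mu(G))}$ factors from the previous steps gives $f_{\max}(G)\le 2^{(1/2-c)\mu(G)}$.

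\textbf{The main obstacle.} Everything hinges on the last step: establishing a robust quantitative dichotomy to the effect that the \emph{only} sum-free structure whose link graph attains the $2^{(1/2+o(1))\mu(G)}$ bound is the index-$2$-subgroup structure, which lives only in $\Z_2^k$. This statement must be made stable under the $\varepsilon n$ container error --- so that the $2^{\varepsilon\mu(G)}$ overhead is genuinely negligible --- and uniform both in $p$ and in the $\Theta(p)$ distinct maximum sum-free subsets $B\subseteq\Z_p$ that occur once $p$ is large. These stability and uniformity requirements, rather than the graph-theoretic inequality or the container reduction, are where essentially all of the work lies.
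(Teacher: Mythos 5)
The statement you set out to prove is a \emph{conjecture} quoted from an earlier paper, and this paper \emph{disproves} it rather than proving it. Theorem~\ref{thm:growth} (via Theorem~\ref{thm:intermediate-growth}) exhibits infinitely many type I(23) groups $G=\Z_3^2\oplus\Z_{23}\oplus\Z_p$ (primes $p\ge 29$) with $f_{\max}(G)\ge 3^{n/9}\ge 2^{(1/2+10^{-4})\mu(G)}$ --- exponentially \emph{more}, not fewer, maximal sum-free sets than $2^{\mu(G)/2}$. The conjecture does hold for type I(2) groups other than $\Z_2^k$ (Theorem~\ref{thm:even}) and type I(5) groups (Liu--Sharifzadeh), which is presumably what made it plausible, but it fails already at $p=23$.

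Your outline of the container reduction, the passage to a link graph, and the maximal-independent-set bound is a faithful sketch of the method the paper uses throughout. The fatal gap is your ``main obstacle'' paragraph: you assert without argument that for $p\ge 5$ the auxiliary graph $L$ must be quantitatively sub-extremal --- that a constant fraction of the container is forced into every maximal sum-free set, so $L$ has at most $(1-\delta)\mu(G)$ free vertices --- and justify this only by the heuristic that ``the Schur constraints coupling the $\varphi$-classes are denser'' when $p\ge 5$. That assertion \emph{is} the conjecture, and it is false. The paper's lower-bound construction (start of the proof of Theorem~\ref{thm:intermediate-growth}) is an explicit counterexample: with $A=\{1\}\oplus\Z_3\oplus\Z_{23}\oplus\Z_p$ (a fiber of the $\Z_3$-projection, of size $n/3$) and $S=\{s\}$ with $s=(0,1,0,0)$, the link graph $L_S[A]$ is a disjoint union of $n/9$ triangles $\{a,\,a+s,\,a+2s\}$ --- the \emph{Moon--Moser} extremal configuration, not the near-perfect matching that underlies $\Z_2^k$. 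Its $3^{n/9}$ maximal independent sets correspond exactly to the sum-free sets $A_f$, each extending to a different maximal sum-free set of $G$, and $3^{n/9}>2^{\mu(G)/2}$. What controls the shape of $L$ is not the magnitude of $p$ but whether small-order factors (here a $\Z_3$) sit in the kernel of a suitable homomorphism, allowing one adjoined generator to turn $L$ into a triangle factor; your intuition that larger $p$ means ``denser constraints'' therefore points in exactly the wrong direction, and no amount of stability analysis can repair a step whose conclusion is false.
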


To support this, they showed that this is the case for type \ri(5) groups. Recently, a related question was raised by Hassler and Treglown \cite{hassler2025notessumfreesetsAbelian}. In light of no examples of intermediate growth between $2^{1/2}$ and $3^{1/3}$, they asked the following question.

\begin{question}[\cite{hassler2025notessumfreesetsAbelian}]\label{qs:n*}
    Given an Abelian group $G$ of order $n$, is it true that either $f_{\max}(G) = 3^{\mu(G)/3 + o(n)}$ or $f_{\max}(G) \leq 2^{\mu(G)/2 + o(n)}$?
\end{question}

Our next result provides explicit constructions of infinitely many groups with intermediate growth in the number of maximal sum-free sets. This gives negative answers to both~\cref{conj:typeI-few} and~\cref{qs:n*}. 

\begin{theorem}\label{thm:growth}
    Let $c=10^{-4}$. There are infinitely many Abelian groups $G$ of type I such that 
    \[
    2^{(\frac{1}{2}+c)\mu(G)}\leq f_{\max}(G) \leq 3^{(\frac{1}{3}-c)\mu(G)}.
    \]
\end{theorem}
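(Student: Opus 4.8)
The plan is to exhibit an explicit infinite family of type I groups and analyze $f_{\max}(G)$ from both sides using the Green–Ruzsa container method together with the graph-theoretic bounds on maximal independent sets. A natural candidate is $G = \Z_2^k \oplus \Z_p$ (or $\Z_2^k \oplus \Z_p^r$) for a fixed small prime $p \equiv 2 \pmod 3$, say $p = 5$, so that $G$ is type I($5$) and $\mu(G) = (1/3 + 1/3p)n = \tfrac{2}{5} \cdot 2^k p^{r}$ grows with $k$. The key structural point is that a largest sum-free subset of such a group is (up to symmetry) the preimage under the projection $G \to \Z_p$ of a largest sum-free subset of $\Z_p$, i.e. a union of certain $\Z_2^k$-cosets; maximal sum-free sets should decompose, via a container-type argument, as essentially a choice of a near-extremal sum-free structure in the $\Z_p$-factor, refined by independent-set-type choices coming from the $\Z_2^k$-factor. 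This mixed structure is exactly what should produce a growth rate strictly between $2^{1/2}$ and $3^{1/3}$: the $\Z_p$ part contributes a $3^{1/3}$-type flavor on a $(1/p)$-fraction of the support, while the $\Z_2^k$ part contributes a $2^{1/2}$-type flavor on the rest.

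For the \textbf{lower bound} $f_{\max}(G) \ge 2^{(1/2+c)\mu(G)}$, I would directly construct many maximal sum-free sets: start from the ``standard'' extremal sum-free set $S_0$ (the preimage of the middle interval of $\Z_p$), and show that one can independently perturb the configuration on many disjoint $\Z_2^k$-cosets — e.g. inside certain cosets one has a free choice among $\gtrsim 2^{2^{k}/2}$ maximal-independent-set-like configurations of a suitable Cayley-type graph, while still on a positive fraction of cosets one can also exploit the larger number ($3$-ary rather than $2$-ary) of admissible ``middle'' structures inherited from $\Z_p$. Multiplying the independent choices gives a count of the form $2^{\mu(G)/2} \cdot (3/2^{\text{something}})^{n/p}$, and choosing $p$ and the family correctly makes the exponent at least $(1/2 + 10^{-4})\mu(G)$. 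The main work here is to verify that the perturbed sets are genuinely \emph{maximal} sum-free (not just sum-free), which requires checking that no element outside can be added — a finite local computation in the fixed factor $\Z_p$ combined with the combinatorics of the $\Z_2^k$-cosets.

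For the \textbf{upper bound} $f_{\max}(G) \le 3^{(1/3-c)\mu(G)}$, I would run the Green–Ruzsa container theorem: every sum-free set lies in one of at most $2^{o(n)}$ containers, each of which is ``almost sum-free'' and hence has a restricted structure — essentially each container is close to a union of $\Z_2^k$-cosets indexed by an almost-sum-free subset of $\Z_p$. Within a fixed container $C$, maximal sum-free subsets of $G$ contained in $C$ correspond to maximal independent sets in an auxiliary graph (the Schur-triple link graph on $C$), and here I invoke the paper's sharp bound interpolating Moon–Moser and Hujter–Tuza: since the relevant graph has a large matching number (forced by the coset structure and the fact that $G \ne \Z_2^k$, so $p \ge 3$ contributes many Schur triples), the count of maximal independent sets is at most $2^{(1/2 - c)|C|}$-ish, and summing over the $2^{o(n)}$ containers and bounding $|C| \le (1+o(1))\mu(G)\cdot\frac{3}{2}$ or the analogous expression yields the $3^{(1/3-c)\mu(G)}$ bound after optimizing constants. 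The \textbf{main obstacle} I expect is the upper bound's endgame: one must show the auxiliary graph on \emph{every} near-extremal container has matching number bounded \emph{away from} $0$ by a definite fraction of its vertex set (so that the interpolation theorem gives a genuine exponential saving over $3^{1/3}$), and simultaneously that it is not so large as to push below $2^{1/2}$ — calibrating these two competing effects across all containers, and choosing the group family so the net growth lands strictly inside $(2^{1/2}, 3^{1/3})$ with room for $c = 10^{-4}$, is the delicate part.
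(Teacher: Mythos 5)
You have a genuine gap: the proposed family of groups is wrong, and the lower bound you want is provably false for it. Take $G = \Z_2^k \oplus \Z_5$ with $k$ growing, as you suggest. Then $|G|$ is even, so the smallest prime $\equiv 2\ (\mathrm{mod}\ 3)$ dividing $|G|$ is $2$; thus $G$ is type I($2$), not type I($5$), and $\mu(G) = n/2$, not $2n/5$. Moreover, \cref{thm:even} of this very paper shows that every even-order $G \neq \Z_2^k$ satisfies $f_{\max}(G) \leq 2^{(1/2-c)\mu(G)}$, so the desired lower bound $f_{\max}(G) \geq 2^{(1/2+c)\mu(G)}$ cannot hold for your family. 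If instead you drop the $\Z_2^k$ factor and take odd groups like $\Z_5^r$, this genuinely is type I($5$), but Liu and Sharifzadeh showed (as cited in the introduction) that type I($5$) groups also satisfy $f_{\max}(G) \leq 2^{(1/2-c)\mu(G)}$.

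The conceptual error behind this choice is the claim that ``the $\Z_p$ part contributes a $3^{1/3}$-type flavor.'' In the paper's construction the ternary freedom comes not from the $\Z_p$ factor but from an explicit $\Z_3$ factor in the group. The family actually used is $G = \Z_3^2 \oplus \Z_{23} \oplus \Z_p$ for primes $p \geq 29$, which is type I($23$) with $\mu(G) = \tfrac{8n}{23}$. The lower-bound construction sets $s = (0,1,0,0)$ and, for each of the $3^{n/9}$ functions $f\colon \Z_{23}\oplus\Z_p \to \Z_3$, takes $A_f = \{(1, f(x), x) : x \in \Z_{23}\oplus\Z_p\} \cup \{s\}$; distinct $f$'s force distinct maximal sum-free completions, giving $f_{\max}(G) \geq 3^{n/9} = 3^{(23/72)\mu(G)}$. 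The prime $23$ is precisely the smallest for which $\tfrac{23}{72}\log 3 > \tfrac{1}{2}\log 2$, i.e.\ for which this count exceeds $2^{\mu(G)/2}$; repeating the same construction with $p \in \{5,11,17\}$ in place of $23$ (using, say, $\Z_3^2\oplus\Z_p\oplus\Z_q$) produces strictly fewer than $2^{\mu(G)/2}$ sets, consistent with the known upper bounds for type I($5$). Your outline of the upper bound (containers, stability, passing to the Schur-triple link graph, and invoking the matching-number MIS bound from \cref{thm:matching}) does match the paper's method and is not where the difficulty lies; the delicate point is choosing the group family so that a lower bound exceeding $2^{\mu(G)/2}$ is even possible, and that requires both a $\Z_3$ factor and a type-determining prime at least $23$.
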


In \cite{hassler2025notessumfreesetsAbelian}, Hassler and Treglown initiated the study of distinct sum-free subsets of Abelian groups. A triple $\{x,y,z\}$ is a {\it distinct Schur triple} if $x + y = z$ and $x,y,z$ are distinct. A set $S \subseteq G$ is {\it distinct sum-free} if $S$ contains no distinct Schur triple. Denote by $\mu^*(G)$ the maximum size of distinct sum-free subsets of $G$, by $f^*(G)$ the number of distinct sum-free sets of $G$, and by $f^*_{\max}(G)$ the number of maximal distinct sum-free sets of $G$.

Hassler and Treglown~\cite{2022MaximalHassler} observed that using the removal lemma of Green \cite{2005A}, one has $\mu(G) \leq \mu^*(G) \leq \mu(G)+o(n)$ and $f(G) \leq f^*(G) \leq 2^{o(n)}\cdot f(G)$. By a method similar to   the proof of \cite{2018maxsumfree}[Proposition 5.1], one also knows that $f^*_{\max}(G) \leq 3^{\mu^*(G)/3+o(n)} = 3^{\mu(G)/3+o(n)}$. Hassler and Treg\-lown~\cite{hassler2025notessumfreesetsAbelian} conjectured that $f^*_{\max}$ of all type \ri\ Abelian groups have growth $2^{1/2}$.

\begin{conjecture}[\cite{hassler2025notessumfreesetsAbelian}]\label{conj:*}
    For all type \ri\ groups $G$, $f^*_{\max}(G) = 2^{(1/2+o(1))\mu(G)}$.
\end{conjecture}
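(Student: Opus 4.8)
Rather than prove \cref{conj:*} as stated --- which I expect to be false --- I would aim for the following refinement, which disproves it: for every even-order Abelian group $G$ (all of which are type \ri($2$), so $\mu(G)=n/2$), $f^*_{\max}(G)=2^{(1/2+o(1))\mu(G)}$, with the single exception $G=\Z_2^k\oplus\Z_3$, for which $f^*_{\max}(G)=3^{(1/3+o(1))\mu(G)}$. The set-up is favourable: the normalization is unchanged ($\mu^*(G)=\mu(G)+o(n)$ by Green's removal lemma) and $f^*_{\max}(G)\le 3^{\mu(G)/3+o(n)}$ is already known, so the task is purely to locate the growth rate between $2^{1/2}$ and $3^{1/3}$. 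The plan is to rerun the container-method proof of \cref{thm:even} in the distinct setting, tracking where \emph{distinctness} changes the combinatorics.

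There are two stages. First, the container reduction: Green's removal lemma turns a distinct sum-free $S$ into an honest sum-free set after deleting $o(n)$ elements, and the Green--Ruzsa container theorem then places every distinct sum-free set inside one of $2^{o(n)}$ almost-sum-free containers of size at most $\mu(G)+o(n)$, whose structure for the type \ri($2$) group $G$ is controlled by the known stability results. Second, inside a fixed container $C'$: since $C'$ is almost sum-free, the $3$-uniform hypergraph of \emph{distinct} Schur triples inside $C'$ is almost linear, so deleting an exceptional core of $o(n)$ vertices leaves essentially a graph $\Gamma_{C'}$ on the remaining vertices whose independent sets are exactly the distinct sum-free subsets avoiding the core; counting maximal distinct sum-free subsets of $C'$ then reduces, up to a $2^{o(n)}$ factor, to counting maximal independent sets of $\Gamma_{C'}$. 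Now the paper's sharp bound on the number of maximal independent sets in terms of matching number does the work: if every relevant $\Gamma_{C'}$ has matching number $(1-o(1))|C'|/2$ one obtains $f^*_{\max}(G)\le 2^{(1/2+o(1))\mu(G)}$ (via the bound that an $N$-vertex graph with a perfect matching has at most $2^{N/2}$ maximal independent sets), whereas a $\Gamma_{C'}$ that degenerates towards $\sim\mu(G)/3$ disjoint triangles only gives the universal bound $3^{(1/3+o(1))\mu(G)}$.

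The decisive point is which edges of $\Gamma_{C'}$ survive. Passing to \emph{distinct} Schur triples discards every ``degenerate'' relation $x+x=2x$ and $0+x=x$ and keeps $x+y=z$ only when $x,y,z$ are pairwise distinct --- and it is precisely these degenerate relations which, in the proof of \cref{thm:even}, keep the link graph of every even $G\ne\Z_2^k$ with matching number exceeding half its order, hence the growth strictly below $2^{1/2}$. For $\Z_2^k\oplus\Z_3$ the deletion is so drastic --- the $\Z_3$-layers carry no distinct Schur triples of their own, since the relation $1+1=2$ in $\Z_3$ is degenerate --- that one can exhibit a near-maximum configuration whose surviving distinct Schur triples form essentially $\sim\mu(G)/3$ disjoint triangles; reversing the reduction, its transversals give $3^{(1/3-o(1))\mu(G)}$ maximal distinct sum-free sets, matching the universal upper bound. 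The same cannot be arranged in $\Z_2^k\oplus\Z_9$ or $\Z_2^k\oplus\Z_3^j$ with $j\ge2$, where the extra $3$-torsion either supplies genuine distinct Schur triples inside the layers or forces distinct triples to overlap, so these groups stay at $2^{1/2}$. Conversely, for every even $G\ne\Z_2^k\oplus\Z_3$ one also needs a matching lower bound: a family of $(1-o(1))\mu(G)/2$ ``switches'' --- pairwise non-interacting distinct Schur triples sharing a common fixed part --- inside a near-maximum distinct sum-free configuration, whose independent transversals are all maximal, yielding $2^{(1/2-o(1))\mu(G)}$ of them.

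I expect this last lower bound to be the main obstacle. For \emph{every} even group other than $\Z_2^k$ and $\Z_2^k\oplus\Z_3$, \cref{thm:even} already tells us that the ordinary count $f_{\max}(G)$ is exponentially below $2^{\mu(G)/2}$, so the distinct relaxation must genuinely manufacture new maximal sets, and the switching gadget has to be designed case by case from the $2$- and $3$-torsion of $G$: for instance, adjoining the identity $0$ turns each pair $\{x,-x\}$ with $x$ of order $>2$ into a switch, since $\{0,x,-x\}$ is a distinct Schur triple while $x$ and $-x$ are otherwise unconstrained, and when $G$ has large $2$-torsion one instead extracts switches from the interaction between a maximum sum-free coset and its complement, recursively. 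The delicate point is then verifying that the transversals produced are distinct sum-free \emph{and}, more subtly, maximal --- i.e.\ that the gadget saturates $G$. A secondary obstacle is proving uniqueness of the exception: ruling out near-miss families such as $\Z_4\oplus\Z_3$, $\Z_2^k\oplus\Z_9$ and $\Z_2^k\oplus\Z_3^2$ requires showing, in each case, that no container's surviving-triple hypergraph degenerates to $\sim\mu(G)/3$ disjoint triangles, i.e.\ that its matching number stays close to half its order.
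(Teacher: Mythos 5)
Your high-level plan --- container reduction, pass to the link graph $\Gamma^*=L^*_S[A]$, bound $\mis(\Gamma^*)$ via matching number, and single out $\Z_2^k\oplus\Z_3$ as the unique even-order exception --- is exactly the architecture of the paper's Theorem~\ref{thm:even*}/Theorem~\ref{thm:even-distinct}, and the container/Green--Ruzsa stability set-up is used in the same way. But there is a concrete gap in the MIS-counting step: the vanilla matching-number bound (Theorem~\ref{thm:matching}) does \emph{not} suffice, because for several of the relevant link graphs the matching number is bounded away from $|A|/2$ by a constant factor, not by $o(1)$. Two instances: when all elements of $S$ have order $3$ and $G\ne\Z_2^k\oplus\Z_3$, $\Gamma^*$ can decompose into $\approx n/12$ many $C_3$-components and some $T_3$-components, giving $\nu\approx 5|A|/12$; plugging this into Theorem~\ref{thm:matching} yields $2^{|A|/4}3^{|A|/6}$, which exceeds $2^{|A|/2}$ since $9>8$. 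Similarly, when $S\not\subseteq\{s,-s\}$ and the blocks $V_x$ have bounded odd size $\ell$, one only gets $\nu=(1/2-1/2\ell)|A|$, and the matching bound returns $(9/8)^{|A|/2\ell}\cdot 2^{|A|/2}$, again too large by an exponential factor. The paper patches precisely these holes with stability refinements of Theorem~\ref{thm:matching} --- Lemma~\ref{lem:degree4} (exploiting that matched vertices have degree $\ge4$ in their block, gaining a factor $(59/64)^k$) and Lemma~\ref{lem:tl} (exploiting the $T_\ell$-structure, gaining $(31/32)^k$) --- together with an algebraic count of the solutions of $2y=0$ (Observation~\ref{observation:solutions}) to cap the number of $C_3$-components. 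Without these strengthenings your ``matching number close to half'' hypothesis simply fails.

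Two lesser points. You flag the lower bound $f^*_{\max}(G)\ge 2^{(1/2-o(1))\mu(G)}$ for even $G$ as ``the main obstacle'' and sketch a switching gadget for it; this is already known from Hassler--Treglown ($f^*_{\max}(G)\ge 2^{(n-2)/4}$) and the paper simply cites it, so the effort is misplaced. And your disproof of Conjecture~\ref{conj:*} only produces the single exception $\Z_2^k\oplus\Z_3$; the paper additionally exhibits an infinite family of \emph{odd}-order type \ri($p$) counterexamples for every $p\ge 23$ with $p\equiv 2\pmod 3$ (Proposition~\ref{prop:counter2}, $G=\Z_3^2\oplus\Z_p$, via the $3^p$-many sets $A_f=\{(1,f(x),x):x\in\Z_p\}\cup\{(0,1,0)\}$), so restricting attention to even order misses most of the counterexamples --- still enough to refute the conjecture, but a strictly weaker result than Theorem~\ref{thm:even-distinct}.
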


They examined even-order groups in depth, giving a construction to prove the lower bound $f^*_{\max}(G) \geq 2^{(n-2)/4}$. For the upper bound, they showed the validity of their conjecture for almost all even-order groups.

We disprove~\cref{conj:*} with counterexamples of type \ri($p$)  for $p=2$ and every $p\ge 23$. We show that a relaxed version of the conjecture is true, that is, it is true for all but one  even-order group.

\begin{theorem}\label{thm:even-distinct}
    For every even-order group $G$, 
    \[
    f_{\max}^*(G) = 2^{(1/2+o(1))\mu(G)},
    \]
    with the only exception $f_{\max}^*(\Z_2^k \oplus \Z_3) = 3^{(1+o(1))\mu(G)/3}$.

    For every $p\ge 23$ with $p \equiv 2\ (\text{mod}\ 3)$, there are type \ri($p$) groups with $f_{\max}^*(G)$ exponentially larger than $2^{\mu(G)/2}$.
\end{theorem}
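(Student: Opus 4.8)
The plan is to reduce the counting problem to a graph-theoretic one using the Green–Ruzsa container theorem, exactly as in the approach that yields the $3^{\mu(G)/3+o(n)}$ upper bound, but keeping careful track of the matching structure. For the main (upper bound) statement, I would fix a container family $\mathcal{C}$ of sum-free sets with $|\mathcal{C}| = 2^{o(n)}$ such that every maximal distinct sum-free set $S$ is contained in some $C \in \mathcal{C}$, and such that each $C$ is ``almost'' a union of cosets of an index-$2$ (or index-$3$) subgroup — concretely, $C$ is obtained from one of the extremal sum-free sets by adding $o(n)$ elements. Given such a $C$, build an auxiliary graph $H_C$ on the ground set $C$ whose edges encode the distinct Schur triples that can still be violated: since $S$ is distinct sum-free and maximal inside $C$, $S$ must be a \emph{maximal independent set} of $H_C$ (or of an associated hypergraph that, by the structure of $C$, degenerates to a graph up to $2^{o(n)}$ error). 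Then $f^*_{\max}(G) \le 2^{o(n)} \cdot \max_C \MIS(H_C)$, and the task becomes bounding $\MIS(H_C)$.

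The key point is that for an even-order group $G \ne \Z_2^k$, the relevant container graphs $H_C$ contain a near-perfect matching coming from the $2$-torsion structure — for each nonzero element $x$ with $2x \ne 0$ appearing in the ``doubled coset'' part of $C$, the pair $\{x, 2x\}$ forms an edge, and these pairs are vertex-disjoint. Applying our sharp MIS-versus-matching bound (the interpolation between Moon–Moser and Hujter–Tuza stated in the excerpt), a graph on $m$ vertices with matching number $\nu$ has at most roughly $2^{\nu}\cdot(\text{small})^{m-2\nu}$ maximal independent sets; when the matching covers a $(1-o(1))$-fraction of the $\sim \mu(G)$ relevant vertices this gives $2^{(1/2+o(1))\mu(G)}$, and the matching-number-$\ge n/2$ special case (every $n$-vertex graph with a perfect matching has $\le 2^{n/2}$ MIS) handles the boundary cleanly. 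The matching lower bound is automatic from the group structure whenever the $2$-torsion subgroup is not everything, i.e., whenever $G \ne \Z_2^k$. For the exceptional group $\Z_2^k \oplus \Z_3$: here the extremal sum-free set is a union of two out of three cosets of the index-$3$ subgroup $\Z_2^k$, the container graph is essentially a disjoint union of triangles (triples $\{x, x+g, x+2g\}$ with $g$ a generator of the $\Z_3$ factor), which has $3^{(1+o(1))\mu(G)/3}$ maximal independent sets by Moon–Moser, and the matching number of a triangle-factor is only $2/3$ of the vertex count — so the MIS bound does \emph{not} beat $2^{1/2}$ there, and a matching construction (taking one vertex from each triangle freely) realizes the lower bound. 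The matching $\Z_2^k$ case is excluded from the theorem because there $\mu(G) = n - $ (something small) and the count is genuinely $2^{(1/2+o(1))\mu(G)}$, consistent with the statement.

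For the second part — type \ri($p$) groups with $f^*_{\max}(G)$ exponentially larger than $2^{\mu(G)/2}$ for every $p \ge 23$, $p \equiv 2 \pmod 3$ — I would give an explicit construction, presumably $G = \Z_p \oplus \Z_2^k$ (or $\Z_p$ times a suitable group) for large $k$, and exhibit a large family of maximal distinct sum-free sets. The idea is that the extremal sum-free set of a type \ri($p$) group lives on roughly $(1/3 + 1/(3p))n$ elements structured as a union of an arithmetic-progression-like block in the $\Z_p$ direction times the whole complementary group, and one can independently choose, in each of $\Theta(n/p)$ ``free'' coordinate slices, one of roughly $p^{1/3}$-many locally-maximal distinct sum-free configurations (exploiting that $\Z_p$ itself has $f^*_{\max}(\Z_p)$ of growth close to $3^{1/3}$ for $p \equiv 2 \pmod 3$), giving a total of $(3^{1/3})^{(1-o(1))\cdot(\text{size})} = 3^{(1/3 - o(1))\mu(G)}$ many sets — and for $p \ge 23$ the arithmetic works out so that this exceeds $2^{(1/2+\epsilon)\mu(G)}$, since $(1/3+1/(3p))\log_2 3 > 1/2$ precisely when $p$ is large enough, with $23$ being the threshold. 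The main obstacle, and the step requiring genuine care, is verifying that these locally-constructed sets are actually \emph{globally maximal} distinct sum-free — one must check that adding any element outside creates a distinct Schur triple, which requires the block structure in the $\Z_p$ direction to be chosen so that it saturates all cross-coset distinct sums; this is where the hypothesis $p \ge 23$ (rather than merely $p \equiv 2 \pmod 3$, which would include the problematic small cases $p = 2, 5, 11$ where the construction either fails maximality or fails to beat $2^{1/2}$) is used. I expect the maximality verification and the precise determination of the threshold $p = 23$ to be the technically delicate heart of this half of the theorem, while the upper-bound half reduces rather cleanly to the MIS–matching inequality once the container reduction is set up.
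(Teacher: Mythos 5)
Your high-level plan — container theorem to reduce to a link graph, then the matching/MIS interpolation for the upper bound, plus an explicit construction for the lower bound — is the right shape, but there are two substantive gaps.

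For the upper bound, you assert that the relevant container graphs ``contain a near-perfect matching coming from the $2$-torsion structure'' whenever $G \ne \Z_2^k$, treating this as automatic. It is not. The link graph $L^*_S[A]$ depends on the bonus set $S$, and its matching structure depends entirely on $S$, not just on $G$. A perfect matching is available essentially only when $S$ contains an element $s$ of order $2$ (matching $x$ to $x+s$); this is exactly one case. When every $s\in S$ has order $\ge 3$, there is no such matching, and the paper must instead analyze the component structure directly: for $S\subseteq\{s,-s\}$ the components are cycles $C_{|s|}$ and prisms $T_{|s|}$, $T^+_{|s|}$, which need Lemma~\ref{lem:tl} and, for $|s|=3$, a count of $C_3$ components via Observation~\ref{observation:solutions} (this is precisely where $G\ne\Z_2^k\oplus\Z_3$ enters); and for $S\not\subseteq\{s,-s\}$ one gets minimum degree $\ge 4$ on matched vertices and needs Lemma~\ref{lem:degree4}. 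Without these cases the argument does not close, and the stated exceptional group would not be correctly isolated. Your explanation of the $\Z_2^k\oplus\Z_3$ exception (``two out of three cosets of the index-$3$ subgroup'') is also misaligned with what happens: $A$ is still a coset of an index-$2$ subgroup, and the triangle factor comes from an order-$3$ element of $S$, not from the container's coset structure.

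For the second part, your proposed group $\Z_p\oplus\Z_2^k$ is not type \ri($p$): it has even order and hence is type \ri($2$), which disqualifies it from the statement being proved. The group must be of odd order with $p$ the smallest prime $\equiv 2\pmod 3$ dividing it; the paper uses $G=\Z_3^2\oplus\Z_p$. Moreover your envisioned construction (independently choosing $\approx p^{1/3}$ locally-maximal configurations per slice, exploiting $f^*_{\max}(\Z_p)\approx 3^{p/3}$) is far more delicate than needed and your sketched threshold calculation $\bigl(\tfrac13+\tfrac1{3p}\bigr)\log_2 3 > \tfrac12$ is false for all $p$ and does not isolate $p\ge 23$. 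The actual construction is much simpler: fix $s=(0,1,0)$ and for each $f\colon\Z_p\to\Z_3$ take $A_f=\{(1,f(x),x):x\in\Z_p\}\cup\{s\}$; each is distinct sum-free and no two extend to the same maximal distinct sum-free set, giving $f^*_{\max}(G)\ge 3^p$, and $3^p\ge 2^{\mu(G)/2}=2^{3(p+1)/2}$ first holds for $p\ge 23$ among primes $\equiv 2\pmod 3$. The maximality verification you flag as the technically delicate heart is, in this construction, nearly trivial; the genuine work in the theorem is the upper-bound case analysis you largely skipped.
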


A key tool in our proofs is an optimal bound on the number of maximal independent sets in graphs with given matching number. Denote by $\MIS(\Gamma)$ the family of maximal independent sets in a graph $\Gamma$, and $\mis(\Gamma)=|\MIS(\Gamma)|$. Moon and Moser~\cite{1965On} proved that for every $n$-vertex graph $\Gamma$, $\mis(\Gamma) \leq 3^{n/3}$, with equality if and only if $\Gamma$ is the vertex disjoint union of $n/3$ triangles. Upper bounds on $\mis(\Gamma)$ have been provided for various classes of graphs, such as connected graphs~\cite{1987MISFuredi, 1988MISGriggs} and trees~\cite{1988MISSagan, 1986MISWilf}.
An important extension is due to 
 Hujter and Tuza~\cite{1993HujterTuza}. They proved that for every $n$-vertex triangle-free graph $\Gamma$, $\mis(\Gamma) \leq 2^{n/2}$,  
with equality if and only if $\Gamma$ is the vertex disjoint union of $n/2$ edges.

   Recently, Kahn and Park~\cite{kahn2020stability} proved  stability versions of results of Moon-Moser and Hujter-Tuza. They improved the exponents by $\Omega_{\delta}(n)$ in case $\Gamma$ contains at most $(1/3-\delta)n$ vertex disjoint triangles with no edges between the triangles (i.e., the {\it induced triangle matching} number is at most $(1/3-\delta)n$), and in case when $\Gamma$ is triangle-free and the induced matching number is at most  $(1/2-\delta)n$,  for some small constant $\delta>0$. Kahn and Park's results were subsequently strengthened by Palmer and Patk\'os~\cite{2023MISPalmer}. In particular, they proved that if an $n$-vertex graph does not contain an induced triangle matching of size $t+1$, then it has at most $3^t\cdot 2^{m/2}$ maximal independent sets, where $0\le t\le n/3$ and $m=n-3t$ is even.

In \cite{hassler2025notessumfreesetsAbelian}, Hassler and Treglown proposed the following conjecture.

\begin{conjecture}[\cite{hassler2025notessumfreesetsAbelian}]\label{conj:mis}
    Let $\Gamma$ be an $n$-vertex graph that contains a perfect matching. Then
    \[
    \mis(\Gamma) \leq 2^{n/2}.
    \]
\end{conjecture}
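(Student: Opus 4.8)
We plan to derive \cref{conj:mis} from a sharper estimate supplied by our main graph-theoretic result: for every $n$-vertex graph $\Gamma$ whose matching number $\nu=\nu(\Gamma)$ lies in the range $n/3\le\nu\le n/2$,
\[
\mis(\Gamma)\ \le\ 3^{\,n-2\nu}\cdot 2^{\,3\nu-n}.
\]
Writing $d=n-2\nu$ for the deficiency of a maximum matching, the right-hand side equals $2^{n/2}(3/2^{3/2})^{d}$; it is increasing in $d$, equals $3^{n/3}$ at $\nu=n/3$ (recovering Moon--Moser) and equals $2^{n/2}$ at $\nu=n/2$, which is exactly \cref{conj:mis} since a perfect matching means $d=0$. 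The bound is attained by the disjoint unions of $d$ triangles and $(3\nu-n)$ edges, hence it is sharp throughout the range, and in particular sharp for graphs with a perfect matching (for instance $n/2$ disjoint edges, or $n/4$ disjoint copies of $K_4$). We would prove the displayed inequality by induction on $n$, keeping the Moon--Moser bound $\mis(\Gamma)\le 3^{n/3}$ at hand as a fallback whenever a deletion drops us into the regime $\nu<n/3$.

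For the inductive step, fix a maximum matching $M$ and split on the minimum degree. If $\Gamma$ has an isolated vertex $v$, then $v$ is in every maximal independent set, so $\mis(\Gamma)=\mis(\Gamma-v)$; deleting $v$ lowers the deficiency by one and the target is increasing in $d$, so the step closes comfortably. If $\Gamma$ has a degree-$1$ vertex $u$ with neighbour $w$, then every maximal independent set contains exactly one of $u,w$, so
\[
\mis(\Gamma)\ \le\ \mis(\Gamma-\{u,w\})+\mis(\Gamma-N[w]).
\]
Choosing $uw\in M$, the graph $\Gamma-\{u,w\}$ has $n-2$ vertices and deficiency $\le d$, so contributes at most half the target; the term $\mis(\Gamma-N[w])$ is controlled by the inductive hypothesis — bounding $\nu(\Gamma-N[w])$ through the number of $M$-edges meeting $N[w]$ — or, when $\deg w$ is large so that $\Gamma-N[w]$ is small, by Moon--Moser. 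If $\delta(\Gamma)\ge2$ and $\Gamma$ is triangle-free, then Hujter--Tuza gives $\mis(\Gamma)\le 2^{n/2}\le 3^{n-2\nu}2^{3\nu-n}$ and we are done.

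The substantial case is $\delta(\Gamma)\ge2$ with a triangle $T=\{x,y,z\}$ present. Counting maximal independent sets by their at-most-one vertex in the clique $T$,
\[
\mis(\Gamma)\ \le\ \mis(\Gamma-N[x])+\mis(\Gamma-N[y])+\mis(\Gamma-N[z])+\mis(\Gamma-T),
\]
where the last term counts only maximal independent sets disjoint from $T$ and thus vanishes unless every vertex of $T$ has a neighbour outside $T$. We would first strip off any connected component equal to a triangle or to $K_4$: there $\mis(\Gamma)$ factors exactly as $3\,\mis(\Gamma-T)$ (triangle) or $4\,\mis(\Gamma-T)$ ($K_4$), and the recursion meets the target with equality — note that the generic four-term bound above would overshoot on these components, which is exactly why they must be peeled off separately. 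Otherwise we pick $T$ with a vertex having a neighbour outside $T$ (forcing slack into the recursion, and in particular making the $\mis(\Gamma-T)$ term small), bound each remaining term by the inductive hypothesis using that deleting $N[v]$ (at least $3$ vertices) or $T$ ($3$ vertices) destroys only a few edges of $M$, and verify that the four contributions sum to at most $3^{n-2\nu}2^{3\nu-n}$.

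The main obstacle is exactly this last case. Since the target is tight on the entire family of disjoint unions of edges, triangles, and $K_4$'s, the triangle recursion carries almost no slack, so the case analysis has to be essentially exact. The pinch points are overlapping triangles — diamonds $K_4-e$, books, and $K_4$'s attached to the rest of the graph — and degree-$2$ vertices lying in a triangle whose two neighbours are non-adjacent; there the crude count ``deleting $k$ vertices destroys at most $k$ edges of $M$'' is too wasteful, and one must examine how $M$ actually meets $N[v]$ and $T$ to obtain sharp lower bounds on $\nu(\Gamma-N[v])$ and $\nu(\Gamma-T)$. Aligning the choice of which triangle (or low-degree vertex) to branch on with these matching-number bookkeeping lemmas, uniformly over all sub-configurations, is the technical heart of the argument.
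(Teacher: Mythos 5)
Your proposal targets the same bound as the second part of the paper's Theorem~\ref{thm:matching} (proved as Theorem~\ref{thm:biggermatchingnumber}), namely $\mis(\Gamma)\le 2^{3m-n}3^{n-2m}$ for $\nu(\Gamma)=m$, and correctly observes that the case $m=n/2$ gives Conjecture~\ref{conj:mis}. The preliminary cases (isolated vertex, degree-one vertex, triangle-free via Hujter--Tuza) are fine. But the case you yourself flag as ``the technical heart of the argument'' --- $\delta(\Gamma)\ge 2$ with a triangle present --- is a genuine gap, not a routine verification. Two warning signs: first, your list of equality graphs omits $D_6$ (two disjoint triangles joined by an edge), which the paper identifies as a fourth extremal component. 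On a $D_6$ component your triangle recursion is met with equality even after picking a triangle with a vertex having an external neighbour, so ``forcing slack into the recursion'' by that choice alone does not work; $D_6$-like configurations would need to be peeled off separately, and without the full list of tight configurations in hand the shape of the case analysis is unclear. Second, your recursion deletes $N[x]$ for $x$ a triangle vertex, but a triangle vertex need not lie in $V(M)$, so the bookkeeping ``how many $M$-edges meet $N[x]$'' bifurcates further; you acknowledge this must be handled ``uniformly over all sub-configurations'' but do not do so.

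The paper proves the bound by a genuinely different decomposition that sidesteps these pinch points: it takes a minimal counterexample, fixes a maximum matching $M$, picks an $M$-edge $uv$ with $\deg u\le\deg v$, and applies Lemma~\ref{lem:basic-mis-ineq}(v),
\[
\mis(\Gamma)\le \mis(\Gamma-u-N(u))+\mis(\Gamma-v-N(v))+\mis(\Gamma-v;\overline u),
\]
together with two quantitative facts: $\nu(\Gamma-u-v)=m-1$ exactly (so the last term is at most $\tfrac12 a(m,n)$), and for $x\in V(M)$ of degree $d$, $\mis(\Gamma-x-N(x))\le 3^{d-1}2^{1-2d}a(m,n)$ (Claim~\ref{cl:deletevertex}). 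Because both branching vertices are endpoints of an $M$-edge, the matching-number accounting is automatic, and the proof closes with a finite case analysis on $(\deg u,\deg v)$ plus a little local structure (Cases 1--7). Branching on an $M$-edge rather than a triangle is exactly what makes the argument tractable. Your triangle-based plan is not obviously wrong, but as written it stops where the difficulty begins, and the paper's matching-edge decomposition is both the idea you are missing and a cleaner route to the result.
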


We resolve this conjecture in a stronger form by establishing a thorough relationship between the matching number and the number of maximal independent sets. We denote by $D_6$ the graph which consists of two vertex disjoint triangles joined by an edge.

\begin{theorem}\label{thm:matching}
    Let $\Gamma$ be an $n$-vertex graph with matching number $\nu(\Gamma)=m$.
    \begin{itemize}
        \item[(i)] If $m \leq \frac{n}{3}$, then
$$\mis(\Gamma) \leq  3^m.$$
Equality holds if and only if each component of $\Gamma$ is a $K_3$ or an isolated vertex.

        \item[(ii)] If $m > \frac{n}{3}$, then
$$\mis(\Gamma) \leq 2^{3m-n}\cdot 3^{n-2m}.$$
Equality holds if and only if each component of $\Gamma$ is a $K_2$, $K_3$, $K_4$ or $D_6$.
    \end{itemize}
 \end{theorem}
We remark that both bounds in~\cref{thm:matching} are valid for all values of $m$. Observe, that by the Moon-Moser~\cite{1969Maximal} and Hujter-Tuza~\cite{1993HujterTuza} Theorems, the appearance of $K_2$ and $K_3$ is expected in the extremal examples. For $K_4$ and $D_6$, observe that $6K_2$, $3K_4$ and $2D_6$ all contribute the same, $12$, to the number of vertices, and each has a perfect matching, and each has $64$ maximal independent sets. Part (i) of \cref{thm:matching} was proved by Shi, Tu and Wang \cite{shi2025maximalindependentsetsgraphs}, additionally,  they also proved several results on connected graphs with given matching number. For  completeness, we include a different proof.

The bulk of the proof of Theorem~\ref{thm:matching} is to handle part (ii). Note that the special case   $m=n/2$ above verifies~\cref{conj:mis}. To remember~\cref{thm:matching}, a graph that is helpful to keep in mind is the vertex disjoint union of $t$ triangles and $s$ edges. In this case, $n=2s+3t$ and $m=s+t$, and the number of maximal independent sets is exactly $2^s3^t=2^{3m-n}\cdot 3^{n-2m}$. Thus,~\cref{thm:matching} interpolates smoothly between the classical bounds given by Moon-Moser and Hujter-Tuza. Considering the wide applicability of the Moon-Moser and Hujter-Tuza Theorems, we anticipate that~\cref{thm:matching} will serve as a fundamental tool with applications beyond this work.

Comparing to the result of Palmer and Patk\'os~\cite{2023MISPalmer}, the advantage of~\cref{thm:matching} is that it is often difficult to estimate the \textit{induced} triangle matching number of a graph, whereas getting bounds on the matching number is a more manageable task. For instance, this is how we achieve the upper bound in~\cref{thm:growth}. Indeed, we use $\mis(\Gamma)$ of certain auxiliary graph $\Gamma$ to bound $f_{\max}(G)$, and then we identify a large matching in $\Gamma$ to invoke~\cref{thm:matching}. Also, for some graphs, \cref{thm:matching} gives a stronger bound. For instance, let $\Gamma$ be vertex disjoint union of $n/6$ triangles and $n/8$ $K_4$'s. Then  
$\mis(\Gamma)= 3^{n/6}\cdot 2^{n/4}$, which coincides with our upper bound in \cref{thm:matching}, while the bound by Palmer and Patk\'os gives only $3^{7n/24}\cdot 2^{n/16}\gg 3^{n/6}\cdot 2^{n/4} $, as $ \Gamma$ contains an  induced triangle matching of size $7n/24$.

Finally, we remark that~\cref{thm:matching} as stated does not suffice for all of our applications. For instance, for~\cref{thm:even,thm:even-distinct}, we in fact need several stability versions of~\cref{thm:matching}, which provide further (exponential) improvements when the graph is far away from the extremal examples (see~\cref{lem:degree4,lem:tl,lem:c4}).

\paragraph{Organization.} The rest of the paper is organized as follows. We first prove~\cref{thm:matching} in~\cref{proofthmmat}, then~\cref{thm:growth} in~\cref{sec:growth},~\cref{thm:even-distinct} in~\cref{sec:distinct}, and~\cref{thm:even}  in~\cref{sec:even-order}, respectively.

\section{Maximal independent sets given matching number}\label{proofthmmat}

In this section we shall prove \cref{thm:matching} and some stability extensions (\cref{lem:degree4,lem:tl,lem:c4}) that will be useful later. We split the proof of \cref{thm:matching} into two parts. The first part follows from the statement below.

\begin{theorem}\label{thm:smallermatchingnumber}
    Suppose $\Gamma$ is an $n$-vertex graph with matching number $\nu(\Gamma) = m$. If $\Gamma$ is not a vertex-disjoint union of $K_3$'s and isolated vertices, then $\mis(\Gamma) < 3^m$.
\end{theorem}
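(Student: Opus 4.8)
The plan is to fix a maximum matching $M=\{e_1,\dots,e_m\}$ of $\Gamma$, write $e_i=\{a_i,b_i\}$ and $U=V(\Gamma)\setminus V(M)$ for the set of unmatched vertices, and study the map
\[
\Phi\colon \MIS(\Gamma)\longrightarrow \prod_{i=1}^m\{\emptyset,\{a_i\},\{b_i\}\},\qquad \Phi(I)=\bigl(I\cap\{a_i,b_i\}\bigr)_{i=1}^m,
\]
which is well defined since $I$ is independent and $a_ib_i\in E(\Gamma)$. The first step is to check that $\Phi$ is injective: as $M$ is inclusion‑wise maximal, $U$ is independent, and one verifies $I\cap U = U\setminus N(I\cap V(M))$ for every $I\in\MIS(\Gamma)$ (using that $U$ is independent and $I$ is maximal, so every unmatched vertex with no neighbour in $I\cap V(M)$ must itself lie in $I$); hence $I$ is recovered from $\Phi(I)$. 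Consequently $\mis(\Gamma)=|\operatorname{im}\Phi|\le 3^m$, and \cref{thm:smallermatchingnumber} amounts to showing the bound is strict unless $\Gamma$ is a disjoint union of triangles and isolated vertices. By contraposition it suffices to prove: if $\mis(\Gamma)=3^m$, i.e. $\Phi$ is a bijection, then each component of $\Gamma$ is a $K_3$ or a $K_1$.

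So assume $\Phi$ is surjective and read off the structure of $\Gamma$ by plugging in a few patterns. Testing, for each pair $i\ne j$, the four patterns supported on one endpoint of $e_i$ and one endpoint of $e_j$ (all other coordinates $\emptyset$) produces maximal independent sets containing those two endpoints, so there is no edge of $\Gamma$ between $V(e_i)$ and $V(e_j)$; thus $\Gamma[V(M)]$ is exactly the matching $M$. Testing the all‑$\emptyset$ pattern: any $I\in\MIS(\Gamma)$ with $\Phi(I)$ all empty satisfies $I\subseteq U$, hence $I=U$ since $U$ is independent, so surjectivity gives $U\in\MIS(\Gamma)$; consequently every matched vertex has a neighbour in $U$. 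Now I invoke that $M$ is a \emph{maximum} matching: if $u\in N(a_i)\cap U$ and $u'\in N(b_i)\cap U$ with $u\ne u'$, then $u\,a_i\,b_i\,u'$ is an augmenting path, a contradiction. Combined with the previous sentence this forces $N_\Gamma(a_i)\cap U = N_\Gamma(b_i)\cap U = \{u_i\}$ for a single vertex $u_i\in U$, and since $\Gamma[V(M)]=M$ we get $N_\Gamma(a_i)=\{b_i,u_i\}$ and $N_\Gamma(b_i)=\{a_i,u_i\}$.

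It remains to see that the $u_i$ are distinct and that no other vertex of $U$ matters. For fixed $i$, the pattern with $i$‑th coordinate $\{a_i\}$ and all others $\emptyset$ is realised, by surjectivity and the inversion formula above, by the set $S=\{a_i\}\cup(U\setminus\{u_i\})$; maximality of $S$ means that for each $j\ne i$ the vertex $a_j\notin S$ has a neighbour in $S$, and since $N_\Gamma(a_j)=\{b_j,u_j\}$ with $b_j\notin S$, we must have $u_j\in U\setminus\{u_i\}$, i.e. $u_j\ne u_i$. Hence $u_1,\dots,u_m$ are pairwise distinct, $\{a_i,b_i,u_i\}$ induces a $K_3$ for each $i$, there are no edges among these triangles, and any $w\in U\setminus\{u_1,\dots,u_m\}$ has all its neighbours inside $\bigcup_i\{a_i,b_i\}$ (as $U$ is independent) yet is adjacent to none of them (since $N_\Gamma(a_i)\cup N_\Gamma(b_i)$ meets $U$ only in $u_i\ne w$), so $w$ is isolated. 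Thus $\Gamma$ is a vertex‑disjoint union of $K_3$'s and isolated vertices, completing the contrapositive.

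The crux is the second paragraph: everything hinges on playing surjectivity of $\Phi$ against the absence of short augmenting paths in a maximum matching, and the one delicate point is ruling out several matching edges attaching to a common vertex of $U$ — which is precisely what the one‑point patterns in the third paragraph accomplish. The remaining ingredients (well‑definedness and injectivity of $\Phi$, and the bookkeeping of which independent sets realise which patterns) are routine.
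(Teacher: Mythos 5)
Your proof is correct, and it takes a genuinely different route from the paper. The paper proves the statement by a minimal-counterexample induction: it picks an edge $uv$ in a maximum matching, bounds $\mis(\Gamma-u-N(u))$, $\mis(\Gamma-v-N(v))$ and $\mis(\Gamma-v;\overline u)$ by $3^{m-1}$ using minimality, combines them via the recurrence $\mis(\Gamma)\le\mis(\Gamma-u-N(u))+\mis(\Gamma-v-N(v))+\mis(\Gamma-v;\overline u)$, and then does a degree-based case analysis to find a choice of $uv$ where one of the three bounds is strict. Your argument instead encodes each $I\in\MIS(\Gamma)$ by its trace on the matching edges and observes that the unmatched part is recoverable as $U\setminus N(I\cap V(M))$ because $U$ is independent; this gives the clean injection $\Phi$ and hence the bound $\mis(\Gamma)\le 3^m$ in one line. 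The equality analysis then reads the structure of $\Gamma$ off the surjectivity of $\Phi$: pairs of one-endpoint patterns show $\Gamma[V(M)]=M$; the all-empty pattern shows $U$ is a dominating independent set; Berge's no-augmenting-path criterion collapses $N(a_i)\cap U$ and $N(b_i)\cap U$ to a single $u_i$; and the single-endpoint patterns force the $u_i$ to be distinct. All steps check out (in particular, the inversion formula $I\cap U=U\setminus N(I\cap V(M))$ does hold, using both that $U$ is independent and that $I$ is maximal, and the augmenting-path step is a legitimate use of maximality of $M$). The trade-off is that the paper's inductive scheme is reused, with progressively more intricate case work, to prove the second (and harder) half of Theorem 2.1 as well as the later refinements (Lemmas 4.2, 4.3, 5.1), whereas your bijective encoding is tailored to the $m\le n/3$ regime — it gives a shorter, self-contained proof here, but would need to be redesigned for the second case because the target set $\prod_i\{\emptyset,\{a_i\},\{b_i\}\}$ no longer matches the extremal count $2^{3m-n}3^{n-2m}$ when $m>n/3$.
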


The second part of \cref{thm:matching} follows from the following theorem.

\begin{theorem}\label{thm:biggermatchingnumber}
    Suppose that $\Gamma$ is an $n$-vertex graph with matching number $\nu(\Gamma) = m$. If $\Gamma$ is not a vertex-disjoint union of $K_2$'s, $K_3$'s, $K_4$'s and $D_6'$s, then $\mis(\Gamma) < 2^{3m-n}3^{n-2m}$.
\end{theorem}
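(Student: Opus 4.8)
The plan is to prove \cref{thm:biggermatchingnumber} by strong induction on $n=|V(\Gamma)|$, using as inductive hypothesis the full conclusion of \cref{thm:matching} for all graphs with fewer vertices, and checking the finitely many small graphs by hand. Write $B(\Gamma):=2^{3m-n}3^{n-2m}$ for the target. The argument rests on three simple facts. First, $\mis$ is monotone under deleting vertices: $\mis(\Gamma-S)\le\mis(\Gamma)$ for all $S\subseteq V(\Gamma)$, since the map sending a maximal independent set $J$ of $\Gamma-S$ to $J$ (when $J$ already dominates $\Gamma$) and otherwise to a greedy enlargement of $J$ inside $S$ is an injection into the maximal independent sets of $\Gamma$. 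Second, on the matching side: deleting both endpoints of an edge of a maximum matching decreases $\nu$ by exactly $1$; deleting $\ell$ vertices decreases $\nu$ by at most $\ell$; and $\nu(\Gamma-N[x])\le m-1$ whenever $x$ lies on a matching edge. Third, for a fixed vertex count the function $\nu\mapsto 2^{3\nu-n}3^{n-2\nu}$ is strictly decreasing, and it is at least $3^\nu$ exactly when $\nu\le n/3$; hence the Moon--Moser bound already gives \cref{thm:biggermatchingnumber} with the correct equality case when $\nu\le n/3$, and one may assume $m>n/3$, i.e.\ $3m-n\ge 1$.

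Next come a few reductions, each of which either yields a strict inequality or exhibits $\Gamma$ as a disjoint union of $K_2$'s, $K_3$'s, $K_4$'s and $D_6$'s (which is excluded). If $\Gamma$ is disconnected, apply the hypothesis componentwise and multiply, using that $B$ is multiplicative and that equality would force each component into the extremal family. If $\Gamma$ has an isolated vertex $v$, then $\mis(\Gamma)=\mis(\Gamma-v)$ while $B(\Gamma-v)=\tfrac23B(\Gamma)$, so induction closes it. If $\Gamma$ has a vertex $v$ of degree $1$ with neighbour $w$, the exact identity $\mis(\Gamma)=\mis(\Gamma-\{v,w\})+\mis(\Gamma-N[w])$, together with $\nu(\Gamma-\{v,w\})=m-1$ and a short estimate (separating the $d(w)=1$ case, which produces a $K_2$-component), gives $\mis(\Gamma)\le \tfrac78B(\Gamma)$. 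After these reductions $\Gamma$ is connected, $\delta(\Gamma)\ge 2$, and $\Gamma\notin\{K_3,K_4,D_6\}$.

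The heart of the proof is a case analysis driven by the neighbourhood of a well-chosen vertex or edge, with two branching tools: for a maximum-matching edge $xy$, $\mis(\Gamma)\le\mis(\Gamma-N[x])+\mis(\Gamma-N[y])+\mis(\Gamma-\{x,y\})$ (the three terms count maximal independent sets containing $x$, containing $y$, and avoiding both); and, when $\{a,b,c\}$ spans a triangle with $\deg(a)=2$, the sharper identity $\mis(\Gamma)=\mis(\Gamma-\{a,b,c\})+\mis(\Gamma-N[b])+\mis(\Gamma-N[c])$. Feeding these into the matching facts (to see how $\nu$ drops under each deletion) and into fact three (to convert each resulting $B(\cdot)$ into a fraction of $B(\Gamma)$), most configurations are dispatched with strict inequality: for instance a degree-$2$ vertex inside a triangle whose other two vertices both have degree $\ge 3$ gives $\mis(\Gamma)\le\tfrac{15}{16}B(\Gamma)$. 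The stubborn configurations are precisely those bordering the extremal family. One is a triangle containing a degree-$2$ vertex whose neighbour of maximum degree has degree exactly $3$: chasing the local structure forces $\Gamma$ to be a triangle attached to the rest by a single edge, $\Gamma=\{v,a,b\}$-triangle joined to $\Gamma-\{v,a,b\}$ through an edge $az$, one computes $\mis(\Gamma)=2\mis(\Gamma-\{v,a,b\})+\mis(\Gamma-\{v,a,b,z\})$ with $\nu(\Gamma-\{v,a,b\})\in\{m-1,m-2\}$, and equality turns out to require both residual graphs to be extremal with $z$ sitting in a $K_3$-component, so that a $D_6$ materialises as a component of $\Gamma$, contradicting connectivity. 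The other is $\delta(\Gamma)\ge 3$, where the plain three-term bound only yields roughly $\tfrac{33}{32}B(\Gamma)$; here one exploits that every closed neighbourhood has size $\ge 4$ and branches a second time (isolating the $K_4$'s), again tracking equality to reach the extremal family.

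The main obstacle is exactly this: naive branching overshoots $B(\Gamma)$ by a small constant factor in the densest cases and near the extremal configurations, so the proof must exploit the fine local structure — the $D_6$'s hidden inside triangle-with-a-tail configurations and the $K_4$'s hidden among high-degree vertices — and carry the equality analysis all the way through, showing that equality anywhere in the chain reassembles $\Gamma$ as a disjoint union of $K_2,K_3,K_4,D_6$'s. As a consistency check the resulting bound is tight on such disjoint unions, and its $m=n/2$ instance recovers \cref{conj:mis}.
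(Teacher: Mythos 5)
Your overall strategy matches the paper's: a minimum-counterexample (strong induction) argument, branching on a matching edge via a three-term recursion, and case analysis on degrees, with the matching-number drop translated into a geometric factor of the target $B(\Gamma)=2^{3m-n}3^{n-2m}$. Your preliminary reductions (to connected, $m>n/3$, $\delta\ge 2$), the degree-$1$ identity $\mis(\Gamma)=\mis(\Gamma-\{v,w\})+\mis(\Gamma-N[w])$, and the exact triangle identity $\mis(\Gamma)=\mis(\Gamma-\{a,b,c\})+\mis(\Gamma-N[b])+\mis(\Gamma-N[c])$ for a triangle with a degree-$2$ corner are all correct, and the fraction $\tfrac{15}{16}$ you report for a degree-$2$ corner whose two mates have degree $\ge 3$ checks out.

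However, there are two genuine gaps. First, your only general branching tool is the coarse bound $\mis(\Gamma)\le\mis(\Gamma-N[x])+\mis(\Gamma-N[y])+\mis(\Gamma-\{x,y\})$, and this is not sharp enough when neither endpoint of the chosen matching edge lies in a triangle. For $\deg(x)=\deg(y)=2$ with $N(x)=\{y,a\}$, $N(y)=\{x,b\}$, $a\ne b$, $ab\notin E$, it yields $\tfrac38+\tfrac38+\tfrac12=\tfrac54$; for $\deg(x)=2,\deg(y)=3$ (no triangle) it yields $\tfrac38+\tfrac9{32}+\tfrac12=\tfrac{37}{32}$. The paper's Cases 3 and 4 close these using the sharper observation that every maximal independent set avoiding both endpoints must contain $a$ (as $N(x)=\{y,a\}$), which gives $\mis(\Gamma-y;\overline{x})\le\mis(\Gamma-x-y-a-N(a))\le\tfrac14 B(\Gamma)$; your sketch has no substitute for this step and your triangle identity does not apply since there is no triangle. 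Second, for $\delta(\Gamma)\ge 3$, the coarse bound gives $\tfrac9{32}+\tfrac9{32}+\tfrac12=\tfrac{34}{32}$, and the phrase ``branches a second time, isolating the $K_4$'s'' is too vague to evaluate. The paper resolves this by first reducing to the case where \emph{every} vertex on the matching has degree exactly $3$ (since $\deg\ge4$ on one side already gives $\tfrac{127}{128}$), and then splitting on whether the two non-$v$ neighbours of $u$ together meet at least two matching edges (Cases 6 and 7); the second case further requires bounding $\mis(\Gamma-v;\overline u)$ via $\mis(\Gamma-u-v;x)+\mis(\Gamma-u-v;y)$ and a sub-case recursion. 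None of this structure is present in your sketch, and it is precisely the content that makes the extremal cases ($K_4$, $D_6$) emerge. A minor additional issue: in the ``triangle attached by a single edge $az$'' configuration, $\nu(\Gamma-\{v,a,b\})$ equals $m-1$ exactly (it cannot be $m-2$, since one may always take $va\in M$ and then $b$, having $N(b)=\{v,a\}$, is unmatched), and in fact $\nu(\Gamma-\{v,a,b,z\})=m-2$ (otherwise $M'\cup\{az,vb\}$ augments $M$), so the total is $\tfrac23+\tfrac14=\tfrac{11}{12}<1$ and no equality analysis is needed there.
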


Note that in~\cref{thm:smallermatchingnumber,thm:biggermatchingnumber} we do not have restriction on how large $\nu(\Gamma)$ is. 

\medskip

To establish these bounds, we start with some notation and observations. For a set $X$ and $x\in X$, we write $X-x\coloneqq X\setminus\{x\}$.  For a graph $\Gamma$ and a vertex set $X$, we write $\Gamma-X\coloneqq\Gamma[V(\Gamma)\setminus X]$.
Denote by $\MIS(\Gamma;v)$ the family of maximal independent sets containing $v$, and by $\MIS(\Gamma;\overline v)=\MIS(\Gamma)\setminus \MIS(\Gamma;v)$ those not containing $v$, and let $\mis(\Gamma;v)$ and $\mis(\Gamma;\overline v)$ denote the cardinality of these two sets, respectively.

The lemma below collects some simple facts about how $\mathrm{mis}(\Gamma)$ changes by removal of vertices.
\begin{lemma}\label{lem:basic-mis-ineq}
    Let $\Gamma$ be a graph and $v$ be an arbitrary vertex of $\Gamma$. Then the following hold.
   
       (i) For every induced subgraph $\Gamma'\subseteq \Gamma$, we have $\mathrm{mis}(\Gamma')\le \mathrm{mis}(\Gamma)$.
       
       (ii) $\mathrm{mis}(\Gamma;v)= \mathrm{mis}(\Gamma-v-N(v))$.
       
       (iii) $\mathrm{mis}(\Gamma;\overline{v})\le  \mathrm{mis}(\Gamma-v)$. 
       
       (iv) $\mathrm{mis}(\Gamma;\overline{v}) \le  \sum_{u\in N(v)}\mathrm{mis}(\Gamma;u)$.
       
        (v) Let $uv$ be an edge. Then 
        $$\mathrm{mis}(\Gamma)\le \mathrm{mis}(\Gamma-v-N(v))+\mathrm{mis}(\Gamma-u-N(u))+\mathrm{mis}(\Gamma-v;\overline{u}).$$
\end{lemma}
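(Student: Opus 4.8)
The plan is to prove each of the five items by exhibiting an explicit injection (or bijection) between the relevant families of maximal independent sets; the only non-bijective steps are handled by a greedy-extension argument in (i) and a union bound in (iv).

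For (i), since $\Gamma'$ is an \emph{induced} subgraph of $\Gamma$, every $J\in\MIS(\Gamma')$ is an independent set of $\Gamma$, so it extends to some $\phi(J)\in\MIS(\Gamma)$ with $J\subseteq\phi(J)$. This $\phi$ is injective: if $\phi(J_1)=\phi(J_2)=I$, then $J_1,J_2\subseteq I\cap V(\Gamma')$, and since $I\cap V(\Gamma')$ is an independent set of $\Gamma'$ containing the \emph{maximal} independent set $J_i$, we conclude $J_1=I\cap V(\Gamma')=J_2$. Hence $\mathrm{mis}(\Gamma')\le\mathrm{mis}(\Gamma)$. For (ii), the maps $I\mapsto I-v$ and $J\mapsto J\cup\{v\}$ are mutually inverse bijections between $\MIS(\Gamma;v)$ and $\MIS(\Gamma-v-N(v))$: independence is immediate, and the maximality checks reduce to the observation that a vertex outside $\{v\}\cup N(v)$ is adjacent to some vertex of $I$ precisely when it is adjacent to some vertex of $I-v$ (and a vertex of $N(v)$ is always killed by $v$ itself).

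For (iii), if $v\notin I\in\MIS(\Gamma)$, then $I$ is already a maximal independent set of $\Gamma-v$ — any vertex of $\Gamma-v$ non-adjacent to all of $I$ could be added to $I$ in $\Gamma$, contradicting maximality — so $I\mapsto I$ is an injection $\MIS(\Gamma;\overline v)\hookrightarrow\MIS(\Gamma-v)$, giving $\mathrm{mis}(\Gamma;\overline v)\le\mathrm{mis}(\Gamma-v)$. For (iv), every $I\in\MIS(\Gamma;\overline v)$ must contain a neighbour of $v$ by maximality, so $\MIS(\Gamma;\overline v)\subseteq\bigcup_{u\in N(v)}\MIS(\Gamma;u)$, and the union bound yields $\mathrm{mis}(\Gamma;\overline v)\le\sum_{u\in N(v)}\mathrm{mis}(\Gamma;u)$. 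For (v), partition $\MIS(\Gamma)$ into the sets containing $v$, the sets containing $u$ (these two classes are disjoint since $uv\in E(\Gamma)$), and the sets containing neither; the first two classes are counted by $\mathrm{mis}(\Gamma-v-N(v))$ and $\mathrm{mis}(\Gamma-u-N(u))$ via (ii), while a set in the third class lies in $\MIS(\Gamma-v;\overline u)$ by the argument used for (iii). Summing gives the stated inequality.

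There is essentially no real obstacle here: these are standard vertex-deletion inequalities for $\mathrm{mis}$. The only points requiring a little care are the injectivity argument in (i) (which genuinely uses that $\Gamma'$ is induced and that $J$ is maximal in $\Gamma'$) and the maximality verifications in (ii)–(iii), for which it suffices to track how neighbourhoods behave under deleting $v$ and $N(v)$.
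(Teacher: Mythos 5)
Your proof is correct and takes essentially the same approach as the paper: each item is proved by an explicit injection or set-inclusion between the relevant families of maximal independent sets, with (v) obtained by combining (ii) and (iii) via a partition of $\MIS(\Gamma)$ by membership of $u$ and $v$. The only cosmetic difference is that the paper phrases (v) as a chain of inclusions $\MIS(\Gamma)\subseteq\MIS(\Gamma;v)\cup\MIS(\Gamma;u)\cup\MIS(\Gamma-v;\overline u)$ rather than a three-way partition, but this is the same argument.
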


\begin{proof}
    (i) For each $I' \in \MIS(\Gamma')$, there is at least one maximal independent set $I \in \MIS(\Gamma)$ such that $I'\subseteq I$. Furthermore, for $I',J' \in \MIS(\Gamma')$ with $I' \neq J'$, by the maximality of $I'$ and $J'$ in $\Gamma'$, there is no $I\in \MIS(\Gamma)$ such that $I',J'\subseteq I$. Hence $\mis(\Gamma') \leq \mis(\Gamma)$.

    (ii) Note that a set $I\in\MIS(\Gamma;v)$ if and only if $I \cap N(v) = \varnothing$ and $I-v \in \MIS(\Gamma-v-N(v))$. Thus $\MIS(\Gamma;v) = \MIS(\Gamma-v-N(v)).$

    (iii) If $I \in \MIS(\Gamma;\overline v)$, then $I \in \MIS(\Gamma-v)$, and thus $\MIS(\Gamma;\overline v) \subseteq \MIS(\Gamma-v)$.

    (iv) If $I \in \MIS(\Gamma;\overline v)$, then $I \cap N(v) \neq \varnothing$. We have $\MIS(\Gamma;\overline v) = \bigcup_{u\in N(v)}\MIS(\Gamma;u)$.
    
    (v) Using (the proof of) part (iii), if $uv$ is an edge in $\Gamma$, then we have
    \begin{align*}
    \MIS(\Gamma) &= \MIS(\Gamma;v) \cup \MIS(\Gamma;\overline v)\\ & \subseteq \MIS(\Gamma;v) \cup \MIS(\Gamma - v)\\
    &= \MIS(\Gamma;v) \cup \MIS(\Gamma - v;u) \cup \MIS(\Gamma - v;\overline u)\\ &= \MIS(\Gamma;v) \cup \MIS(\Gamma;u) \cup \MIS(\Gamma - v;\overline u).
    \end{align*}
    The conclusion then follows from part (ii).
\end{proof}

\subsection{Proof of~\cref{thm:smallermatchingnumber}}

We say that $\Gamma$ is {\it extremal} if each component of $\Gamma$ is a $K_3$ or an isolated vertex. It is easy to see that $\MIS(\Gamma) = 3^m$ for an extremal graph $\Gamma$. Let $\Gamma$ be a counterexample with minimum number of vertices. Then $\Gamma$ is not extremal, and $\Gamma$ has at least one edge as the edgeless graph is extremal, giving that $m\geq1$. The only connected graphs with matching number 1 are $K_3$ and stars. Hence, when $m=1$, since $\Gamma$ is not extremal, it is a disjoint union of a star and isolated vertices, and $\mis(\Gamma) = 2 < 3^1$. Hence, we may assume that $m\ge 2$.
    
    Choose a maximum matching $M$ and $uv \in E(M)$. Note that each of $\Gamma-u-N(u)$, $\Gamma-v-N(v)$ and $\Gamma-v-u$ has matching number at most $m-1$. By the minimality of $\Gamma$ we have 
    \begin{equation}\label{eq:type-A}
         \mis(\Gamma-u-N(u)),\ \mis(\Gamma-v-N(v)),\ \mis(\Gamma-u-v) \leq 3^{m-1}.
    \end{equation}
    Now \cref{lem:basic-mis-ineq}(iii,v) gives that
    \[
    \mis(\Gamma) \leq \mis(\Gamma-u-N(u)) + \ \mis(\Gamma-v-N(v)) + \ \mis(\Gamma-u-v) \leq 3^m.
    \]
    It remains to show that at least one of the three inequalities in \eqref{eq:type-A} is strict.
    
    Suppose all three equalities hold in \eqref{eq:type-A}. Then by the minimality of $\Gamma$, each of the three graphs must be extremal and have matching number precisely $m-1$, and $\Gamma - u - v$ consists of $m-1$ disjoint $K_{3}$'s and isolated vertices. Suppose $u$ has an edge to any of these triangles, say to vertex $x$. Then $\Gamma - u - N(u)$ is an induced subgraph of $\Gamma - u - v - x$ which has strictly less than $3^{m-1}$ maximal independent sets, a contradiction. Similarly $v$ is also not adjacent to any vertex from these triangles. 
    
    Since the matching number of $\Gamma$ is $m$, it follows that $u,v$ with $N(u)$ and $N(v)$ together form a component $C$ isolated from these $m-1$ triangles, with matching number 1. Then the only way we can achieve $3^{m}$ maximal independent sets in $\Gamma$ is when $C$ is a triangle. But then $\Gamma$ is extremal, a contradiction.

\subsection{Proof of~\cref{thm:biggermatchingnumber}}

We say that $\Gamma$ is {\it extremal} if each component of $\Gamma$ is a $K_2$, $K_3$, $K_4$ or a $D_6$. An extremal graph $\Gamma$ satisfies that $\mis(\Gamma) = 2^{3m-n}3^{n-2m}$. Indeed, suppose the numbers of $K_2$, $K_3$, $K_4$ and $D_6$ are $a,b,c$ and $d$ respectively, then
\[
m=a+b+2c+3d,\quad n=2a+3b+4c+6d,\quad \mis(\Gamma)=2^{a+2c+3d}3^b=2^{3m-n}3^{n-2m}.
\]

Suppose for a contradiction that \cref{thm:biggermatchingnumber} is false, and let $\Gamma$ be   a counterexample  with minimum number of vertices. Choose a maximum matching $M$ in $\Gamma$, and an edge $uv \in E(M)$. Without loss of generality we may assume  $\deg(u) \leq \deg(v)$. Define $a(m,n) \coloneqq 2^{3m-n}3^{n-2m}$.

    Considering the $(n-2)$-vertex graph $\Gamma-u-v$ with $\nu(\Gamma-u-v) = m-1$, by \cref{lem:basic-mis-ineq}(iii) and the minimality of $\Gamma$, we have
\begin{equation}\label{eq:deleteedge}
    \mis(\Gamma-v;\overline u) \leq \mis(\Gamma-u-v) \leq a(m-1,n-2) = 2^{3m-n-1}3^{n-2m} = \frac{1}{2}a(m,n),
    \end{equation}
    and the second inequality is equality  if and only if $\Gamma-u-v$ is extremal.

\begin{claim}\label{cl:deletevertex}
    For any $x\in V(M)$ with $\deg(x)=d$, we have
    $$\mis(\Gamma-x-N(x)) \leq a(m-d,n-d-1) =  \frac{3^{d-1}}{2^{2d-1}}a(m,n),$$
    and equality holds if and only if $\Gamma-x-N(x)$ is extremal.     
\end{claim}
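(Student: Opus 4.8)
The plan is to argue by minimality of $\Gamma$, in the spirit of the ambient proof. Set $\Gamma'\coloneqq\Gamma-x-N(x)$, so $\Gamma'$ has $n'\coloneqq n-d-1$ vertices; write $m'\coloneqq\nu(\Gamma')$. Since $\Gamma'$ has fewer vertices than $\Gamma$, there are two cases. If $m'\le n'/3$, then \cref{thm:smallermatchingnumber} (already proven) gives $\mis(\Gamma')\le 3^{m'}$, with equality only when $\Gamma'$ is type-A. If $m'>n'/3$, then by the minimality of the counterexample $\Gamma$ we have $\mis(\Gamma')\le a(m',n')$, with equality only when $\Gamma'$ is type-B. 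I will unify these via the elementary fact
\[
3^{k}\le a(k,N)\quad\text{for all integers }0\le k\le N/3,
\]
with equality precisely when $k=N/3$; this holds since $a(k,N)/3^{k}=(27/8)^{-k}(3/2)^{N}$ is strictly decreasing in $k$ and equals $1$ at $k=N/3$. Hence in both cases $\mis(\Gamma')\le a(m',n')$.

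The second ingredient is the inequality $m'\ge m-d$. Indeed, $M$ has $m$ edges and each of the $d+1$ vertices in $\{x\}\cup N(x)$ lies on at most one edge of $M$; since $x\in V(M)$, the $M$-partner of $x$ lies in $N(x)$, so two of the deleted vertices lie on a single edge of $M$. Therefore at most $d$ edges of $M$ are destroyed, so $M$ restricted to $\Gamma'$ is a matching of size at least $m-d$. As $a(\cdot,n')=(8/9)^{(\cdot)}(3/2)^{n'}$ is strictly decreasing in its first argument, we get
\[
\mis(\Gamma')\ \le\ a(m',n')\ \le\ a(m-d,\,n-d-1),
\]
and a direct computation checks $a(m-d,n-d-1)=2^{3(m-d)-(n-d-1)}3^{(n-d-1)-2(m-d)}=\tfrac{3^{d-1}}{2^{2d-1}}\,a(m,n)$, giving the claimed bound.

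For equality: equality in the displayed chain forces both $\mis(\Gamma')=a(m',n')$ and $a(m',n')=a(m-d,n')$, and the second identity gives $m'=m-d$ by strict monotonicity of $a(\cdot,n')$. For the first, the equality discussions above show $\Gamma'$ is type-B when $m'>n'/3$, and type-A when $m'\le n'/3$; but in the latter case equality in $3^{m'}\le a(m',n')$ additionally forces $m'=n'/3$, whence $\Gamma'$ is a vertex-disjoint union of triangles and so is type-B as well. Conversely any type-B graph $H$ has $\mis(H)=a(\nu(H),|V(H)|)$ (verify on $K_2,K_3,K_4,D_6$ and use multiplicativity over components), so if $\Gamma'$ is type-B with $\nu(\Gamma')=m-d$ then equality holds throughout. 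Thus equality holds exactly when $\Gamma-x-N(x)$ is type-B.

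I do not anticipate a real obstacle: the claim is essentially an induction/bookkeeping step resting on \cref{thm:smallermatchingnumber} and the minimality hypothesis. The two points needing care are (i) gluing the Moon--Moser regime $\mis\le 3^{m'}$ to the $a(m',n')$-regime through the inequality $3^{k}\le a(k,N)$ and tracking its equality case (this is exactly what lets the type-A extremal configurations at $m'=n'/3$ be reabsorbed as type-B ones), and (ii) the matching accounting, where the hypothesis $x\in V(M)$---so that one edge of $M$ is paid for by two of the $d+1$ deleted vertices---is what sharpens the generic estimate $m'\ge m-d-1$ to $m'\ge m-d$.
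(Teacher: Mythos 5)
Your proof is correct and follows the paper's argument: bound $\nu(\Gamma-x-N(x))\ge m-d$ by matching accounting, invoke minimality of the counterexample (together with \cref{thm:smallermatchingnumber}) on the smaller graph, and use monotonicity of $a(\cdot,n')$ in its first argument. One small inaccuracy in your closing remark: $m'\ge m-d$ holds even when $x\notin V(M)$ (since then $x$ lies on no $M$-edge, so only the $d$ neighbors can each destroy an $M$-edge), so the hypothesis $x\in V(M)$ is not what sharpens the estimate from $m-d-1$ to $m-d$; it is simply the context in which the claim is applied.
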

 \begin{poc}
     Observe that $\Gamma-x-N(x)$ is an $(n-d-1)$-vertex graph, and since $\{x\} \cup N(x)$ appears in at most $d$ edges of the matching $M$, we have $\nu(\Gamma-x-N(x)) \geq m-d$. As $a(m,n)$ is a decreasing function of $m$ (when $n$ is fixed), the minimality of $\Gamma$ gives     
    $$\mis(\Gamma-x-N(x)) \leq a(m-d,n-d-1) = 2^{3m-n-2d+1}3^{n-2m+d-1} = \frac{3^{d-1}}{2^{2d-1}}a(m,n),$$
    and equality holds if and only if $\Gamma-x-N(x)$ is extremal. 
 \end{poc}

    We split the rest of the proof into 7 cases depending on the degrees of $u$ and $v$.

    \paragraph{Case 1.} $\deg(u) = \deg(v) = 1$.\\ Then $\Gamma$ is a disjoint union of the edge $uv$ and $\Gamma-u-v$. As $\Gamma$ is not extremal, $\Gamma-u-v$ is not extremal either. Hence we have the strict inequality in~\eqref{eq:deleteedge}, and
    \[
    \mis(\Gamma) = 2\cdot \mis(\Gamma-u-v) < 2 \cdot \frac{1}{2}a(m,n) = a(m,n).
    \]

    \paragraph{Case 2.} $\deg(u) = 1$ and $\deg(v) \geq 2$.\\ In this case, $\MIS(\Gamma-v;\overline u) = \varnothing$.
    By \cref{cl:deletevertex} we know that
    \[
    \mis(\Gamma-v-N(v)) \leq \frac{3}{8}a(m,n).
    \]
    Note that in this case $N(u)=\{v\}$. Thus, by~\cref{lem:basic-mis-ineq}(v) and \eqref{eq:deleteedge}, we have
    \[
    \begin{split}
\mis(\Gamma) \leq \mis(\Gamma-u-v) + \mis(\Gamma-v-N(v)) + \mis(\Gamma-v;\overline u)\leq \frac{1}{2}a(m,n) + \frac{3}{8}a(m,n)  = \frac{7}{8}a(m,n).
    \end{split}
    \]
    Since this case will be used later, we analyze what happens when the equalities hold.  Note first that equality in~\eqref{eq:deleteedge} implies $\Gamma-u-v$ is extremal. Next, equality in~\cref{cl:deletevertex} implies that $N(v)=\{u,x\}$ with some $x \in V(M)$, and $\Gamma-u-v-x$ is extremal with $\nu(\Gamma-u-v-x) = m-2$. As both $\Gamma-u-v$ and $\Gamma-u-v-x$ are extremal, $x$ can only lie in a component of  $\Gamma-u-v$ that is either a $K_3$, $K_4$ or   $D_6$. 
    
    If $x$ is in a $K_3$ component in $\Gamma-u-v$, then $\nu(\Gamma-u-v-x)=\nu(\Gamma-u-v)=m-1$, a contradiction.
    
    If   $x$ is in a $K_4$ component in $\Gamma-u-v$, then $uvx$ is an induced $P_3$ and   $\deg(x) = 4$ in $\Gamma$. 

    If   $x$ is in a $D_6$ component in $\Gamma-u-v$, then $\deg(x) = 4$ in $\Gamma$.

\paragraph{Case 3.} $\deg(u) = \deg(v) = 2$. \\ Let $N(u)=\{v,x\}$ and $N(v)=\{u,y\}$, where  we allow $x=y$.

Suppose first that $x \in V(M)$ (the case when $y\in V(M)$ is similar). Recall that $\Gamma-u-v$ is an $(n-2)$-vertex graph with $\nu(\Gamma-u-v)=m-1$. Also, by~\cref{lem:basic-mis-ineq}(ii), $\mis(\Gamma-u-v;x)\le \mis(\Gamma-u-v-x-N(x))$. Applying \cref{cl:deletevertex} to vertex $x$ (which lies in a maximum matching $M-uv$) in  $\Gamma-u-v$, we get that
    \[
    \mis(\Gamma-u-v;x)\le \mis(\Gamma-u-v-x-N(x))\leq \frac{1}{2}a(m-1,n-2) = \frac{1}{4}a(m,n).
    \]
We also have $\mis(\Gamma-u-N(u)),\mis(\Gamma-v-N(v)) \leq \frac{3}{8}a(m,n)$ from \cref{cl:deletevertex}. Note that every $I\in \mis(\Gamma-v;\overline u)$ contains $x$ as $N(u)=\{v,x\}$. This, together with \cref{lem:basic-mis-ineq}(iii), implies that 
\begin{equation}\label{eq:-v-ubar}
    \mis(\Gamma-v;\overline u) \leq \mis(\Gamma-u-v;x) \leq \frac{1}{4}a(m,n).
\end{equation}
Then by \cref{lem:basic-mis-ineq}(v), we have
    \[
    \mis(\Gamma) \leq \frac{3}{8}a(m,n) + \frac{3}{8}a(m,n) + \frac{1}{4}a(m,n) = a(m,n).
    \]
    When the equalities hold, $\mis(\Gamma-u-v;x)$ attains $a(m-1,n-2)/2$ in \cref{cl:deletevertex}, hence $x$ has degree 1 (let $x \sim z$) in $\Gamma-u-v$ and $\Gamma-u-v-x-z$ is extremal with $\nu(\Gamma-u-v-x-z)=m-2$. Furthermore, $\mis(\Gamma-u-N(u)) = \mis(\Gamma-u-v-x)$ attains $a(m-2,n-3)$, hence $\Gamma-u-v-x$ is also extremal with $\nu(\Gamma-u-v-x)=m-2$. This implies that $z$ is in a $K_3$ component in $\Gamma-u-v-x$. 
    Indeed, if $z$ was part of a $K_2$ then
    $\Gamma-u-v-x-z$ would not be extremal, and 
    if $z$ was part of a $K_4$ or a $D_6$ then
$\nu(\Gamma-u-v-x)=m-1$, a contradiction.


    If $y \neq x$, then $\Gamma-u-v-y = \Gamma-v-N(v)$ is also extremal. Then $y\ne z$, otherwise $x$ would be an isolated vertex in $\Gamma-u-v-y$.  Now, $x \sim z$ in $\Gamma-u-v-y$, where $x$ has degree $1$ and $z$ has degree at least $2$, which is impossible in an extremal graph. Therefore, $x=y$ and $\{u,v,x\}$ is a $K_3$  in a $D_6$ component in $\Gamma$, where the other $3$ vertices form a $K_3$ component in the extremal graph $\Gamma-u-v-x$. This contradicts the fact that $\Gamma$ is not extremal.
    
    We may then assume that $x,y \notin V(M)$. Then by the maximality of $M$, we must have $x=y$. Note that $\Gamma-u-N(u)=\Gamma-u-v-x$ is an $(n-3)$-vertex graph with $\nu(\Gamma-u-N(u)) \geq m-1$. Hence by~\cref{lem:basic-mis-ineq}(ii),
    \[
    \mis(\Gamma;u) \leq \mis(\Gamma-u-v-x) \leq a(m-1,n-3)=2^{3m-n}3^{n-2m-1} = \frac{1}{3}a(m,n).
    \]
    Similarly, $\mis(\Gamma;v) \leq a(m,n)/3$. On the other hand,
    \[
    \mis(\Gamma-v;\overline u) \leq \mis(\Gamma-v;x)\le \mis(\Gamma-v-x-N(x)) \leq \mis(\Gamma-u-v-x) \leq \frac{1}{3}a(m,n),
    \]
    where the first inequality follows from $N(u)=\{v,x\}$ and the second and third ones follow from Lemma~\ref{lem:basic-mis-ineq}(ii) and (i) respectively.
    Putting  together, we also have $\mis(\Gamma) \leq a(m,n)$.

    When the equalities hold, $\mis(\Gamma-v;x) = a(m,n)/3 = a(m,n-1)/2$ and $\nu(\Gamma-u-v-x) = m-1$. Now $ux$ is an edge in a maximum matching $M-uv+ux$ of $\Gamma-v$, and by \eqref{eq:deleteedge} we know that $ux$ is an isolated edge in $\Gamma-v$ and $\Gamma-u-v-x$ is extremal. Therefore, $\{u,v,x\}$ is an isolated triangle in $\Gamma$, contradicting  the fact that $\Gamma$ is not extremal.
    
    \paragraph{Case 4.} $\deg(u) = 2$ and $\deg(v) \geq 3$.\\ Let $N(u)=\{v,x\}$. If $x\in V(M)$, then by~\cref{cl:deletevertex} and~\eqref{eq:-v-ubar} we have
    \[
    \mis(\Gamma-u-N(u)) \leq \frac{3}{8}a(m,n),\quad \mis(\Gamma-v-N(v)) \leq \frac{9}{32}a(m,n),\quad \mis(\Gamma-v;\overline u) \leq \frac{1}{4}a(m,n).
    \]
    Putting these together we have $\mis(\Gamma) \leq 29a(m,n)/32$.
    
    If $x\notin V(M)$, then we still have $\mis(\Gamma-u-N(u)),\mis(\Gamma-v;\overline u) \leq a(m,n)/3$ as in the previous case, and from \cref{cl:deletevertex} we have $\mis(\Gamma-v-N(v)) \leq 9a(m,n)/32$. 
Putting these together we have $\mis(\Gamma) \leq 91a(m,n)/96$.

    \paragraph{Case 5.} $\deg(u) \geq 3$ and $\deg(v) \geq 4$.\\ By \cref{lem:basic-mis-ineq}(iii) and \eqref{eq:deleteedge} we have
    \[
    \mis(\Gamma-v;\overline u) \leq \mis(\Gamma-u-v) \leq \frac{1}{2}a(m,n),
    \]
    and \cref{cl:deletevertex} implies
    \[
    \mis(\Gamma-u-N(u)) \leq \frac{9}{32}a(m,n),\quad \quad \mis(\Gamma-v-N(v)) \leq \frac{27}{128}a(m,n).
    \]
    Putting these together, by \cref{lem:basic-mis-ineq}(v) we have $\mis(\Gamma) \leq 127a(m,n)/128$.

\medskip

    The only remaining case is when $\deg(u) = \deg(v) = 3$. Since the choice of $uv$ in $M$ is arbitrary, from now on we may  suppose that every vertex in $M$ has degree $3$. Denote $N(u)=\{v,x,y\}$ and $N(v)=\{u,z,w\}$.

    \paragraph{Case 6.} Both $\{x,y\}$ and $\{z,w\}$ appear in at most one edge in $M$.\\ In this case $\Gamma-u-N(u)$ is an $(n-4)$-vertex graph with $\nu(\Gamma-u-N(u)) \geq m-2$. Hence
    \[
    \mis(\Gamma-u-N(u)) \leq a(m-2,n-4)= 2^{3m-n-2}3^{n-2m} = \frac{1}{4}a(m,n).
    \]
    Similarly, $\mis(\Gamma-v-N(v)) \leq a(m,n)/4$. Putting together with \eqref{eq:deleteedge}, we have from \cref{lem:basic-mis-ineq}(v) that $\mis(\Gamma) \leq a(m,n)$. 
    
    When the equalities of \eqref{eq:deleteedge} hold, we know that $\Gamma-u-v$ is extremal, and each maximal independent set of $\Gamma-u-v$ contains some neighbor of $u$. It implies that $\{x,y\}$ is an isolated edge in $\Gamma-u-v$. Analogously, $\{z,w\}$ is also an isolated edge. Depending on $\{x,y\}=\{z,w\}$ or not, $\{u,v,x,y,z,w\}$ forms an isolated copy of $K_4$ or $D_6$, and the rest of the graph is extremal. This contradicts the fact that $\Gamma$ is not extremal.

    \paragraph{Case 7.} One of $\{x,y\}$ and $\{z,w\}$ appears in at least two edges of $M$.\\
    Without loss of generality we may suppose that $xp,yq \in E(M)$ with $p \neq y$ and  $q \neq x$. We know from \cref{cl:deletevertex} that
    \[
    \mis(\Gamma-u-N(u)),\ \mis(\Gamma-v-N(v)) \leq \frac{9}{32}a(m,n).
    \]

    If $N(x) = \{u,v,p\}$, then  we know that $p$ cannot be adjacent to $u$ (otherwise we have $p = y$). Apart from $x$ and $v$, $p$ is adjacent to another vertex. Now in $\Gamma-u-v$ we have  $\deg(x) = 1$ and $\deg(p) \geq 2$, hence we may apply Case 2 to this case. Note that every vertex in $M$ has degree $3$, hence  equality in Case 2 cannot be attained. Therefore,
    \[
    \mis(\Gamma-v;\overline u) \leq \mis(\Gamma-u-v) < \frac{7}{8}a(m-1,n-2) = \frac{7}{16}a(m,n).
    \]
    The same argument works when $N(y) = \{u,v,q\}$.

    If $N(x) \neq \{u,v,p\}$ and $N(y) \neq \{u,v,q\}$, then applying~\cref{lem:basic-mis-ineq}(ii) and~\cref{cl:deletevertex}  to $\Gamma-u-v$ gives
    \[
    \mis(\Gamma-u-v;x), \ \mis(\Gamma-u-v;y)  \leq \frac{3}{8}a(m-1,n-2) = \frac{3}{16}a(m,n).
    \]
    Together with \cref{lem:basic-mis-ineq}(iv), we have
    \[
    \mis(\Gamma-v;\overline u) \leq \mis(\Gamma-u-v;x) + \mis(\Gamma-u-v;y) \leq \frac{3}{8}a(m,n).
    \]
    In both cases, by \cref{lem:basic-mis-ineq}(v) we have
    \[
    \mis(\Gamma) < \frac{9}{32}a(m,n) + \frac{9}{32}a(m,n) + \frac{7}{16}a(m,n) = a(m,n),
    \]
    as desired. This completes the proof of~\cref{thm:biggermatchingnumber}.

\subsection{Three stability extensions}
\begin{lemma}\label{lem:degree4}
    Let $\Gamma$ be an $n$-vertex graph, and $M$ be a matching of $m$ edges of $\Gamma$. Suppose that $V_1,\dots,V_k$ are pairwise disjoint vertex sets, each containing at least one edge from $M$, and for each $i$, every vertex in $V(M) \cap V_i$ has degree at least $4$ within $V_i$. Then,
    \[
    \mis(\Gamma) \leq a(k,m,n) := \left(\frac{59}{64}\right)^{k}2^{3m-n}3^{n-2m}.
    \]
\end{lemma}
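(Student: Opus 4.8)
The proof will follow the same induction-on-vertices / delete-a-vertex strategy as the proof of \cref{thm:biggermatchingnumber}, but now tracking the extra savings $(59/64)^k$ contributed by the ``dense'' blocks $V_1,\dots,V_k$. The natural setup is to prove, by induction on $|V(\Gamma)|$, that if $\Gamma$ has a matching $M$ of size $m$ and pairwise disjoint sets $V_1,\dots,V_k$, each containing an edge of $M$ and each with the property that every $M$-saturated vertex inside $V_i$ has at least $4$ neighbours inside $V_i$, then $\mis(\Gamma)\le a(k,m,n)$. Picking an edge $uv\in M$, we branch via \cref{lem:basic-mis-ineq}(v): $\mis(\Gamma)\le \mis(\Gamma-u-N(u))+\mis(\Gamma-v-N(v))+\mis(\Gamma-v;\overline u)$, and we must check that each of the three resulting graphs, with the surviving portion of $M$ and the surviving $V_i$'s, still satisfies the hypotheses, so that the inductive bound applies with appropriately decremented parameters.

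\textbf{Key steps.} First, dispose of the case where $uv$ lies in \emph{no} $V_i$: then $V_1,\dots,V_k$ all survive (after deleting at most the handful of vertices $\{u,v\}\cup N(u)$ or $\{u,v\}$ — a subtlety, since deleting $N(u)$ could kill vertices \emph{inside} some $V_i$; here one uses that deleting a vertex from $V_i$ can only \emph{help}, because removing a vertex from a graph cannot increase $\mis$ by \cref{lem:basic-mis-ineq}(i), and if deleting vertices destroys the ``degree $\ge 4$ in $V_i$'' property we simply drop that block from the list, which is fine as long as $M\cap V_i$ becomes empty — so one should be careful to delete \emph{whole} matching edges or none). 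The cleanest route is: when $uv\notin\bigcup V_i$, apply the ordinary \cref{thm:biggermatchingnumber}-style bounds on the three pieces but retain the factor $(59/64)^k$ from the intact blocks; the analysis of \cref{thm:biggermatchingnumber} already gives each of $\mis(\Gamma-u-N(u)),\mis(\Gamma-v-N(v)),\mis(\Gamma-v;\overline u)$ at most $\tfrac13 a(m,n),\tfrac13 a(m,n),\tfrac12 a(m,n)$ in the worst relevant sub-case (degree $\le 2$), and worse degrees only help, so the sum is $\le a(m,n)$ and multiplying through by $(59/64)^k$ is legitimate once one confirms the blocks pass to the subgraphs. Second — the heart of the argument — suppose $uv\subseteq V_i$ for some $i$. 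Then $\deg_\Gamma(u),\deg_\Gamma(v)\ge \deg_{V_i}(u),\deg_{V_i}(v)\ge 4$, so we are in (a refinement of) Case~5 of \cref{thm:biggermatchingnumber}: \cref{cl:deletevertex} gives $\mis(\Gamma-u-N(u))\le \tfrac{27}{128}a(m,n)$ and $\mis(\Gamma-v-N(v))\le\tfrac{27}{128}a(m,n)$, while $\mis(\Gamma-v;\overline u)\le\tfrac12 a(m,n)$; but now in the first two terms the block $V_i$ has been destroyed (its matching edge $uv$ is gone), so the surviving data is $(k-1,m-1,n-d-1)$ with $d\ge 4$, and one recomputes: $\mis(\Gamma-u-N(u))\le (59/64)^{k-1}a(m-4,n-5)=(59/64)^{k-1}\cdot\tfrac{27}{128}a(m,n)$, similarly for $v$; for the third term $V_i$ survives (nothing in $V_i$ but possibly $v$ is deleted — actually $v$ \emph{is} deleted, so again $V_i$ loses its $M$-edge and we drop it, getting $(59/64)^{k-1}\cdot\tfrac12 a(m,n)$). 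Summing, $\mis(\Gamma)\le (59/64)^{k-1}(\tfrac{27}{128}+\tfrac{27}{128}+\tfrac12)a(m,n)=(59/64)^{k-1}\cdot\tfrac{59}{64}a(m,n)=(59/64)^k a(m,n)$, which is exactly the claimed bound — and this explains the constant $59/64$.

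\textbf{Subtleties and the main obstacle.} The delicate bookkeeping is the interplay between vertex deletions from \cref{lem:basic-mis-ineq}(v) and the block hypotheses: deleting $N(u)$ or $N(v)$ can remove vertices lying in blocks $V_j$ with $j\ne i$, and although this never \emph{increases} $\mis$, it can spoil the ``degree $\ge 4$ inside $V_j$'' condition for the surviving $M$-saturated vertices of $V_j$. The resolution — which I expect to be the technically fussy part — is to observe that if every such destroyed block has lost at least one of its $M$-edges in the process, we may simply delete those edges from $M$ as well (passing to a smaller matching $m'<m$) and drop the affected blocks; since $a(k,m,n)$ is monotone in the right direction in $m$ and in $k$, this only weakens the target and the induction still closes. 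One must verify that $a(k-j,m-j,n)\le a(k,m,n)$-type monotonicity (here losing a block and a matching edge simultaneously: $(59/64)^{-1}\cdot 2^{-3}3^{2}=\tfrac{9}{8}\cdot\tfrac{64}{59}>1$, so dropping a block-plus-edge genuinely makes the bound larger, hence safe) and, symmetrically in the ``no block'' case, that intact blocks really do survive intact to at least one of the three subgraphs in the branching. Careful case analysis on whether $N(u)$ or $N(v)$ meets a foreign block, combined with the fact that in the ``$uv$ in a block'' case the high degrees $\ge 4$ give us enough slack, should make everything fit; the hardest single step is making the ``intact block survives'' claim airtight in the base-like small cases (e.g.\ when $\deg(u)=1$), where the branching degenerates.
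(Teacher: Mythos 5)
The high-level plan (induction on $n$, branch via \cref{lem:basic-mis-ineq}(v), track the $(59/64)^k$ factor) and the arithmetic $\frac{27}{128}+\frac{27}{128}+\frac{1}{2}=\frac{59}{64}$ are correct, but there is a genuine gap where you yourself flag the trouble: when you delete all of $N(u)$ (or $N(v)$), the deletion can enter a foreign block $V_j$ and destroy its ``degree $\geq 4$ inside $V_j$'' property. Your proposed repair — drop the damaged blocks $V_j$ and delete their matching edges from $M$ — is not worked out, and it would have to be, since a single deleted vertex can lie in $V_j$ without removing any $M$-edge from $V_j$, in which case your ``delete whole $M$-edges or none'' policy does not apply and the block is simply ruined. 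The paper bypasses this entirely by a small but essential trick you do not have: instead of deleting all of $N(v)$, it deletes only $v$ and $N(v)\cap V_1$, invoking \cref{lem:basic-mis-ineq}(i) to get $\mis(\Gamma-v-N(v))\le \mis(\Gamma-v-(N(v)\cap V_1))$. Since $V_1$ is disjoint from $V_2,\dots,V_k$, this deletion provably leaves every other block untouched, the number of $M$-edges removed is still at most $d=\deg_{V_1}(v)\ge 4$, and the induction applies cleanly with parameters $(k-1,m-d,n-d-1)$. That observation is the heart of the lemma, and the proof does not close without it or an equally precise substitute.

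Two secondary issues. First, the paper never needs your ``$uv$ lies in no $V_i$'' case: it simply \emph{chooses} the matching edge $uv$ to lie in $M\cap V_1$, which the hypothesis guarantees is possible; so that entire branch of your argument is dead weight. Second, the numbers you quote in that branch do not even close — you claim bounds of $\tfrac13 a(m,n)$, $\tfrac13 a(m,n)$, $\tfrac12 a(m,n)$ for the three terms, but $\tfrac13+\tfrac13+\tfrac12=\tfrac76>1$; in the proof of \cref{thm:biggermatchingnumber} the corresponding sub-case uses a sharper estimate $\mis(\Gamma-v;\overline u)\le\tfrac13 a(m,n)$ (not $\tfrac12$) to make the sum $\le a(m,n)$. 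These errors are avoidable once you adopt the paper's choice of $uv\in M\cap V_1$ and restrict the neighborhood deletion to $V_1$.
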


\begin{proof}
    Suppose $\Gamma$ is a counterexample with minimum number of vertices.
    As \cref{thm:matching} implies the case $k=0$, we may assume $k\ge 1$.
    Choose $u \sim v$ in $M \cap V_1$. Recall that $v$ has $d\ge 4 $ neighbors in $V_1$. 

    Similarly to \cref{cl:deletevertex}, after deleting $v$ and the neighborhood of $v$  in $V_1$, $\Gamma-v-(N(v) \cap V_1)$ is an $(n-1-d)$-vertex graph, and $\{v\} \cup (N(v) \cap V_1)$ appears in at most $d$ edges in $M$. Furthermore, this deletion does not affect $V(M)\cap V_i$ for any $2\le i\le k$. Hence by \cref{lem:basic-mis-ineq}(i) and the minimality of $\Gamma$ we obtain
    \[
    \begin{split}
        \mis(\Gamma-v-N(v)) &\leq \mis(\Gamma-v-(N(v) \cap V_1)) \leq a(k-1,m-d,n-d-1)\\
        &= \frac{64\cdot3^{d-1}}{59\cdot2^{2d-1}}a(k,m,n) \leq \frac{27}{118}a(k,m,n).
    \end{split}
    \]
    Similarly, $\mis(\Gamma-u-N(u)) \leq 27a(k,m,n)/118$. By~\cref{lem:basic-mis-ineq}(iii), we have
    \[
    \mis(\Gamma-v;\overline u) \leq \mis(\Gamma-u-v) \leq a(k-1,m-1,n-2) =\frac{32}{59}a(k,m,n).
    \]
    Finally, we get from~\cref{lem:basic-mis-ineq}(v) that
    \[
    \begin{split}
    \mis(\Gamma) &\leq \mis(\Gamma-u-N(u)) + \mis(\Gamma-v-N(v)) + \mis(\Gamma-v;\overline u)\\
    &\leq \frac{27}{118}a(k,m,n) + \frac{27}{118}a(k,m,n) + \frac{32}{59}a(k,m,n) = a(k,m,n),
    \end{split}
    \]
    as desired.
\end{proof}

Denote by $T_{\ell}$ the graph consisting of two vertex disjoint $C_\ell$'s, $u_1 \cdots u_\ell$ and $v_1 \cdots v_\ell$, with a perfect matching $u_iv_i,1\leq i\leq \ell$ between them.

\begin{lemma}\label{lem:tl}
    Let $\ell\geq3$ and $\Gamma$ be an $n$-vertex graph. Suppose $\Gamma$ has $k$ copies of $T_\ell$'s and $m$ edges, all of which are vertex disjoint. Then,
    \[
    \mis(\Gamma) \leq b(k,m,n) := \left(\frac{31}{32}\right)^{k}2^{3(m+\ell k)-n}3^{n-2(m+\ell k)}.
    \]
\end{lemma}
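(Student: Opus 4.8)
The plan is to prove \cref{lem:tl} by induction on $k$, peeling off one copy of $T_\ell$ at a time at the cost of a single factor $31/32$. Write $a(p,q) := 2^{3p-q}3^{q-2p} = (8/9)^p(3/2)^q$, so that $b(k,m,n) = (31/32)^k\,a(m+\ell k,n)$ and $a$ is strictly decreasing in $p$; I will also use the elementary identities $a(\tilde m-1,n-1)=\tfrac34 a(\tilde m,n)$, $a(\tilde m-3,n-4)=\tfrac9{32}a(\tilde m,n)$ and $a(\tilde m-\ell,n-2\ell)=2^{-\ell}a(\tilde m,n)$. The base case $k=0$ is immediate from \cref{thm:matching}: the $m$ matching edges give $\nu(\Gamma)\ge m$, and since $3^p\le a(p,q)$ whenever $p\le q/3$, in both ranges of \cref{thm:matching} we get $\mis(\Gamma)\le a(\nu(\Gamma),n)\le a(m,n)=b(0,m,n)$. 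The only extra ingredient is the bound $\mis(T_\ell)\le\tfrac{31}{32}2^\ell$ for all $\ell\ge3$: since $T_\ell=C_\ell\,\square\,K_2$ is triangle-free it is certainly not an extremal graph of \cref{thm:biggermatchingnumber}, and a transfer-matrix count (admissible states on each rung being ``top'', ``bottom'', ``neither'', with maximality forcing every ``neither'' to be flanked by one ``top'' and one ``bottom'') shows $\mis(T_\ell)$ grows like $\phi^\ell$, so $\mis(T_\ell)/2^\ell\to 0$; together with the small-case check $\mis(T_3)=6,\ \mis(T_4)=6,\ \mis(T_5)=10,\dots$ this gives the claim with plenty of room.

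For the inductive step with $k\ge1$ I fix one copy $T=T^{(1)}$ of $T_\ell$, with cycles $u_1\cdots u_\ell$, $v_1\cdots v_\ell$ and rungs $u_iv_i$, and set $\tilde m:=m+\ell k$. Since $T$ is an \emph{induced} subgraph and $3$-regular within itself, exactly one of two things holds. \textbf{Case A: $T$ is a connected component of $\Gamma$.} Then $\Gamma-V(T)$ has $k-1$ copies of $T_\ell$, the matching $M$, and $n-2\ell$ vertices, so by the induction hypothesis and $\mis(T_\ell)\le\tfrac{31}{32}2^\ell$,
\[
\mis(\Gamma)=\mis(T_\ell)\cdot\mis(\Gamma-V(T))\le\tfrac{31}{32}2^\ell\cdot(31/32)^{k-1}\,a(\tilde m-\ell,n-2\ell)=(31/32)^k a(\tilde m,n).
\]
\textbf{Case B: some $v_1\in V(T)$ has a neighbour outside $T$, i.e.\ $\deg_\Gamma(v_1)\ge4$.} I branch on $v_1$ via \cref{lem:basic-mis-ineq}(ii),(iii): $\mis(\Gamma)\le\mis(\Gamma-v_1-N(v_1))+\mis(\Gamma-v_1)$. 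Here $T-v_1$ still contains the $\ell-1$ rungs $u_2v_2,\dots,u_\ell v_\ell$, so $\Gamma-v_1$ has $k-1$ copies of $T_\ell$ and a matching of size $m+\ell-1$, whence $\mis(\Gamma-v_1)\le b(k-1,m+\ell-1,n-1)=(31/32)^{k-1}\cdot\tfrac34\,a(\tilde m,n)$. For the other term, deleting $v_1\cup N(v_1)$ removes the four vertices $v_1,u_1,v_2,v_\ell$ of $T$ (leaving a remnant of $T$ of matching number $\ge\ell-3$) together with at least one vertex outside $T$; whether that outside neighbour is ``generic'', lies in an edge of $M$, or lies in another copy $T^{(j)}$, it only improves the induction-hypothesis bound, and a case check of these possibilities yields $\mis(\Gamma-v_1-N(v_1))\le(31/32)^{k-1}\cdot\tfrac{27}{124}\,a(\tilde m,n)$, the extreme case being one neighbour in each of the $T^{(j)}$'s, contributing a factor $\tfrac9{32}\cdot\tfrac{24}{31}$, where $\tfrac{24}{31}$ is the ratio by which demoting one $T_\ell$ to its $(\ell-1)$-edge remnant changes the target. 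Summing, $\mis(\Gamma)\le(31/32)^{k-1}\bigl(\tfrac34+\tfrac{27}{124}\bigr)a(\tilde m,n)=(31/32)^{k-1}\cdot\tfrac{30}{31}\,a(\tilde m,n)\le(31/32)^k a(\tilde m,n)$, completing the induction.

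The step I expect to require the most care is the accounting for $\mis(\Gamma-v_1-N(v_1))$ in Case~B: one must track exactly how deleting $N(v_1)$ interacts with $M$ and with the other copies of $T_\ell$ — several neighbours of $v_1$ may fall in the same $T^{(j)}$, or an edge of $M$ may have both endpoints in $N(v_1)$, and so on — and verify that in every configuration the surviving matching and surviving copies of $T_\ell$ combine to keep this term below $\tfrac7{32}(31/32)^{k-1}a(\tilde m,n)$. This is where the value $31/32=\tfrac34+\tfrac7{32}$ is forced, and it is tight enough that the identities for $a$ above must be used exactly rather than through crude estimates. A minor but essential point is that the dichotomy Case~A/Case~B is exhaustive precisely because each $T^{(i)}$ is induced, so that $N_\Gamma(v)\cap V(T^{(i)})$ is exactly the three internal neighbours of $v$; and the small cases of the sub-lemma $\mis(T_\ell)\le\tfrac{31}{32}2^\ell$ (say $\ell\le5$) are to be checked directly.
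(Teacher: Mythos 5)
Your proof takes a genuinely different route from the paper. The paper, after choosing a copy $T$ of $T_\ell$ with an edge $u_2v_2$, applies the three-way split of \cref{lem:basic-mis-ineq}(v) to that edge, bounding $\mis(\Gamma-u_2-N(u_2))$, $\mis(\Gamma-v_2-N(v_2))$ and $\mis(\Gamma-v_2;\overline{u_2})$ directly via the induction hypothesis (separating only by whether $\deg(u_2)=3$ or $\deg(u_2),\deg(v_2)\geq4$); there is no ``component'' case and no separate sub-lemma for $\mis(T_\ell)$. You instead use a two-way vertex branching $\mis(\Gamma)\leq\mis(\Gamma-v_1-N(v_1))+\mis(\Gamma-v_1)$ after splitting into Case~A (component) and Case~B (outside neighbour). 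Both approaches land the required $<31/32$ factor, and your arithmetic (the ratios $3/4$ and $27/124$ summing to $30/31$) checks out.

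There is, however, a real gap in your Case~A/Case~B dichotomy, and it matters precisely for the way the paper uses this lemma. You write ``Since $T$ is an \emph{induced} subgraph \ldots'', but the statement only asks for $\Gamma$ to \emph{contain} $k$ vertex-disjoint copies of $T_\ell$ as subgraphs, and the paper in fact invokes \cref{lem:tl} with $\Gamma=T_\ell^+$ (taking $k=1$, $m=0$) to bound $\mis(T_\ell^+)\leq 31\cdot 2^{\ell-5}$. For $\Gamma=T_\ell^+$ the copy $T$ is a connected component but is \emph{not} induced, so your Case~A identity $\mis(\Gamma)=\mis(T_\ell)\cdot\mis(\Gamma-V(T))$ is simply false (it would say $\mis(T_\ell^+)=\mis(T_\ell)$), and Case~B as stated does not apply because no vertex of $T$ has a neighbour outside $T$. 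The fix is to replace the dichotomy by ``every vertex of $T$ has degree $3$ in $\Gamma$'' (which does force $\Gamma[V(T)]=T_\ell$ and $T$ a component) versus ``some vertex $v_1$ of $T$ has $\deg_\Gamma(v_1)\geq4$'', and in the second case note that the extra neighbour $x\in N(v_1)\setminus\{u_1,v_2,v_\ell\}$ may lie inside $V(T)$; then $\mis(\Gamma-v_1-N(v_1))\leq\mis(\Gamma-\{v_1,u_1,v_2,v_\ell,x\})$ by \cref{lem:basic-mis-ineq}(i) and the chord case gives $b(k-1,m+\ell-4,n-5)=\tfrac{27}{128}(31/32)^{k-1}a(\tilde m,n)$, which is below your claimed worst case. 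A smaller issue is that the sub-lemma $\mis(T_\ell)\leq\tfrac{31}{32}2^\ell$ is only sketched (and the ``triangle-free'' remark fails for $\ell=3$); the paper sidesteps any such sub-lemma entirely, which is one advantage of its uniform three-way branching.
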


\begin{proof}
Suppose to the contrary that there is a counterexample $(\Gamma,\cT,M)$ with minimum $k = |\cT|$, where $\cT$ is the collection of $T_{\ell}$ and $M$ is the collection of vertex disjoint edges, which are also vertex disjoint from $\cT$.

By \cref{thm:matching}, we may assume that $k \geq 1$. Choose a  $T \in \cT$ to be a copy of $T_{\ell}$, and let $(u_1,u_2,\dots,u_{\ell})$ and $(v_1,v_2,\dots,v_{\ell})$ be the two $C_{\ell}$'s of $T$, with $u_i \sim v_i$ for all $1 \leq i \leq \ell$. By~\cref{lem:basic-mis-ineq}(v), we have
 $$\mathrm{mis}(\Gamma)\le \mathrm{mis}(\Gamma-v_2-N(v_2))+\mathrm{mis}(\Gamma-u_2-N(u_2))+\mathrm{mis}(\Gamma-v_2;\overline{u_2}).$$
 
    We proceed to bound each of the terms above. After deleting $u_1,u_2,u_3,v_2$, we see that $(\cT-T,M+u_4v_4+\ldots+u_{\ell}v_{\ell})$ is a disjoint union of $T_{\ell}$'s and edges in $\Gamma-u_1-u_2-u_3-v_2$, which satisfies the inequality in \Cref{lem:tl} due to the minimality of $k$. Hence by~\cref{lem:basic-mis-ineq}(i), we have
    \[
    \mis(\Gamma-u_2-N(u_2)) \leq \mis(\Gamma-u_1-u_2-u_3-v_2) \leq b(k-1,m+\ell-3,n-4) \le \frac{9}{31}b(k,m,n).
    \]
    Similarly, we have $\mis(\Gamma-v_2-N(v_2)) \leq 9b(k,m,n)/31$. 

    For $\mis(\Gamma-v_2;\overline{u_2})$, if $\deg(u_2) = 3$, from \cref{lem:basic-mis-ineq}(iv),~(ii) and~(i) and the minimality of $k$, we have
    \[
    \begin{split}
    \mis(\Gamma-v_2;\overline{u_2}) &\leq \mis(\Gamma-v_2-u_2;u_1) + \mis(\Gamma-v_2-u_2;u_3)\\
    &\leq \mis(\Gamma-v_2-u_2-u_1-v_1-u_\ell) + \mis(\Gamma-v_2-u_2-u_3-v_3-u_4)\\
    &\leq b(k-1,m+\ell-3,n-5) + b(k-1,m+\ell-3,n-5) \leq \frac{12}{31}b(k,m,n).
    \end{split}
    \]
    Putting together, we have $\mis(\Gamma) \leq 30b(k,m,n)/31$. The same method works if $\deg(v_2) = 3$.

    The remaining case is when  $\deg(u_2),\deg(v_2) \geq 4$. Let $x$ be a neighbor of $u_2$ other than $u_1,u_3,v_2$. Classifying by $x \in V(T)\cup V(M),x \in V(\cT) \setminus V(T)$ or otherwise, we can bound $\mis(\Gamma-u_1-u_2-u_3-v_2-x)$, using the minimality of $k$, in each of these cases by $b(k-1,m+\ell-4,n-5)$, $b(k-2,m+2\ell-4,n-5)$ and $b(k-1,m+\ell-3,n-5)$ respectively; among these three terms, the maximum is $b(k-2,m+2\ell-4,n-5)$. Hence, we have
    \[
    \begin{split}
     \mis(\Gamma-u_2-N(u_2)) &\leq \mis(\Gamma-u_1-u_2-u_3-v_2-x)\\
    &\leq b(k-2,m+2\ell-4,n-5)= \frac{216}{961}b(k,m,n).
    \end{split}
    \]
    Similarly, $\mis(\Gamma-v_2-N(v_2)) \leq 216b(k,m,n)/961$,  and
    \[
    \mis(\Gamma-v_2;\overline{u_2}) \leq \mis(\Gamma-u_2-v_2) \leq a(k-1,m+\ell-1,n-2) = \frac{16}{31}b(k,m,n).
    \]
    Altogether we have $\mis(\Gamma) \leq 928b(k,m,n)/961$. This completes the proof.
\end{proof}

\begin{lemma}\label{lem:c4}
    Let $\Gamma$ be an $n$-vertex graph. Suppose $\Gamma$ has $k$ induced copies of $C_4$ and $m$ edges, all of which are vertex disjoint. Then,
    \[
    \mis(\Gamma) \leq c(k,m,n) := \left(\frac{4}{7}\right)^k2^{6k+3m-n}3^{n-2m-4k}.
    \]
\end{lemma}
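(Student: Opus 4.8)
The plan is to mimic the proofs of \cref{thm:biggermatchingnumber,lem:degree4,lem:tl}: take a vertex-minimal counterexample $(\Gamma,\cC,M)$, where $\cC$ is the collection of $k$ vertex-disjoint induced $C_4$'s and $M$ a collection of $m$ edges disjoint from each other and from $\cC$, and derive a contradiction via the branching recursion in \cref{lem:basic-mis-ineq}(v). As before, \cref{thm:matching} handles $k=0$, so we may assume $k\ge 1$. Fix one induced $C_4$, say $C$ with vertices $w_1w_2w_3w_4$ in cyclic order; note each $w_i$ has degree at least $2$ and its two neighbours in $C$ are non-adjacent. The base constant is chosen so that $c(k,m,n)=(4/7)^k a(m+2k,n)$ in the notation $a(m,n)=2^{3m-n}3^{n-2m}$ of \cref{proofthmmat}: indeed a disjoint union of $t$ triangles, $s$ edges and $k$ induced $C_4$'s has $\mis=3^t2^s\cdot 7^k = a(m+2k,n)\cdot(7/4)^k\cdot\dots$; more precisely $\mis(C_4)=7$ while a ``free'' contribution of $4$ vertices and $2$ matching edges to $a$ would be $2^2=4$, explaining the factor $(4/7)^k$. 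The key arithmetic identities I will use repeatedly are $c(k-1,m,n-a)/c(k,m,n)$ and $c(k-1,m+a,n-b)/c(k,m,n)$, obtained by substituting into the definition.

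The main case analysis is on $\deg(w_1)$ and $\deg(w_3)$ (opposite vertices of $C$), applying \cref{lem:basic-mis-ineq}(v) to the edge $w_1w_2$ — wait, better to branch at a matching edge or at $C$ itself; concretely I will branch as $\mis(\Gamma)\le \mis(\Gamma-w_1-N(w_1))+\mis(\Gamma-w_3-N(w_3))+\mis(\Gamma-w_1;\overline{w_3})$ is not valid since $w_1\not\sim w_3$; instead I use $\mis(\Gamma)=\mis(\Gamma;w_1)+\mis(\Gamma;\overline{w_1})$ and on $\MIS(\Gamma;\overline{w_1})$ note every such set meets $N(w_1)\supseteq\{w_2,w_4\}$, and if it avoids both $w_2$ and $w_4$... no: it must contain \emph{some} neighbour, which could be $w_2$, $w_4$, or an outside neighbour. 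The clean approach, following \cref{lem:tl} exactly, is: pick an edge $uv\in M$ if the analysis there suffices, but since we need to exploit the $C_4$'s we branch at $C$. Deleting $w_1$ together with $N(w_1)$ removes at least $w_1,w_2,w_4$ and hence kills the $C_4$ $C$, turning $(\cC,M)$ into $(\cC-C, M)$ on $\le n-3$ vertices with the same matching count $m$ off $C$ but $C$ contributes its two matching-able edges lost; carefully $\nu$ drops by at most $\deg(w_1)$, so $\mis(\Gamma-w_1-N(w_1))\le c(k-1,m+2-\deg(w_1),n-1-\deg(w_1))$, and one checks this is $\le (4/7)\cdot(\text{small})\cdot c(k,m,n)$, decreasing in $\deg(w_1)\ge 2$. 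Combining the branch at, say, $w_1$ with the recursion and bounding $\mis(\Gamma;\overline{w_1})\le\sum_{u\in N(w_1)}\mis(\Gamma;u)$ via \cref{lem:basic-mis-ineq}(iv), together with $\mis(\Gamma-w_1-w_2)$-type estimates, yields $\mis(\Gamma)<c(k,m,n)$.

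The step I expect to be the main obstacle is organizing the degree cases for the four vertices of $C$ so that the constants add to at most $1$: the tight configuration is when $C$ really is an isolated induced $C_4$, i.e.\ all $\deg(w_i)=2$, in which case $\Gamma$ splits as $C_4\sqcup(\Gamma-V(C))$, $\Gamma-V(C)$ is a minimal counterexample unless it is type-B-with-$C_4$'s, and one gets equality forcing $\Gamma$ itself to be of the allowed form — a contradiction. When some $\deg(w_i)\ge 3$ one gains an extra factor strictly below $1$ from the corresponding deletion term, closing the gap; the bookkeeping is to verify, in each subcase (parametrized by which vertices have degree $2$ versus $\ge 3$, and whether neighbours of $C$ land in $V(M)$, in another $C_4$ of $\cC$, or outside everything), that the three terms from \cref{lem:basic-mis-ineq}(v) sum to $<c(k,m,n)$. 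This is the same flavour of finite check as in Cases 1–7 of \cref{thm:biggermatchingnumber} and the proof of \cref{lem:tl}, just with the constant $4/7$ and the $C_4$ accounting; I would present it by first recording the general deletion inequalities as a claim analogous to \cref{cl:deletevertex}, then dispatching the $\deg=2$ everywhere case (equality analysis), then the remaining cases in a short paragraph each.
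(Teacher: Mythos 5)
Your plan is to recycle the branching recursion of \cref{lem:basic-mis-ineq}(v) and (iv) that drives \cref{thm:biggermatchingnumber}, \cref{lem:degree4} and \cref{lem:tl}. This does not work for \cref{lem:c4}, and the paper does something genuinely different. The obstruction is structural, not just bookkeeping: a $C_4$ contains no triangle, so any enumeration of the form ``some vertex of $C$ is in the maximal independent set'' over a vertex cover of $C$ double-counts. Concretely, take $C$ to be an isolated induced $C_4$ $w_1w_2w_3w_4$. Each of $\mis(\Gamma-w_1-N(w_1))$, $\mis(\Gamma-w_2-N(w_2))$, $\mis(\Gamma-w_2;\overline{w_1})$ (or, in your variant, the sum $\mis(\Gamma;w_1)+\sum_{u\in N(w_1)}\mis(\Gamma;u)$) equals $\mis(\Gamma-C)$, and $\mis(\Gamma-C)\le c(k-1,m,n-4)=\tfrac{7}{16}c(k,m,n)$; so the branching yields $\mis(\Gamma)\le 3\cdot\tfrac{7}{16}c=\tfrac{21}{16}c>c$. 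You do separately note that the isolated-$C_4$ case is handled by multiplicativity, which is fine, but the branching is equally lossy when some $w_i$ has degree $3$: a direct check (e.g.\ $\deg(w_1)=3$, the other three of degree $2$, and $w_1$'s extra neighbour landing in $V(M)$) gives three terms summing to $\tfrac{63}{128}+\tfrac{7}{16}+\tfrac{21}{32}>1$, so the recursion still fails to close. Your claimed per-term bound $c(k-1,m+2-\deg(w_1),n-1-\deg(w_1))$ already equals $\tfrac{21}{32}c$ at $\deg(w_1)=2$, leaving no room for the other two terms.

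The paper circumvents this with an averaging argument over four auxiliary graphs $\Gamma_{i,j}$, $i,j\in\{1,2\}$, obtained by ``symmetrizing'' the outside neighbourhoods of the four $C_4$-vertices. The key identity is $\mis(\Gamma)\le\tfrac{2}{7}\sum_{i,j}\mis(\Gamma_{i,j})$, proved by a case analysis of how a maximal independent set meets $N(w_1),\dots,N(w_4)$ in $\Gamma-C$. One then picks the largest $\Gamma_{i,j}$, uses that its maximal independent sets biject (with multiplicity at most $2$) onto those of $\Gamma'=\Gamma-C$, and applies minimality to $\Gamma'$. This sidesteps the three-term branching entirely and is precisely where the factor $\tfrac{4}{7}$ comes from, without any degree case split. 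As a minor side note, $\mis(C_4)=2$, not $7$; the $7$ in the constant arises from the averaging step, not from counting MIS of a $C_4$.
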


\begin{proof}

    For  $U, V\subset \Gamma$, we denote by $\mis(\Gamma;U;\overline{V})$ the number of maximal independent sets $I$ satisfying  $I\cap U\ne \varnothing$ and $I\cap V=\varnothing$. We also write $\mis(\Gamma;U)=\mis(\Gamma;U;\varnothing)$,  $\mis(\Gamma;\overline{V})=\mis(\Gamma;V(\Gamma);\overline{V})$, and $\mis(\Gamma;U, W;\overline{V})$ for the number of maximal independent sets $I$ satisfying  $I\cap U\ne \varnothing, I\cap W\ne \varnothing$ and $I\cap V=\varnothing$.

    Suppose for a contradiction that there is a counterexample $(\Gamma,\cC,M)$ with minimum $k = |\cC|$, where $\cC$ is a  collection of pairwise vertex disjoint  induced $C_4$'s and $M$ is a matching, which is also vertex disjoint from $\cC$. By \cref{thm:matching}, we may assume that $k \geq 1$. Choose  $C = u_1u_2v_2v_1 \in \cC$, an   induced $C_4$, where $u_1u_2,u_2v_2,v_2v_1,v_1u_1$ are the edges.

    For each pair of $i,j\in\{1,2\}$, we construct a graph $\Gamma_{i,j}$ as follows: starting from $\Gamma$, we change the neighborhood of $u_1,u_2,v_2,v_1$ in $V(\Gamma-C)$ by letting $N_{\Gamma_{i,j}}(u_1) = N_{\Gamma_{i,j}}(u_2) = N_\Gamma(u_i)$ and $N_{\Gamma_{i,j}}(v_1) = N_{\Gamma_{i,j}}(v_2) = N_\Gamma(v_j)$. We shall prove that
    \begin{equation}\label{eq:gammaij}
    \mis(\Gamma) \leq \frac{2}{7}\sum_{i,j\in\{1,2\}}\mis(\Gamma_{i,j}).
    \end{equation}
    We show that each $I \in \MIS(\Gamma)$ corresponds to at least 3.5 maximal independent sets on the right hand side, by classifying the maximal independent sets by their intersections with $N(u_1)$, $N(u_2)$, $N(v_1)$ and $N(v_2)$. Denote $I'=I-u_1-u_2-v_1-v_2$.
    
    (i) When none of $I' \cap N(u_1),I' \cap N(u_2),I' \cap N(v_1),I' \cap N(v_2)$ is an empty set, then $I' = I$, and it is also a maximal independent set in each of $\Gamma_{i,j}$ for $i,j\in\{1,2\}$. The ratio is $1:4$.

    In the rest of the proof,  we  discuss only one case from each class of similar cases.

    (ii) When $I' \cap N(u_1) = \varnothing$ and all other intersections are non-empty, we have that $I=I'+u_1$; while $I'+u_1,I'+u_2$ are two maximal independent sets in each of  $\Gamma_{1,1}$ and $\Gamma_{1,2}$, and $I$ is a maximal independent set in $\Gamma_{2,1}$ and $\Gamma_{2,2}$. The ratio is $1:6$.

    (iii) When $I' \cap N(u_1)  = \varnothing $ and  $I' \cap N(u_2) = \varnothing$, and all other intersections are non-empty, we have that $I'+u_1,I'+u_2$ are two maximal independent sets in  $\Gamma$; while
    \[
    I'+u_1,I'+u_2 \in \MIS(\Gamma_{1,1}), \ \MIS(\Gamma_{1,2}),\  \MIS(\Gamma_{2,1}), \ \MIS(\Gamma_{2,2}),
    \]
    altogether the ratio is $2:8$.

    (iv) When $I' \cap N(u_1)=I'\cap N(v_1) = \varnothing$ and all other intersections are non-empty, we have that $I'+u_1,\ I'+v_1$ are two maximal independent sets in $\Gamma$; while
    \begin{gather*}
        I'+u_1+v_2,\ I'+u_2+v_1 \in \MIS(\Gamma_{1,1}),\quad I'+u_1,\ I'+u_2 \in \MIS(\Gamma_{1,2}),\\
        I'+v_1,\ I'+v_2 \in \MIS(\Gamma_{2,1}),\quad I' \in \MIS(\Gamma_{2,2}),
    \end{gather*}
    altogether the ratio is $2:7$.

    (v) When $I' \cap N(u_1)=I'\cap N(v_2) = \varnothing$ and all other intersections are non-empty, we have that $I=I'+u_1+v_2$; while
    \begin{gather*}
        I'+u_1+v_2, \ I'+u_2+v_1 \in \MIS(\Gamma_{1,2}),\quad I'+u_1,I'+u_2 \in \MIS(\Gamma_{1,1}),\\
        I'+v_1,\ I'+v_2 \in \MIS(\Gamma_{2,2}),\quad I' \in \MIS(\Gamma_{2,1}),
    \end{gather*}
    altogether the ratio is $1:7$.

    (vi) When $I' \cap N(u_1) \neq \varnothing$ and all other intersections are empty, we have that $I'+v_2,I'+u_2+v_1$ are two maximal independent sets in $\Gamma$; while
    \[
    I'+v_1,I'+v_2 \in \MIS(\Gamma_{1,1}),\MIS(\Gamma_{1,2}),\quad I'+u_1+v_2,I'+u_2+v_1 \in \MIS(\Gamma_{2,1}),\MIS(\Gamma_{2,2}),
    \]
    altogether the ratio is $2:8$.

    (vii) When each of the four intersections is $\varnothing$, then $I$ is also a maximal independent set in each  of $\Gamma_{i,j}$ for $i,j\in\{1,2\}$. Altogether the ratio is $1:4$.

    Combining all the above cases, \eqref{eq:gammaij} is proved, and by the pigeonhole principle there exists $i,j\in\{1,2\}$ with
    \begin{equation}\label{eq:Gammaij}
    \mis(\Gamma_{i,j}) \geq \frac{7}{8}\mis(\Gamma).
    \end{equation}

    We let  $\Gamma'=\Gamma-C$. Then, $(\cC-C,M)$ is a collection of induced $C_4$'s and edges of $\Gamma'$, which satisfies, following from the minimality of $k$ that
    \begin{equation}\label{eq:Gamma'}
    \mis(\Gamma') \leq c(k-1,m,n-4) = \frac{7}{16}c(k,m,n).
    \end{equation}

    We may partition $\MIS(\Gamma')$ into four classes, according to the intersection pattern of a maximal independent set with $N(u_i)$  and $ N(v_j)$ (where again, these neighborhoods are in $V(\Gamma)-C$ spanned by the graph $\Gamma$):
    \begin{align}
    \mis(\Gamma') = & \ \mis(\Gamma';N(u_i), N(v_j)) +  \mis(\Gamma';N(u_i);\overline{N(v_j)}) \notag \\  + &  \ \mis(\Gamma';N(v_j);\overline{N(u_i)}) + \mis(\Gamma';\overline{N(u_i) \cup N(v_j)}).
    \end{align}
    For $\MIS(\Gamma_{i,j})$ with $i,j$ as in $\eqref{eq:Gammaij}$, and the neighborhoods considered in $V(\Gamma)-C$, each independent set $I'\in \MIS(\Gamma';N(u_i), N(v_j))$ corresponds to one independent set $I'\in\MIS(\Gamma_{i,j};N(u_i),N(v_j))$. Each independent set $I'\in\MIS(\Gamma';N(u_i); \overline{N(v_j)})$ corresponds to two independent sets $I'+v_1,I'+v_2\in\MIS(\Gamma_{i,j};N(u_i); \overline{N_{\Gamma_{i,j}}(v_j)})$ and each independent set $I'\in \MIS(\Gamma';\overline{N(u_i)\cup N(v_j)})$ corresponds to two independent sets $I'+u_1+v_2,I'+u_2+v_1\in \MIS(\Gamma_{i,j};\overline{N(u_i)\cup N(v_j)})$. Therefore, we have

    \[
    \begin{split}
    \mis(\Gamma_{i,j}) =& \mis(\Gamma';N(u_i), N(v_j)) + 2\mis(\Gamma';N(u_i);\overline{N(v_j)}) \\&+ 2\mis(\Gamma';N(v_j);\overline{N(u_i)}) + 2\mis(\Gamma';\overline{N(u_i) \cup N(v_j)}).
    \end{split}
    \]
    Therefore, $\mis(\Gamma_{i,j})\leq 2\mis(\Gamma')$ and together with \eqref{eq:Gammaij} and \eqref{eq:Gamma'} it follows
    \[
    \mis(\Gamma) \leq \frac{8}{7}\mis(\Gamma_{i,j}) \leq \frac{16}{7}\mis(\Gamma') \leq c(k,m,n),
    \]
    as desired.
\end{proof}

\section{Preliminaries}\label{sec:prelim}
We list in this section some tools that will be needed.
The first is a stability result on large sum-free sets for type \ri\ groups from Green and Ruzsa \cite{2005Sum}.

\begin{lemma}[\cite{2005Sum}]\label{lem:stability}
    Suppose that $G$ is a type \ri($p$) group of order $n$, with $p = 3k + 2$. Let $A \subseteq G$ be maximal sum-free, and suppose that $|A| > (p+2)n/3(p+1)$. Then there is  a homomorphism $\varphi\colon G \to \Z_p$ such that $A = \varphi^{-1}(\{k+1,\dots,2k+1\})$.
\end{lemma}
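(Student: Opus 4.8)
The plan is to extract the structure of $A$ from Kneser's theorem applied to the sumset $A+A$, pass to a bounded quotient, use the defining property of type \ri($p$) groups to pin that quotient down to $\Z_p$, and conclude via the classification of maximum sum-free subsets of $\Z_p$. Maximality of $A$ is only used at the very end; the structural part works for any sufficiently large sum-free $A$.

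Since $A$ is sum-free, $A$ and $A+A$ are disjoint subsets of $G$, so $|A+A|\le n-|A|$. Let $H\coloneqq\{g\in G: g+(A+A)=A+A\}$ be the stabilizer of $A+A$; by Kneser's theorem $|A+A|\ge 2|A+H|-|H|\ge 2|A|-|H|$, hence $|H|\ge 3|A|-n$. The hypothesis $|A|>\tfrac{(p+2)n}{3(p+1)}$ gives $|H|>\tfrac{n}{p+1}$, so $q\coloneqq[G:H]\le p$, and of course $q\mid n$. Write $\pi\colon G\to\overline G\coloneqq G/H$ and $\overline A\coloneqq\pi(A)$. Because $A+A$ is $H$-periodic it equals $\pi^{-1}(\pi(A+A))$; if some $\overline a\in\overline A$ lay in $\pi(A+A)$ then $\pi^{-1}(\overline a)\subseteq A+A$, and since $\pi^{-1}(\overline a)$ meets $A$ this would contradict sum-freeness of $A$. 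Thus $\overline A\cap\pi(A+A)=\varnothing$, and as $\overline A+\overline A\subseteq\pi(A+A)$ we get $(\overline A+\overline A)\cap\overline A=\varnothing$: the set $\overline A$ is sum-free in $\overline G$. Moreover $A\subseteq\pi^{-1}(\overline A)$, so $|\overline A|\ge q|A|/n>\tfrac{(p+2)q}{3(p+1)}>q/3$.

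Next I would identify $\overline G$. Since $q\mid n$ and $G$ is type \ri($p$), every prime $\equiv 2\pmod 3$ dividing $q$ is at least $p$; as $q\le p$, either such a prime divides $q$ — forcing $q=p$ and $\overline G\cong\Z_p$ — or no prime $\equiv 2\pmod 3$ divides $q$, in which case $\overline G$ is of type \rii\ or \riii\ and $\mu(\overline G)\le q/3<|\overline A|$, which is impossible. So $\overline G\cong\Z_p$. Then $|\overline A|>\tfrac{(p+2)p}{3(p+1)}=\tfrac{p+1}{3}-\tfrac{1}{3(p+1)}=(k+1)-\tfrac{1}{3(p+1)}$, and since $\mu(\Z_p)=k+1$ and $|\overline A|$ is an integer we get $|\overline A|=k+1$, a maximum sum-free set. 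By Cauchy--Davenport $|\overline A+\overline A|\ge 2(k+1)-1=2k+1$, while sum-freeness gives $|\overline A+\overline A|\le p-(k+1)=2k+1$; this equality (for $p\ge 5$; the case $p=2$ is immediate) lets Vosper's theorem conclude that $\overline A$ is an arithmetic progression, and a direct check shows the only sum-free arithmetic progression of length $k+1$ in $\Z_p$ is $a^{-1}\{k+1,\dots,2k+1\}$ for some $a\in\Z_p^{\times}$. Taking $\varphi\colon G\to\Z_p$, $\varphi(x)=a\,\pi(x)$, we obtain $A\subseteq\pi^{-1}(\overline A)=\varphi^{-1}(\{k+1,\dots,2k+1\})$, which is sum-free of size $(k+1)n/p=\mu(G)$; maximality of $A$ then yields equality.

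The only substantial inputs are Kneser's theorem (for the period bound) and the classification of maximum sum-free subsets of $\Z_p$ via Vosper's theorem; the remainder is bookkeeping, and that is where the care is needed. The point to get right is the arithmetic of the threshold: $\tfrac{(p+2)n}{3(p+1)}$ is calibrated exactly so that the induced quotient set $\overline A$ has $|\overline A|>q/3$ — which eliminates type \rii\ and \riii\ quotients and hence every quotient other than $\Z_p$ — while simultaneously $|\overline A|>\mu(\Z_p)-1$ in the surviving case $\overline G\cong\Z_p$, forcing $\overline A$ to be a genuinely maximum sum-free subset of $\Z_p$ and thereby a dilate of the middle interval.
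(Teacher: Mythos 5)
The paper gives no proof of this lemma --- it is quoted from Green and Ruzsa --- so there is nothing internal to compare against; judged on its own, your argument is correct and complete. The chain Kneser $\Rightarrow$ large stabilizer $H$ $\Rightarrow$ sum-free image in $G/H$ of density $>1/3$ $\Rightarrow$ quotient must be $\Z_p$ (using the stated values of $\mu$ for type \rii\ and \riii\ groups to kill the other quotients) $\Rightarrow$ Cauchy--Davenport/Vosper to pin $\overline A$ down to a dilate of $\{k+1,\dots,2k+1\}$ $\Rightarrow$ maximality to upgrade $A\subseteq\varphi^{-1}(\{k+1,\dots,2k+1\})$ to equality, is exactly the standard Green--Ruzsa argument, and the threshold arithmetic ($|H|>n/(p+1)$, $|\overline A|>k+1-\tfrac{1}{3(p+1)}$) checks out. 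The only inputs you lean on beyond Kneser and Vosper are the Diananda--Yap/Green--Ruzsa values of $\mu$ for type \rii\ and \riii\ groups, which the paper states and which are logically independent of this lemma, so there is no circularity.
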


The following simple observation is also needed.

\begin{observation}\label{observation:solutions}
    Let $G = \Z_{2^{\alpha_1}} \oplus \ldots \oplus \Z_{2^{\alpha_r}} \oplus K$ be an even order group such that $r,\alpha_1,\dots,\alpha_r\in\N$ and $K$ is odd. Let $\varphi\colon G\to\Z_2$ be a homomorphism. Then the equation $2x = 0$ has exactly $2^r$ solutions, with at most half of them satisfying $\varphi(x) = 1$. Furthermore, if $2x' = s$ for some $s \in G$ and $\varphi(x') = 1$, then $2x = s$ has at least $2^{r-1}$ solutions with $\varphi(x) = 1$.
\end{observation}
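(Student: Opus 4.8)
The plan is to reduce everything to the structure of the $2$-torsion subgroup $T := \{x \in G : 2x = 0\}$. Since $K$ has odd order, the doubling map $x \mapsto 2x$ is an automorphism of $K$, so every element of $T$ has trivial $K$-coordinate; and within each cyclic $2$-group factor $\Z_{2^{\alpha_i}}$ the equation $2x = 0$ has exactly the two solutions $0$ and $2^{\alpha_i - 1}$. Hence $T \cong \Z_2^r$ and $|T| = 2^r$, which is the first assertion.

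Next I would restrict $\varphi$ to $T$. Then $\varphi|_T \colon T \to \Z_2$ is a group homomorphism, so $\ker(\varphi|_T)$ has index $1$ or $2$ in $T$. In particular the number of $x \in T$ with $\varphi(x) = 1$ equals $|T| - |\ker(\varphi|_T)|$, which is either $0$ (if $\varphi|_T$ is trivial) or $2^{r-1}$ (if $\varphi|_T$ is surjective); in both cases this is at most half of $2^r$, giving the second assertion.

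For the last assertion, note that the solution set of $2x = s$ is either empty or a coset $x' + T$ of $T$. Writing an arbitrary solution as $x = x' + t$ with $t \in T$ and using $\varphi(x') = 1$, we get $\varphi(x) = \varphi(x') + \varphi(t) = 1 + \varphi(t)$, so $\varphi(x) = 1$ precisely when $t \in \ker(\varphi|_T)$. As computed above $|\ker(\varphi|_T)| \in \{2^{r-1}, 2^r\}$, so in either case there are at least $2^{r-1}$ solutions with $\varphi(x) = 1$.

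I do not anticipate a genuine obstacle: the statement is a short counting exercise once one identifies $T$ and invokes the index of $\ker(\varphi|_T)$. The only point deserving an explicit line is that doubling is a bijection on the odd part $K$, which is precisely what confines all the $2$-torsion — and hence all of the solution-counting — to the $2$-primary factors.
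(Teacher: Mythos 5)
Your proof is correct and takes essentially the same route as the paper: identify the $2$-torsion subgroup $T\cong\Z_2^r$ (the paper writes it as $\spa\{x_1,\dots,x_r\}$ with $x_i$ the order-$2$ element of the $i$-th factor), then count using that the solution set of $2x=s$ is a coset $x'+T$. The only cosmetic difference is that you phrase the counting via the index of $\ker(\varphi|_T)$, while the paper uses the equivalent pairing argument $y\mapsto x+y$; both give the same bounds.
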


\begin{proof}
    Let $x_i=\left(0,\dots,0,2^{\alpha_{i}-1},0,\dots,0\right)$, where the only non-zero coordinate is the $i$-th coordinate, for $1 \leq i \leq r$. Then, $S_0 \coloneqq \spa\{x_1,\dots,x_r\}$ is the set of solutions to $2x = 0$, which has size $2^r$.

    If $\varphi(S_0) = \{0\}$, then all the solutions satisfy $\varphi(x) = 0$. Otherwise, there exists a nonzero $x \in S_0$ with $\varphi(x) = 1$. For each $y \in S_0$ with $\varphi(y) = 1$, we have $2(x+y) = 2x+2y = 0$ and hence $x+y \in S_0$. Moreover, $\varphi(x+y) = \varphi(x)+\varphi(y) = 0$. Hence at most half of the elements in $S_0$ satisfy $\varphi(x) = 1$.

    Now suppose $2x' = s$ with $\varphi(x') = 1$. Then, $S' \coloneqq x'+S_0$ is the set of solutions to $2x = s$, which also has size $2^r$. Moreover, for each $y \in S_0$ with $\varphi(y) = 0$, we have $x'+y \in S'$ and $\varphi(x'+y) = 1$. Hence at least half of the elements in $S'$ satisfy $\varphi(x) = 1$.
\end{proof}

\subsection{Containers and link graphs}
The analysis of $f_{\max}(G)$ and $f^*_{\max}(G)$ involves the application of the hypergraph container method. The following lemma presented in \cite{hassler2025notessumfreesetsAbelian} is an analogue of a result of Green and Ruzsa \cite{2005Sum}. The hypergraph container method~\cite{BMS, ST} gives better quantitative bounds, but they are not needed for our proofs. 

\begin{lemma}[\cite{2005Sum},\cite{hassler2025notessumfreesetsAbelian}]\label{lem:container}
    For every $\delta > 0$ there exists an $n_0 \in \N$ such that the following holds. If $G$ is an Abelian group of order $n \geq n_0$, then there is a container family $\cF$ of subsets of $G$ with the following properties.
   
        (i) $|\cF| \leq 2^{\delta n}$.
        
        (ii) If $I \subseteq G$ is distinct sum-free, then $I$ is contained in some $F \in \cF$.
        
        (iii) Every $F \in \cF$ is of the form $F = A \cup B$, where $A$ is maximal sum-free and $|B| \leq \delta n$.
\end{lemma}

Note that every sum-free set is distinct sum-free, 
hence a sum-free set  $I$ is also contained in some $F \in \mathcal{F}$.

\begin{defn}\label{def:link}
    For disjoint subsets $A,S \subseteq G$, let $L_S[A]$ be the \textit{link graph of $S$ on $A$} defined as follows. Its vertex set is $A$ and its edge set consists of the following edges:
    
    (i) two distinct vertices $x,y \in A$ are adjacent if there exists $s \in S$ such that $\{x,y,s\}$ is a Schur triple;
        
    (ii) there is a loop at a vertex $x \in A$ if there exist distinct $s,t\in S$ such that $\{x,s,t\}$ is a Schur triple.

(iii) there is a loop at a vertex $x \in A$ if there exist  $s\in S$ such that $\{x,x,s\}$ or $\{x,s,s\}$ is a Schur triple.
\end{defn}

Note that although \cref{thm:matching} is stated for simple graphs, it also works on graphs with loops, by simply deleting the vertices having loops.

We also need the \textit{distinct link graph} $L_S^*[A]$ defined as in~\cref{def:link} but using only edges from (i) and (ii). 

\begin{lemma}\label{lem:distinct-more}
 For any disjoint subsets $A,S$ in an Abelian group $G$, 
 $$\mis(L_S[A]) \leq \mis\left(L_S^*[A]\right).$$
\end{lemma}
\begin{proof}
    Write $\Gamma=L_S[A]$ and $\Gamma^*=L_S^*[A]$. Note that if $(x,y,s)$ is a Schur triple with $x \neq y \in A$ and $s \in S$, then it is a distinct triple since $s \notin A$. Hence, the only difference  between $\Gamma$  and $\Gamma^*$ is that $\Gamma$ might have  some  loops of type (iii) in \cref{def:link}.

    Let $X$ be the set of vertices with loops in $\Gamma$. Then by \cref{lem:basic-mis-ineq}(i), we have
    \[
    \mis(\Gamma) = \mis(\Gamma-X) = \mis\left(\Gamma^*-X\right) \leq \mis\left(\Gamma^*\right),
    \]
    completing the proof.
\end{proof}

The following lemma reduces the problem of bounding the number of maximal sum-free sets to a problem on maximal independent sets in the link graph. For subsets $S\subseteq F$ of a group, we use the symbol $\MSF(F)$ ($\MSF^*(F)$) for the set of the maximal (distinct) sum-free sets contained in $F$, and $\MSF(F;S)$ ($\MSF^*(F;S)$) for the family of such sets containing $S$. Denote by $\msf(\cdot)$  the cardinality of  $\MSF(\cdot)$.

\begin{lemma}\label{lem:mis-reduction}
    For every $\alpha,\delta > 0$ there exists an $n_0 \in \N$ such that the following holds. Let $G$ be an Abelian group of order $n \geq n_0$. If for each pair of $A,S\subseteq G$ such that $A$ is maximal (distinct) sum-free, $S$ is (distinct) sum-free and $A \cap S = \varnothing$, 
    $$\mis(L_S^{(*)}[A]) \leq 2^{\alpha n},$$
    then 
    $$f_{\max}^{(*)}(G)\le 2^{\alpha n+2\delta n}.$$
\end{lemma}

\begin{proof}
Let us first bound $f_{\max}(G)$. Apply \cref{lem:container} to $G$ with $\delta$ to obtain a family $\cF$. Then, by~\cref{lem:container}(ii) and (iii), we have
    \[
    \MSF(G) = \bigcup_{F\in\cF}\MSF(F) \subseteq  \bigcup_{F\in\cF}\bigcup_{\substack{S\subseteq B\\S\ \text{sum-free}}}\MSF(A\cup S;S).
    \]
    Indeed, every maximal sum-free set $I$ contained in $F=A\cup B$ can be constructed by first choosing a sum-free $S\subseteq B$ and then extending $S$ in $A$ to a maximal one.

    Now, fix $F=A\cup B\in\cF$ and a sum-free $S\subseteq B$. By~\cref{lem:container}(i) and (iii), there are at most $2^{2\delta n}$ such choices. To prove $f_{\max}(G)\le 2^{\alpha n+2\delta n}$, it suffices to bound $\msf(A\cup S; S)$ by $2^{\alpha n}$. 

    To upper bound $\msf(A\cup S; S)$, construct the link graph $\Gamma = L_S[A]$. Notice that each maximal sum-free set in $\MSF(A\cup S;S)$ corresponds to a maximal independent set in $\Gamma$. Thus, by assumption
    \[
    \msf(A\cup S;S) \le  \mis\left(\Gamma\right)\le 2^{\alpha n}.
    \]
    The proof for $f_{\max}^{*}(G)$ is almost identical by considering instead the distinct link graph $L_S^{*}[A]$.
    \end{proof}


\section{Infinitely many groups with intermediate growth}\label{sec:growth}

In this section, we prove~\cref{thm:growth} with the following family of Abelian groups of  type \ri(23). Note that by \cref{thm:mu}(i), we have below $\mu(G)=8n/23$, and one can check that the ``$\ll$" on both sides are valid.

\begin{theorem}\label{thm:intermediate-growth}
    For each prime $p \geq 29$, let $G = \Z_3^2 \oplus \Z_{23} \oplus \Z_p$ and $n = |G| = 207p$.  Then one has
    \[
    2^{\mu(G)/2} \ll 3^{n/9} \leq f_{\max}(G) \leq 3^{25n/216+o(n)} \ll  3^{\mu(G)/3}.
    \]
\end{theorem}

\begin{proof}
Let $\delta>0$. We shall prove that $3^{n/9}\le f_{\max}(G)\le 3^{25n/216+2\delta n}$. 

Let us first establish the lower bound.  Let $s= (0,1,0,0)$. We shall make use of subsets of the set $A = \{1\} \oplus \Z_3 \oplus \Z_{23} \oplus \Z_p$. For each function $f\colon\Z_{23} \oplus \Z_p \to \Z_3$, denote $A_f = \{(1,f(x),x):x \in \Z_{23} \oplus \Z_p\} \cup \{s\}$. It is not hard to see that each $A_f$ is sum-free, and can be extended to a maximal sum-free set. Moreover, for functions $f \neq g$, $A_f \cup A_g$ is not sum-free. Indeed, without loss of generality we may assume that $g(x) = f(x) + 1$ for some $x \in \Z_{23} \oplus \Z_p$, then $(1,f(x),x),(1,f(x)+1,x),s \in A_f\cup A_g$, with $(1,f(x),x)+s=(1,f(x)+1,x)$. Hence $A_f$ and $A_g$ cannot extend to the same maximal sum-free set. Consequently, $f_{\max}(G)$ is at least the  number of such functions $f$, which is $3^{n/9}$.

\smallskip

To obtain the upper bound, fix $A,S\subseteq G$ such that $A$ is maximal sum-free, $S$ is sum-free and $A \cap S = \varnothing$. Construct the link graph $\Gamma = L_S[A]$. Then by~\cref{lem:mis-reduction}, it suffices to prove that 
$$\mis(\Gamma) \leq 3^{25n/216}.$$

    If $|A| \leq 25n/72$, then Moon-Moser Theorem tells us that $\mis(\Gamma) \leq 3^{25n/216}$, as desired.

    We may then assume that $|A| > 25n/72$. Apply \cref{lem:stability} to find a homomorphism $\varphi\colon G \to \Z_{23}$ with $A = \varphi^{-1}(M)$, where 
    $$M=\{8,9,10,11,12,13,14,15\}\subseteq \Z_{23}.$$ 
    Thus, $|A|=8n/23$. If $S$ is an  empty set, then $\Gamma$ is an edgeless graph and $\mis(\Gamma) = 1$, as desired. Otherwise, choose some $s \in  S$.

    If $\varphi(s) = 0$, then as $0-M=M$ in $\Z_{23}$, we see that for each $x \in A$, $s-x$ is also in $A$. Moreover, since $0\notin 2M$, there is no solution to $2x = s$ in $A$. Hence $\Gamma$ contains a perfect matching that matches $x$ and $s-x$, then by~\cref{thm:matching}, we have
    \[
    \mis(\Gamma) \leq 2^{|A|/2} = 2^{4n/23} < 3^{25n/216}.
    \]

    If $\varphi(s) \in \pm\{1,2,3,4\}$, then  for each $x \in A$, at least one of $\{x+s,x-s\}$ is in $A$. Denote
    \[
    P_s(x) = A \cap \{x,x+s,\dots,x+22s\}.
    \]
    Notice that as $23$ is a prime, $A$ has the decomposition
    \[
    A = \bigsqcup_{\varphi(x)=8}P_s(x).
    \]
    We construct a large matching as follows. For each $x$ with $\varphi(x) = 8$, the set $P_s(x)$ contains a number of vertex disjoint paths of the form $\{x+a\cdot s,\dots, x+b\cdot s\}$ with $a,b\in\Z_{23}$ in $L_S[A]$, and since at least one of $\{x-s,x+s\}$ is in $A$, we see that $a\neq b$, i.e.,~each path has length at least $1$. As $P_s(x)$ has 8 vertices and contains a linear forest with no isolated vertex, it has a matching of size $3$. Therefore, $\nu(\Gamma) \geq 3|A|/8$, and by \cref{thm:matching},
    \[
    \mis(\Gamma) \leq 2^{|A|/8}3^{|A|/4} = 2^{n/23}3^{2n/23} < 3^{25n/216}.
    \]
    
    We may now assume that for every $s\in S$, $\varphi(s) \in \pm\{5,6,7\}$. For each $x \in A$ with $\varphi(x) \in \{11,12\}$, we claim that $x+s,x-s,s-x$ are not in $A$.  Indeed, recall that $A=\varphi^{-1}(M)$, while for any $s\in S$,
    \[
    \varphi(x+s),\varphi(x-s),\varphi(s-x) \in \{4,5,6,7,16,17,18,19\}\subseteq \Z_{23}\setminus M.
    \]
    Hence $x$ is isolated in $\Gamma$, and by the Moon-Moser Theorem,
    \[
    \mis(\Gamma) \leq 3^{(3|A|/4)/3} = 3^{2n/23} < 3^{25n/216}.
    \]
    This completes the proof.
\end{proof}

\section{Maximal distinct sum-free sets}\label{sec:distinct}

In this section we establish an upper bound for $f_{\max}(G)$ and $f^*_{\max}(G)$ for even-order groups. \cref{thm:even-distinct} follows from the following theorem and~\cref{prop:counter1,prop:counter2}.

\begin{theorem}\label{thm:even*}
    Let $G$ be an even-order $n$-element group, which is not $\Z_2^k \oplus \Z_3$ for any integer $k$. Then
    \[
    f_{\max}(G),f^*_{\max}(G) \leq 2^{n/4 + o(n)}.
    \]
\end{theorem}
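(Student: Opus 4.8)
\textbf{Proof proposal for \cref{thm:even*}.}

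The plan is to combine the container lemma (\cref{lem:container}) with the matching bound (\cref{thm:matching}) applied to the link graph $L_S[A]$, exactly in the style of the proof of \cref{thm:intermediate-growth}, but now exploiting that even-order groups are type \ri(2) and hence have $\mu(G) = n/2$, so that the target $2^{n/4+o(n)}$ is precisely $2^{\mu(G)/2+o(n)}$. Fix a small $\delta > 0$, apply \cref{lem:container} to get the family $\cF$ with $|\cF| \le 2^{\delta n}$, and write
\[
\MSF^*(G) \subseteq \bigcup_{F\in\cF}\bigcup_{\substack{S\subseteq B\\ S\ \text{distinct sum-free}}}\MSF^*(A\cup S;S),
\]
so that it suffices to bound $\mathrm{msf}^*(A\cup S;S)$ by $2^{n/4}$ for each container $F = A\cup B$ with $|B|\le \delta n$ and each distinct sum-free $S\subseteq B$ (the outer union costs only a factor $2^{2\delta n}$). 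As before, $\mathrm{msf}^*(A\cup S;S) \le \mathrm{mis}(L_S[A])$ (with the appropriate loop convention: vertices with loops in the link graph are forced out of every independent set). Since $|A|\le n/2 + \delta n$ by \cref{lem:stability} (or trivially, since $A$ is maximal sum-free and $\mu(G)=n/2$), if $|A|\le n/2$ the naive Moon--Moser bound already gives $3^{|A|/3}$, which is \emph{not} good enough; the crux is therefore to show that the link graph $\Gamma = L_S[A]$ has a near-perfect matching whenever $A$ is large, so that \cref{thm:matching} upgrades the bound from $3^{|A|/3}$ to roughly $2^{|A|/2}$.

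The structural input is the type \ri(2) stability (\cref{lem:stability} with $p=2$): if $|A| > (2+2)n/(3\cdot 3) = 4n/9$, then $A = \varphi^{-1}(\{1\})$ for some homomorphism $\varphi\colon G\to\Z_2$, i.e.\ $A$ is a coset of an index-$2$ subgroup $H$. So assume $|A| = n/2$ and $A = g_0 + H$. For any $s\in S$: if $\varphi(s) = 0$, i.e.\ $s\in H$, then the map $x\mapsto 2g_0 - x$... wait, we want $x+y = s$ or $x - y = \pm s$ with $x,y\in A$; since $A$ is a coset of $H$, $A + A$ is a coset of $H$ and $A - A = H$. We look for Schur triples inside $A\cup S$ of the shape $\{x,y,s\}$ with $x+y=s$, which requires $s\in A+A = 2g_0+H$; or of shape $\{x,s,y\}$ with $x+s = y$, i.e.\ $x,y\in A$ differ by $s$, which requires $s\in A-A = H$. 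Thus if $s\in H\setminus\{0\}$, the translation $x\mapsto x+s$ is a fixed-point-free involution on $A$ whose orbits of size $2$ give a perfect matching of $\Gamma$ (after discarding the $x$ with $2x = $ something forced by loops, which is $o(n)$ many or none); then $\mathrm{mis}(\Gamma)\le 2^{|A|/2} = 2^{n/4}$, done. If instead $\varphi(s) = 1$, i.e.\ $s\in A$, then $s\in A+A$ iff $2g_0 + H = 2s + H$... $s\in g_0+H$ so $2s\in 2g_0 + H$, yes, so $s\in A+A$ always, and there are many pairs $x,y\in A$ with $x+y=s$; the equation $x+y=s$ with $x,y\in A$, $x\ne y$ has solution set forming a large matching on $A$ (pairing $x$ with $s-x$, which lies in $A$ since $s-x\in 2g_0 - g_0 + H = g_0+H = A$), again of size $\ge (|A|-O(1))/2$, so $\mathrm{mis}(\Gamma)\le 2^{(n/2)/2 + o(n)} = 2^{n/4+o(n)}$.

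The remaining case is $S = \varnothing$, where $\Gamma$ is edgeless and $\mathrm{mis}(\Gamma) = 1$; and the case $4n/9 < |A| \le n/2$ but stability does not apply (impossible by \cref{lem:stability}, whose threshold is $4n/9$), and the case $|A|\le 4n/9$, where Moon--Moser gives $3^{|A|/3}\le 3^{4n/27}$, and $3^{4n/27} = 2^{(4\log_2 3/27)n} < 2^{n/4}$ since $4\log_2 3/27 \approx 0.2348 < 0.25$. That clears all cases and yields $f_{\max}(G), f^*_{\max}(G)\le 2^{2\delta n}\cdot 2^{n/4+o(n)}\cdot 2^{\delta n} = 2^{n/4+o(n)}$ after letting $\delta\to 0$. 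The one subtlety to be careful about — and where the hypothesis $G\ne \Z_2^k\oplus\Z_3$ enters — is that in the borderline regime $|A|=n/2$ with $s\in A$, one must verify the matching $x\mapsto s-x$ really has linear size and is not destroyed by loops (triples $\{x,x,s\}$ or $\{x,s,s\}$, i.e.\ $2x=s$ or $2s=x$); in a group with no $2$-torsion issues this is fine, but when the $2$-part is small relative to a $\Z_3$ factor the link graph can instead look like a disjoint union of triangles, which is exactly the exceptional behaviour of $\Z_2^k\oplus\Z_3$ that forces the $3^{n/3}$ growth there. So the main obstacle is a careful case analysis of the link graph structure for each possible value of $\varphi(s)$ together with ruling out that $\Gamma$ degenerates into many triangles, and this is precisely where the excluded group must be set aside.
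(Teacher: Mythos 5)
Your high-level plan (containers, reduce to $\mathrm{mis}$ of the link graph, stability to land on $A=\varphi^{-1}(1)$) matches the paper, but the heart of the argument --- how to bound $\mathrm{mis}(L_S[A])$ once $|A|=n/2$ --- is wrong in the crucial case and skips most of the actual work.

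First, a vacuous case: since $S\cap A=\varnothing$ and $A=\varphi^{-1}(1)$, every $s\in S$ satisfies $\varphi(s)=0$; your case ``$\varphi(s)=1$, i.e.\ $s\in A$'' cannot occur.

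Second, and this is the genuine gap, for $\varphi(s)=0$ you assert that ``the translation $x\mapsto x+s$ is a fixed-point-free involution on $A$.'' Translation by $s$ is an involution only when $2s=0$, i.e.\ $|s|=2$; the paper uses it only in that subcase. For $|s|\ge 3$ there is no such matching from translation. The reflection $x\mapsto s-x$ (which you invoke elsewhere) \emph{is} an involution, but it can have up to $2^{r-1}$ fixed points (solutions of $2x=s$ in $A$; cf.\ \cref{observation:solutions}), and this is $\Theta(n)$ precisely when the odd part of $G$ is bounded --- for example $G=\Z_2^k\oplus\Z_m$ with $m$ fixed and odd. Plugging a matching of size $m_0=n/4-f/2$ into \cref{thm:matching} gives
\[
\mathrm{mis}(\Gamma)\le 2^{3m_0-n/2}3^{n/2-2m_0}=2^{n/4}\left(\tfrac{3}{2\sqrt 2}\right)^f,
\]
which is \emph{exponentially larger} than $2^{n/4}$ when $f=\Omega(n)$. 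So the matching argument alone cannot close these cases, and they are exactly the ones where the group structure matters.

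What the paper does instead for $|s|\ge 3$ (and $S\subseteq\{s,-s\}$) is a component-level analysis: each component of $\Gamma^*=L^*_S[A]$ is a cycle $C_{|s|}$, a prism $T_{|s|}$, or a crossed prism $T_{|s|}^+$, and one bounds $\mathrm{mis}$ per component. For $|s|\ge 4$ this uses \cref{lem:tl}; for $|s|=3$ one needs to control the number of $C_3$-components, which is exactly where $G\ne\Z_2^k\oplus\Z_3$ enters via \cref{observation:solutions} (it guarantees $n/2^r\ge 6$, hence at most $n/12$ triangles). You gesture at ``the link graph can look like a disjoint union of triangles'' but have no quantitative replacement. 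Finally, your proposal never treats $S\not\subseteq\{s,-s\}$; the paper handles this by producing linearly many disjoint sets $V_x$ in which $M$-vertices have degree $\ge 4$, and then applies \cref{lem:degree4}. Without that case and without a correct argument for $|s|\ge 3$, the proof does not go through.
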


\begin{proof}
    We first present the proof for $f^*_{\max}(G)$. 
    We consider the distinct link graph $\Gamma^*:=L_S^*[A]$. Since $A$ is sum-free, we have $|A| \leq \mu(G) = n/2$. By~\cref{lem:mis-reduction}, it suffices to prove that for\ each pair of $A$ and $S$ such that $A$ is maximal distinct sum-free, $S$ is distinct sum-free and $A \cap S = \varnothing$, one has
    \[
    \mis(\Gamma^*) \leq 2^{n/4}.
    \]
    If $|A| \leq 4n/9$, then by the Moon-Moser Theorem, $\mis\left(\Gamma^*\right) \leq 3^{4n/27} < 2^{n/4}$, as desired.

    We may then assume that $|A| > 4n/9$. Apply \cref{lem:stability} to find a homomorphism $\varphi\colon G \to \Z_2$ with $A = \varphi^{-1}(1)$, and thus $|A| = n/2$ and  $S \subseteq \varphi^{-1}(0)$. If $S$ is empty, then $\Gamma^*$ is an edgeless graph and thus $\mis\left(\Gamma^*\right) = 1$, as desired. We now assume that $S$ is non-empty.

    If $S=\{0\}$, then $\Gamma^*$ consists of a matching and some isolated vertices, which also gives $\mis\left(\Gamma^*\right) \leq 2^{|A|/2} = 2^{n/4}$. Otherwise, pick $0 \neq s \in  S$ with minimal order.

    If $s$ has order $|s| = 2$, then $\Gamma^*$ admits a perfect matching $\{x,x+s\}_{x\in A}$. Directly applying \cref{thm:matching} we obtain $\mis\left(\Gamma^*\right) \leq 2^{|A|/2}$.

    Now we suppose that all elements in $S$ have order at least 3, and we first discuss the cases when $S\subseteq\{s,-s\}$. 

    Recall that $T_{\ell}$ is the graph consisting of two vertex disjoint $C_\ell$'s and a perfect matching between them. We denote by $T_{\ell}^{+}$ the graph consisting of two vertex disjoint $C_\ell$'s, $u_1 \cdots u_\ell$ and $v_1 \cdots v_\ell$, with two perfect matchings $u_iv_{\ell-i+1},u_iv_{\ell-i+3},1\leq i\leq \ell$ between them. Here, $i+1 = 1$ for $i=\ell$. If $-x \in \{x,x+s,\dots,x+(|s|-1)s\}$, then the component containing $x$ in $\Gamma^*$ is $\{x,x+s,\dots,x+(|s|-1)s\}$, constituting a $C_{|s|}$. If not, then the component is a $T_{|s|}$ or $T_{|s|}^+$, depending on $S=\{s\}$ or $S=\{s,-s\}$. Note that $T^+_{|s|}$ contains a copy of $T_{|s|}$ (simply reordering the vertices as $v_{\ell-i+1}\to v_i$).

    When $|s| = 3$, we count the $C_3$ components in $\Gamma^*$. Each $C_3$ component is of the form $\{x, x+s,x+2s\}$ with $-x\in \{x, x+s,x+2s\}$, implying  that there is a $k \in \Z_3$ with $-x = x + ks$. It follows that $2(x+(k/2)s) = 0$. Hence each $C_3$ component contains a solution to $2y = 0$. On the other hand, we know that $3$ divides $n/2^r$ as $G$ has an order $3$ element. Since the group is not $\Z_2^k \oplus \Z_3$, we know that $n/2^r \geq 6$ in the setting of~\cref{observation:solutions}. Hence $2y = 0$ has at most $n/12$ solutions in $A$, and the number of $C_3$ components in $\Gamma^*$ is at most $n/12$. We know that $\mis(T_3) = 6$, $\mis(T_3^+) = 3$, and $\mis(C_3) = 3$. Therefore,
    \begin{equation}\label{dis41}
    \mis\left(\Gamma^*\right) \leq 3^{n/12}6^{(|A|-n/4)/6} = 3^{n/12}6^{n/24} < 2^{n/4}.
    \end{equation}
    
    For $|s| \geq 4$, it is known from \cite[Example 1.2]{1987MISFuredi} that $\mis(C_{|s|}) < 1.4^{|s|} < 2^{|s|/2}$. Recall that $T_{|s|}\subseteq T^+_{|s|}$, so \cref{lem:tl} is valid for both $T_{|s|}$ and $T_{|s|}^+$ and it (applied with $k=1$, $m=0$, $n=2s$ and $\ell=s$) yields that both $\mis(T_{|s|})$ and $\mis(T_{|s|}^+)$ are at most $\frac{31}{32} \cdot 2^{|s|} < 2^{|s|}$. Hence $\mis\left(\Gamma^*\right) < 2^{|A|/2}$, as desired.

    Now we proceed to handle the case when $S \nsubseteq \{s,-s\}$.

    Let $t \in S$ with $t \notin \{s,-s\}$. For each $x$, let $V_x = \{x+is+jt\colon 0 \leq i \leq |s|-1,0 \leq j \leq |t|-1\}$. Then for each $x \in A$, $x+s,x+t,x-s,x-t$ are four distinct neighbors of $x$ in $V_x$. Furthermore, we know that
    \[
    A = \bigcup_{x \in A}V_x,
    \]
    where for each pair $x,y \in A$, either $V_x = V_y$ or $V_x \cap V_y = \varnothing$.
    
    Let $\ell = |V_x|$ (we know that all $V_x$ have the same size). Then $A = V(\Gamma)$ is partitioned into $n/2\ell$ disjoint vertex subsets $V_x$. We find in each $V_x$ an almost perfect matching (i.e., a matching of size $\lfloor\ell/2\rfloor$). Indeed, if $|s|$ or $|t|$ is even (without loss of generality suppose $|s|$ is even), then for each $0 \leq j \leq |t|-1$ and odd $0 \leq i \leq |s|-1$, we match $x+is+jt$ with $x+(i-1)s+jt$, which results in a perfect matching. If both $|s|$ and $|t|$ are odd, then for each $0 \leq j \leq |t|-1$ and odd $0 \leq i \leq |s|-1$, we match $x+is+jt$ with $x+(i-1)s+jt$; for each odd $0 \leq j \leq |t|-1$ and $i = |s|-1$, we match $x+is+jt$ with $x+is+(j-1)t$. Then all vertices are matched except $x+(|s|-1)s+(|t|-1)t$. Hence the matching has size $(\ell-1)/2$, which is almost perfect.
    
    Let $M$ be the matching of $\Gamma$ consisting of such matchings of $V_x$, then $M$ has size at least $n(\ell-1)/4\ell$. \cref{lem:degree4} gives that
    \[
    \mis\left(\Gamma^*\right) \leq 59^{n/2\ell}2^{n(\ell-15)/4\ell}3^{n/2\ell} < 2^{n/4}.
    \]
    This completes the proof for $f_{\max}^*(G)$.
    
    For $f_{\max}(G)$, we analogously consider the link graph $L_S[A]$, and the desired bound follows from~\cref{lem:distinct-more}. 
\end{proof}

\begin{remark}
    For the distinct sum-free sets, the lower bound was known in \cite{hassler2025notessumfreesetsAbelian} that $f^*_{\max}(G) \geq 2^{(n-2)/4}$.
\end{remark}

\subsection{Counterexamples}

\begin{proposition}\label{prop:counter1}
    Let $k\in\mathbb{N}$ and $n = 3 \cdot 2^k$. Then $f^*_{\max}\left(\Z_2^k \oplus \Z_3\right) = 3^{n/6+o(n)}$.
\end{proposition}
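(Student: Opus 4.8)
## Proof proposal for Proposition~\ref{prop:counter1}

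The plan is to exhibit a large family of pairwise-incompatible distinct sum-free sets in $G = \Z_2^k \oplus \Z_3$ for the lower bound, and then to match it using the container machinery (\cref{lem:container}) combined with an analysis of link graphs, paralleling the proof of \cref{thm:intermediate-growth}. Write $n = |G| = 3\cdot 2^k$ and recall $\mu(G) = n/3$ since $G$ is type \rii\ (indeed $3\mid n$ and no prime $\equiv 2\pmod 3$ divides $n$, as $n$ is a power of $2$ times $3$); so $2^{n/6} = 2^{\mu(G)/2}$ and $3^{n/6} = 3^{\mu(G)/3}$, and the claim is that $f^*_{\max}$ achieves the \emph{maximum} possible growth $3^{1/3}$ here. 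For the lower bound, I would take $A = \Z_2^k \oplus \{1,2\}\subseteq G$, i.e.\ the preimage under the projection $\psi\colon G\to\Z_3$ of $\{1,2\}$. For any function $f\colon\Z_2^k\to\{1,2\}$, set $A_f = \{(v,f(v)) : v\in\Z_2^k\}$; one checks $A_f$ is distinct sum-free because any distinct Schur triple inside $A_f$ would project to a distinct Schur triple in $\{1,2\}\subseteq\Z_3$, and $\{1,2\}$ has none (the only distinct-sum relation in $\Z_3$ would need three distinct residues). Each $A_f$ extends to a maximal distinct sum-free set. Moreover, for $f\ne g$ differing at some $v$, the set $A_f\cup A_g$ contains $(v,1),(v,2)$ and — using some auxiliary element of the form $(w,0)$ or by a direct check that $\{(v,1),(v,2),(0,0)\}$ or a translate is a distinct Schur triple (note $1+2 = 0$ in $\Z_3$ and the $\Z_2^k$-coordinates can be arranged to be distinct when $k\ge 1$) — is not distinct sum-free, so $A_f$ and $A_g$ extend to different maximal sets. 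This gives $f^*_{\max}(G)\ge 2^{2^k} = 2^{n/3} = 3^{(\log_3 2)n/3}$; but to reach $3^{n/6}$ one instead uses the richer family where $A = \psi^{-1}(\{0,1,2\})$ restricted cleverly — more precisely, partition is not needed: take the full preimage structure so that each of the $2^k$ fibers (a $\Z_3$) can be independently assigned one of $3$ ``shifts'' of $\{1,2\}$, say, or more carefully count maximal distinct sum-free sets directly inside a single maximal sum-free set $A_0 = \psi^{-1}(\{1,2\})$ of size $2^{k+1}$: its distinct link graph on no forbidden sum is essentially a perfect matching on the $2^k$ fiber-pairs $\{(v,1),(v,2)\}$, giving $2^{2^k}$; this still is only $2^{n/3}$. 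To actually obtain $3^{n/6}$ one must exploit that $\mu(G) = n/3$ and a \emph{maximal} sum-free set of size $n/3$ in $\Z_2^k\oplus\Z_3$ can be taken as $\psi^{-1}(\{1\})\cup(\text{something})$, and the relevant link graph becomes a disjoint union of triangles: here is the key point, that distinct Schur triples $x+y=z$ \emph{within a fiber} $\{(v,0),(v,1),(v,2)\}$ require distinct coordinates and $\{0,1,2\}$ \emph{does} satisfy $1+2 = 0$ etc., so inside each fiber the three elements form a Schur-triangle — hence the distinct link graph of the appropriate $(A,S)$ is a disjoint union of $2^k$ triangles, whose number of maximal independent sets is $3^{2^k} = 3^{n/6}$.

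For the upper bound, I would invoke \cref{lem:container} with a small $\delta$: every maximal distinct sum-free set lies in some $F = A\cup B\in\cF$ with $|\cF|\le 2^{\delta n}$, $A$ maximal sum-free, $|B|\le\delta n$. As in the proof of \cref{thm:intermediate-growth}, $\MSF^*(G)\subseteq\bigcup_{F}\bigcup_{S\subseteq B\ \text{distinct sum-free}}\MSF^*(A\cup S;S)$, and it suffices to bound $\msf^*(A\cup S;S)\le\mis(\Gamma^*)$ where $\Gamma^* = L_S^*[A]$, by $3^{n/6+o(n)}$ uniformly. Since $|A|\le\mu(G) = n/3$, we have $|V(\Gamma^*)|\le n/3$, so by the Moon--Moser theorem $\mis(\Gamma^*)\le 3^{|A|/3}\le 3^{n/9}$ — wait, that's smaller than $3^{n/6}$, so actually the upper bound $f^*_{\max}(G)\le 3^{n/9+o(n)}$ holds trivially, and this would \emph{contradict} the claimed $3^{n/6}$. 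Hence the resolution must be that $\mu(G)\ne n/3$ here; let me recompute: in $\Z_2^k\oplus\Z_3$, is it type \rii\ or type \ri(2)? Since $2\equiv 2\pmod 3$ and $2\mid n$, $G$ is type \ri(2), giving $\mu(G) = (1/3 + 1/6)n = n/2$. So $|A|\le n/2$, Moon--Moser gives $\mis(\Gamma^*)\le 3^{n/6}$, matching the lower bound exactly, and $\mu(G)/3 = n/6$, consistent with the statement $f^*_{\max} = 3^{(1+o(1))\mu(G)/3} = 3^{n/6+o(n)}$. Good — so the upper bound is simply Moon--Moser applied to the link graph, with the container lemma absorbing the $2^{\delta n}$ factors into $o(n)$; no stability input is needed.

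The main obstacle, and the place I expect to spend real effort, is the lower bound: pinning down a maximal sum-free set $A\subseteq\Z_2^k\oplus\Z_3$ of size $n/2$ together with a single element $s\in G\setminus A$ (to play the role of $S$) such that the link graph $L_S[A]$ is exactly a disjoint union of $2^{k-1}$ triangles (so $\mis = 3^{2^{k-1}} = 3^{n/6}$), and verifying that distinct maximal independent sets of this graph extend to genuinely distinct maximal distinct sum-free sets of $G$ (i.e.\ that the extension map is injective — this needs that the ``rest'' of $A$ or of $G$ forces compatibility, exactly as in the $A_f\ne A_g$ argument above). Concretely, with the type-\ri(2) structure $\mu(G) = n/2$, a maximal sum-free set is $A = \varphi^{-1}(\{2,3,5\})$-type construction under a homomorphism to $\Z_6 = \Z_2\oplus\Z_3$ (Green--Ruzsa describe the extremal sets here as unions of cosets), and choosing $s$ with $\varphi(s)$ in the appropriate residue makes the link graph a union of triangles; I would set this up by an explicit choice of $\varphi\colon\Z_2^k\oplus\Z_3\to\Z_6$ and check the three defining Schur relations per triangle by hand, then argue injectivity by noting any two extensions that agreed on a triangle but differed elsewhere would already violate sum-freeness against a fixed auxiliary coset element.
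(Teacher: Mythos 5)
Your upper bound matches the paper's: both reduce to $f^*_{\max}(G)\le 3^{\mu^*(G)/3+o(n)}$, which is exactly ``container lemma plus Moon--Moser on the link graph,'' and the self-correction to $\mu(G)=n/2$ for the type~\ri(2) group $\Z_2^k\oplus\Z_3$ is essential and correct. Your final intuition for the lower bound is also right: one should produce a family of $3^{2^{k-1}}$ distinct sum-free sets indexed by independent choices of one vertex from each of $2^{k-1}$ triangles, and use a fixed auxiliary element $s$ to force distinct maximal extensions. This is precisely what the paper does, with $s=(0,\dots,0,1)$ and $A_f=\{(1,x,f(x)):x\in\Z_2^{k-1}\}\cup\{s\}$ for $f\colon\Z_2^{k-1}\to\Z_3$: if $g(x)=f(x)+1$ then $(1,x,f(x))+s=(1,x,f(x)+1)$ is a distinct Schur triple inside $A_f\cup A_g$, so $A_f,A_g$ cannot share a maximal extension.

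The gap is that you never pin this construction down; you explicitly defer it as ``I would set this up by an explicit choice of $\varphi\colon\Z_2^k\oplus\Z_3\to\Z_6$ and check\dots by hand.'' Worse, your first attempt ($f\colon\Z_2^k\to\{1,2\}$, $A_f=\{(v,f(v))\}$) would, if correct, give $f^*_{\max}(G)\ge 2^{2^k}=2^{n/3}$, which \emph{exceeds} the upper bound $3^{n/6+o(n)}$ since $2^{1/3}>3^{1/6}$; this should have been a red flag. Indeed the injectivity claim there fails: take $f,g$ differing only at $v=0$; then $(0,1)+(0,2)=(0,0)\notin A_f\cup A_g$, and no third element of $A_f\cup A_g$ completes a distinct Schur triple with $(0,1),(0,2)$, so $A_f\cup A_g$ is still distinct sum-free and $A_f,A_g$ may extend to the same maximal set. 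There is also a subtlety you glide past: the ordinary link graph $L_S[A]$ with $A=\varphi^{-1}(1)$ and $S=\{s\}$ is not a disjoint union of clean triangles but of triangles \emph{with loops} (at the $2^{k-1}$ solutions of $2x=s$ in $A$), so its $\mis$ is $2^{2^{k-1}}$, not $3^{2^{k-1}}$ --- this is why the paper works with the auxiliary element $s$ adjoined to each $A_f$ directly (equivalently, with the distinct link graph $L^*_S[A]$) rather than with $L_S[A]$. So: correct framework and correct intuition, but the lower-bound construction and the injectivity verification --- the heart of the proposition --- are left as future work and the intermediate attempts toward them are flawed.
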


\begin{proof}
    The upper bound follows from the known facts that $\mu^*(\Z_2^k \oplus \Z_3)= n/2$, and that for every group $G$ we have $f^*_{\max}(G)\le 3^{\mu^*(G)/3+o(n)}$.
    
    For the lower bound, let $s= (0,\dots,0,1)$. We define subsets of $A = \{1\} \oplus \Z_2^{k-1} \oplus \Z_3$ as follows. For each function $f\colon\Z_2^{k-1} \to \Z_3$, denote $A_f = \left\{(1,x,f(x)):x \in \Z_2^{k-1}\right\} \cup \{s\}$. Then each $A_f$ is distinct sum-free and extends to a maximal distinct sum-free set. Moreover, for any functions $f \neq g$, $A_f \cup A_g$ is not distinct sum-free. Indeed, without loss of generality suppose $g(x) = f(x) + 1$ for some $x \in \Z_2^{k-1}$, then $(1,x,f(x)),(1,x,f(x)+1),s \in A_f\cup A_g$, with $(1,x,f(x))+s=(1,x,f(x)+1)$. Thus $A_f$ and $A_g$ cannot extend to the same maximal distinct sum-free set, and $f^*_{\max}(G)$ is at least the  number of such functions $f$, which is $3^{n/6}$.
\end{proof}

For odd groups, there are several ways to construct counterexamples. We present one of them.

\begin{proposition}\label{prop:counter2}
    For each prime $p \geq 23$ with $p \equiv 2\ (\text{mod}\ 3)$, one has
    \[
    f^*_{\max}\left(\Z_3^2 \oplus \Z_p\right) \geq 3^p \ge 2^{\mu(G)/2} = 2^{3(p+1)/2}.
    \]
\end{proposition}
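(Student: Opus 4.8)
The plan is to adapt the construction from the proof of \cref{prop:counter1}. Write $G=\Z_3\oplus\Z_3\oplus\Z_p$ with coordinates $(a,b,c)$, put $s=(0,1,0)$, and for each function $f\colon\Z_p\to\Z_3$ set
\[
A_f=\{(1,f(c),c)\colon c\in\Z_p\}\cup\{s\}.
\]
I would show that the $3^p$ sets $A_f$ extend to pairwise distinct maximal distinct sum-free subsets of $G$; since $f^*_{\max}(G)$ counts maximal distinct sum-free sets, this gives $f^*_{\max}(G)\ge 3^p$ at once. Being a lower-bound construction, it presents no serious obstacle; the only point needing care is the first-coordinate bookkeeping that rules out distinct Schur triples.

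First I would check that each $A_f$ is distinct sum-free. Every element of $A_f$ has first coordinate $1$ except $s$, which is moreover the unique element of first coordinate $0$. In a distinct Schur triple $\{u,v,w\}\subseteq A_f$ with $u+v=w$, the first coordinates satisfy $c_u+c_v=c_w$ in $\Z_3$; since $1+1=2$, the triple cannot consist of three first-coordinate-$1$ elements, and $w=s$ is impossible (it would require $c_u=c_v=1$, $c_w=0$). Hence $s\in\{u,v\}$, say $u=s$ and $v=(1,f(c),c)$, so $w=s+v=(1,f(c)+1,c)$; as $w$ has first coordinate $1$, membership $w\in A_f$ would force $w=(1,f(c'),c')$ for some $c'\in\Z_p$, hence $c'=c$ and $f(c)+1=f(c)$, impossible in $\Z_3$. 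So $A_f$ is distinct sum-free and, by finiteness, extends to some maximal distinct sum-free set $B_f$. Next, for $f\ne g$ pick $c_0$ with $f(c_0)\ne g(c_0)$; after swapping $f$ and $g$ if needed we may assume $g(c_0)=f(c_0)+1$ in $\Z_3$. Then $\{s,\,(1,f(c_0),c_0),\,(1,g(c_0),c_0)\}\subseteq A_f\cup A_g$ is a distinct Schur triple (its three elements are pairwise distinct: $s$ has first coordinate $0$, the other two have first coordinate $1$ and differ in their second coordinate). So $A_f\cup A_g$ is not distinct sum-free; hence $A_f$ and $A_g$ lie in no common distinct sum-free set, and in particular $B_f\ne B_g$. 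Thus $f\mapsto B_f$ is injective and $f^*_{\max}(G)\ge 3^p$.

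Finally I would compare $3^p$ with $2^{\mu(G)/2}$. Since $|G|=9p$ and the only primes dividing $|G|$ are $3\equiv0$ and $p\equiv2\pmod 3$, the group $G$ is of type \ri($p$); hence $\mu(G)=\bigl(\tfrac13+\tfrac1{3p}\bigr)\cdot 9p=3(p+1)$ and $2^{\mu(G)/2}=2^{3(p+1)/2}$. Taking base-$2$ logarithms, $3^p\ge 2^{3(p+1)/2}$ is equivalent to $p\log_2 3\ge\tfrac32(p+1)$, i.e.\ $\bigl(\log_2 3-\tfrac32\bigr)p\ge\tfrac32$; since $\log_2 3-\tfrac32>0.084$, this holds for all $p\ge 18$, in particular for every prime $p\ge 23$. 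Together with the previous paragraph this proves the proposition; the hypothesis $p\ge 23$ is what makes the construction beat $2^{\mu(G)/2}$, as $p=17\equiv2\pmod 3$ already fails the numerical inequality.
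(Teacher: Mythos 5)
Your proof is correct and follows essentially the same route as the paper's: you use the identical construction ($s=(0,1,0)$ and the $3^p$ sets $A_f$), show each $A_f$ is distinct sum-free and that distinct $A_f,A_g$ cannot lie in a common maximal distinct sum-free set, and then verify the numerical comparison with $2^{\mu(G)/2}=2^{3(p+1)/2}$. The paper leaves the verification steps as ``similarly to the proof of Proposition \ref{prop:counter1}''; you have merely spelled out the same details, including the observation that $p\ge 18$ (hence $p\ge 23$ for the relevant primes) is exactly what makes $3^p\ge 2^{3(p+1)/2}$.
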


\begin{proof}
    Let $s=(0,1,0)$ and define $A_f=\left\{(1,f(x),x):x \in \Z_p\right\} \cup \{s\}$, with $f\colon\Z_p \to \Z_3$. Similarly to the proof of~\cref{prop:counter1}, each $A_f$ is distinct sum-free and two sets $A_f$ and $A_g$ cannot extend to the same maximal distinct sum-free set. Thus we have $f^*_{\max}(G) \geq 3^{p}$.
\end{proof}

\section{All but one even-order group has few maximal sum-free sets}\label{sec:even-order}

In this section, we prove~\cref{thm:even}, i.e., that $f_{\max}(G)$ is exponentially smaller than $2^{n/4}$ for every  even group $G$ other than $\Z_2^k$. We need the following statement.

\begin{theorem}[\cite{2021GroupsLiu}]\label{thm:l-s}
    There exists a constant $c > 10^{-4}$ such that the following holds. For every $C\ge 10^{30}$, there is an integer $n_0=n_0(C)$ such that for every Abelian group $G = \Z_{2^{\alpha_1}} \oplus \ldots \oplus \Z_{2^{\alpha_r}} \oplus K$ with even order $n>n_0$, $|K|$ odd and $n/2^r\ge C$, we have
    \[
    f_{\max}(G) \leq 2^{(1/4-c)n}.
    \]
\end{theorem}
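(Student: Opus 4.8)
Since $n$ is even, $2\equiv 2\pmod 3$ divides $n$, so $G$ has type \ri(2) and $\mu(G)=\big(\tfrac13+\tfrac16\big)n=n/2$; hence the target $2^{(1/4-c)n}$ equals $2^{(1/2-2c)\mu(G)}$, the regime of \cref{thm:matching}. The plan is to run the container argument exactly as in the proof of \cref{thm:intermediate-growth}: apply \cref{lem:container} with a tiny $\delta>0$, so that every maximal sum-free set of $G$ is a maximal extension inside some maximal sum-free $A$ of a sum-free $S\subseteq B$ with $|B|\le\delta n$, whence
\[
f_{\max}(G)\ \le\ 2^{2\delta n}\cdot\max_{A,S}\msf(A\cup S;S)\ \le\ 2^{2\delta n}\cdot\max_{A,S}\mis\big(L_S[A]\big).
\]
It thus suffices to prove $\mis(L_S[A])\le 2^{(1/4-c_0)n}$ for an absolute constant $c_0$ safely above $10^{-4}$ (say $c_0=10^{-3}$) and then take $\delta$ small. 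If $|A|\le(\tfrac12-\eta)n$ for a suitable small $\eta>0$ (e.g.\ $\eta=0.03$), the Moon--Moser bound $\mis\le 3^{|A|/3}$ already beats $2^{(1/4-c_0)n}$. Otherwise $|A|>(\tfrac12-\eta)n>\tfrac49 n$, so \cref{lem:stability} (type \ri(2), $k=0$) forces $A=\varphi^{-1}(\{1\})$ for a surjection $\varphi\colon G\to\Z_2$: $A$ is the nontrivial coset of $H:=\ker\varphi$, $|A|=n/2$, $S\subseteq H$ is sum-free, nonempty (else $\mis=1$), and $0\notin S$.

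The core is a structural description of $\Gamma:=L_S[A]$. Put $H_0:=\langle S\rangle\le H$. A ``difference'' edge $x\sim x\pm s$ ($s\in S$) stays within the coset $x+H_0$, while a ``sum'' edge $x\sim s-x$ maps $x+H_0$ onto $(-x)+H_0$; hence every component of $\Gamma$ lives on a block $(x+H_0)\cup((-x)+H_0)$, which is a single coset of $H_0$ if $2x\in H_0$ and two cosets otherwise, and $A$ splits into at least $n/(4|H_0|)$ such blocks. I would then classify the induced graph on a block: if $S$ contains two ``independent'' elements $s,t$ then each $x$ has the four distinct neighbours $x\pm s,x\pm t$, so one can read off disjoint degree-$\ge 4$ neighbourhoods and invoke \cref{lem:degree4} (as in the proof of \cref{thm:even*}); if $S\subseteq\{s,-s\}$, the $+s$-orbits decompose the block into cycles $C_{|s|}$, and the reflection $x\mapsto s-x$ either stabilises the coset, adding chord edges and—by a parity count—at least one loop from an identity $2x=s$, or swaps two cosets, adding a perfect matching and no loops, producing a $C_{|s|}$-with-chords graph or a $T_{|s|}/T_{|s|}^{+}$-type graph. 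The key step is a per-block inequality: after deleting loop-vertices, each block $B$ is \emph{either} a vertex-disjoint union of $K_2$'s, $K_3$'s, $K_4$'s, $D_6$'s—which I claim forces $2x\in H_0\cap 2G$ on $V(B)$ and, checking the finitely many small blocks, degenerates to a single $K_2$, so $B$ meets $\{x\in A:2x=0\}$—\emph{or} it carries $\Omega(|B|)$ vertex-disjoint induced $C_4$'s, $\Omega(|B|)$ vertex-disjoint $T_\ell$'s, $\Omega(|B|)$ loop-vertices, or $\Omega(|B|)$ disjoint degree-$\ge 4$ neighbourhoods, each of which gives, via \cref{lem:c4,lem:tl,lem:degree4}, the bound $\mis(C_\ell)<1.4^{\ell}$ for $\ell\ge 4$, or the collapse of a triangle block to $K_2$ ($\mis=2<2^{3/2}$), a uniform gain $\mis(B)\le\rho\cdot 2^{|B|/2}$ with $\rho<1$ absolute.

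Granting this, the ``type-B'' blocks together cover at most $O\big(|\{x\in A:2x=0\}|\big)=O(2^r)$ vertices; since $2^r=n/|K|\le n/C$, these multiply $\mis(\Gamma)$ by at most $3^{O(2^r)}\le 2^{\varepsilon(C)\,n}$ with $\varepsilon(C)\to 0$ as $C\to\infty$, while on the remaining $\ge(1-\varepsilon(C))\tfrac n2$ vertices the per-block bounds multiply to $\le \rho^{\,\Omega(n)}\,2^{n/4}$. Hence $\mis(\Gamma)\le 2^{(1/4-c_0)n}$ once $C$ is large (any $C\ge 10^{30}$ works), and with $\delta$ small we obtain $f_{\max}(G)\le 2^{(1/4-c)n}$ for a fixed $c>10^{-4}$. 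The cleanest instance is $|s|=2$: then $\Gamma$ contains $\ge n/8-O(2^r)$ vertex-disjoint induced $4$-cycles $\{x,x+s,-x,s-x\}$, and \cref{lem:c4} alone yields $\mis(\Gamma)\le 2^{(0.15+o(1))n}$, far below target.

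The main obstacle is the per-block classification, in particular for a \emph{general} (multi-element) $S$: enlarging $S$ both adds edges (which can raise $\mis$ locally) and adds loops, forcing one to work with the coset structure of the whole subgroup $\langle S\rangle$ rather than of a single cyclic $\langle s\rangle$, and to argue that the only way a positive-density portion of $A$ fails to earn a per-vertex gain is to be partitioned into $K_2$'s. Here one checks that $K_3$ blocks always contribute a loop and collapse to $K_2$, and that $K_4$ and $D_6$ never arise as link-graph components (the required ``diagonal'' edges would produce a Schur triple inside $S$); this pins down $\Omega(n)$ solutions of $2x=0$, forcing $2^r=\Omega(n)$ and contradicting $n/2^r\ge C$. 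Making this robust—especially in the thin-gain tiny cases—is the technical heart; the remainder is bookkeeping against \cref{lem:degree4,lem:tl,lem:c4}.
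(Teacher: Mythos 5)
First, a point of reference: the paper does not prove this statement at all --- it is quoted verbatim from Liu and Sharifzadeh \cite{2021GroupsLiu} and used as a black box, complementing \cref{thm:even-counterpart} (which the paper \emph{does} prove, for the opposite regime $2\le n/2^r\le C$). So there is no in-paper proof to compare against; your proposal is an attempt to reprove an external result with this paper's toolkit, and it has to be judged on its own.

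As a proof it is not complete, and the gap sits exactly where you place it yourself (``the technical heart''). The container reduction, the stability step, and the decomposition of $A$ into blocks $(x+H_0)\cup(-x+H_0)$ with $H_0=\langle S\rangle$ are all fine, and your $|s|=2$ case via \cref{lem:c4} genuinely works. The problem is the regime that makes $n/2^r\ge C$ hard in the first place: here the minimal order $|s|$, and hence $|H_0|$ and the block size $\ell$, are \emph{unbounded} (e.g.\ $G=\Z_2\oplus\Z_q\oplus\Z_q$ with $S=\{\pm(0,1,0)\}$ gives components $T_q$ with $q=\sqrt{n/2}$; $G=\Z_2\oplus\Z_{n/2}$ with $S$ generating gives a single block of size $n/2$). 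The quantitative lemmas you invoke --- \cref{lem:tl} and \cref{lem:degree4} --- deliver only a \emph{constant} multiplicative saving ($31/32$, resp.\ $59/64$) per copy of $T_\ell$ or per set $V_i$, independent of $\ell$. When there are $\Theta(n/\ell)$ blocks of size $\ell\to\infty$, the total saving is $\rho^{O(n/\ell)}$, which is subexponential in $n$ and nowhere near $2^{-cn}$. (This is precisely why the paper's own proof of \cref{thm:even-counterpart} needs $n/2^r\le C$: it forces $|s|\le 2C$ and $\ell\le 4C^2$, so the bounded-size structures occur $\Omega_C(n)$ times.) To close this you would need genuinely new per-vertex savings for large blocks, e.g.\ $\mis(T_\ell),\mis(T_\ell^+)\le(2-\varepsilon)^{\ell}$ for an absolute $\varepsilon>0$ and an analogous bound for the degree-$\ge4$ block graphs --- statements that are plausible but are neither in the paper nor supplied by you. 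Separately, the dichotomy ``every block is type-B or carries $\Omega(|B|)$ gain structures,'' and the claims that $K_3$ blocks always carry a loop while $K_4$ and $D_6$ never arise as link-graph components, are asserted rather than proved; note also that since each block is a single connected component, a type-B block is a single $K_2$, $K_3$, $K_4$ or $D_6$, which forces $|H_0|\le 6$ and hence bounds \emph{all} blocks --- so your case analysis really splits into ``$|H_0|=O(1)$'' versus ``all blocks large and connected,'' and in the latter case \cref{thm:biggermatchingnumber} gives only a non-quantitative strict inequality. As it stands the argument establishes the theorem only when $|H_0|$ (equivalently, the orders of elements of $S$) is bounded, which is not implied by the hypothesis $n/2^r\ge C$.
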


We shall prove the following counterpart of  \cref{thm:l-s}, from which~\cref{thm:even} follows.

\begin{theorem}\label{thm:even-counterpart}
   For each $C \geq 2$ there exists an integer $n_0$ such that the following holds. For every Abelian group
    $G = \Z_{2^{\alpha_1}} \oplus \ldots \oplus \Z_{2^{\alpha_r}} \oplus K$ with even order $n > n_0$, $|K|$ odd and $2 \leq n/2^r \leq C$, we have
    \[
    f_{\max}(G) \leq 2^{(1/4-c)n},
    \]
    where   $c = 1/600C^2$. 
\end{theorem}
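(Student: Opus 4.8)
The plan is to mirror the structure of the proof of \cref{thm:intermediate-growth} and \cref{thm:even*}, but now squeezing out an exponential gain in the range $2\le n/2^r\le C$ by exploiting \cref{lem:c4} (and the earlier \cref{lem:degree4,lem:tl}) together with a careful count of "bad" $C_4$- or short-cycle-components in the link graph. As before, apply \cref{lem:container} with a small $\delta$ (chosen after $c$, say $\delta \ll c/C^2$) to reduce, via the decomposition $\MSF(G)=\bigcup_{F\in\cF}\bigcup_{S\subseteq B\text{ s.f.}}\MSF(A\cup S;S)$ and $|\cF|\cdot 2^{|B|}\le 2^{2\delta n}$, to bounding $\mis(\Gamma)$ for $\Gamma=L_S[A]$ where $A$ is maximal sum-free and $S$ is sum-free with $A\cap S=\varnothing$; it suffices to show $\mis(\Gamma)\le 2^{(1/4-2c)n}$ with room to spare. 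If $|A|\le (1/4-2c)n/\log_3 2$-ish (concretely $|A|\le \bigl(\tfrac14-3c\bigr)n\cdot\frac{3}{\log_2 3}$, i.e.\ small enough that $3^{|A|/3}\le 2^{(1/4-2c)n}$), the Moon–Moser bound finishes it. So assume $|A|$ is close to $n/2$; apply \cref{lem:stability} to get a homomorphism $\varphi\colon G\to\Z_2$ with $A=\varphi^{-1}(1)$, $|A|=n/2$, and $S\subseteq\varphi^{-1}(0)$.

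Now the analysis splits on $S$ exactly as in \cref{thm:even*}. If $S=\varnothing$ or $S=\{0\}$ we get $\mis(\Gamma)\le 2^{n/4}$ trivially, but we need the \emph{exponential} improvement, so the key point is that in the interesting cases the link graph is forced to have many disjoint $C_4$'s (or $T_\ell$'s, or degree-$\ge 4$ blocks), and then \cref{lem:c4} (resp.\ \cref{lem:tl,lem:degree4}) beats $2^{n/4}$ by a factor $(4/7)^{k}$ (resp.\ $(31/32)^k$, $(59/64)^k$) where $k=\Omega(n/C)$. Concretely: pick $0\neq s\in S$ of minimal order. If $|s|=2$ we instead want a different witness — but since $2\le n/2^r$, the "$\Z_2^k$" obstruction is exactly the degenerate case, and as long as $n/2^r\ge 2$ one can use \cref{observation:solutions} to control the number of solutions to $2x=0$ and $2x=s$ in $A$: at most $n/(2\cdot n/2^r)\cdot$(a constant) of the vertices lie in such "short" components, so the bulk of $A$ decomposes into disjoint $C_{|s|}$'s with $|s|\ge 3$, paired up (using the $S$-action) into $T_{|s|}$ or $T_{|s|}^+$ or similar configurations of which a positive proportion are not the extremal type. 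For $|s|\ge 3$ one partitions $A$ into $\langle s\rangle$-cosets intersected with $A$; when $S\subseteq\{s,-s\}$ each component is a $C_{|s|}$, $T_{|s|}$ or $T_{|s|}^+$, and \cref{lem:tl} plus the known $\mis(C_\ell)<1.4^\ell$ give $\mis(\Gamma)\le 2^{(1/2-\varepsilon)|A|}$ for an explicit $\varepsilon=\varepsilon(|s|)$; when $S\not\subseteq\{s,-s\}$ one picks $t\in S\setminus\{s,-s\}$ and uses the $\langle s,t\rangle$-action to find, inside each such coset, many disjoint induced $C_4$'s of the form $\{x,x+s,x+s+t,x+t\}$ (or degree-$\ge4$ blocks $V_x$), applying \cref{lem:c4} or \cref{lem:degree4}. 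In every branch the number $k$ of guaranteed non-extremal gadgets is at least a fixed fraction of $n/|s|\ge n/C$ (using that $|s|$ divides the odd part times a power of $2$, so $|s|\le$ const$\cdot C$ in the relevant regime, or else the cosets are huge and one wins directly), which yields $\mis(\Gamma)\le (1-c')^{n/C}2^{n/4}\le 2^{(1/4-c)n}$ with $c=1/300C^2$ after optimizing constants.

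**The main obstacle** I anticipate is the bookkeeping in the case $|s|=3$ (where $\mis(T_3)=6=2^{\log_2 6}>2^{3/2}=\mis(K_2)^{3/2}$, so a $T_3$ component is actually \emph{worse} than the extremal $K_2$-union bound): here one genuinely needs that only $O(n/C)$ vertices can sit in $C_3$-components, which is precisely the content of \cref{observation:solutions} combined with the hypothesis that $G\ne\Z_2^k\oplus\Z_3$ is replaced here by $n/2^r\le C$ — so the count of $2x=0$ solutions is $\le 2^{r-1}\le n/4$, but we need it to be $\le$(small fraction)$\cdot n/C$; this forces a slightly more delicate argument, probably splitting further on whether the $3$-part (and more generally the "small odd order" part) of $G$ is large, and using that a $C_3$-component uses up $3$ vertices of which one solves $2x=s'$ for some $s'\in S$. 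Pinning down the exact constant $1/300C^2$ (rather than just \emph{some} $\Omega(1/C^2)$) will require being careful that the worst branch — presumably $|s|=3$ or $|s|=4$ with $S=\{s,-s\}$, or $\langle s,t\rangle$ having order $\approx C$ — is the one driving the bound, but none of this is conceptually hard; it is the routine-but-lengthy optimization that the paper defers. I would organize the write-up as: (1) container reduction; (2) small-$|A|$ case via Moon–Moser; (3) large-$|A|$ case, stability to $\Z_2$; (4) the $S$-structure casework invoking \cref{lem:c4,lem:tl,lem:degree4} and \cref{observation:solutions}; (5) collect constants.
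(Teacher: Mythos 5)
Your high-level plan (container reduction $\to$ small-$|A|$ via Moon--Moser $\to$ stability to a $\Z_2$-homomorphism $\to$ casework on $S$ invoking \cref{lem:c4,lem:tl,lem:degree4} and \cref{observation:solutions}) matches the paper's skeleton, and you correctly identify that the win comes from finding $\Omega(n/C)$ non-extremal gadgets in the link graph. However, your handling of the $|s|=2$ branch has a genuine gap. You write that ``the bulk of $A$ decomposes into disjoint $C_{|s|}$'s with $|s|\ge 3$, paired up into $T_{|s|}$ or $T_{|s|}^+$,'' but when the minimal-order $s$ has $|s|=2$ there is no such decomposition available: the $\langle s\rangle$-orbits in $A$ are just edges $\{x,x+s\}$, and $S$ may well consist entirely of order-$2$ elements, so you cannot switch to a larger-order witness. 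The missing idea is the negation pairing: when $2A\cap S=\varnothing$ (the other sub-case being handled by loops via \cref{observation:solutions}), the edges $\{x,x+s\}$ and $\{s-x,-x\}$ are either vertex-disjoint or coincide, and in the disjoint case the four points form an \emph{induced} $C_4$ (induced because $0\notin S$ and $2A\cap S=\varnothing$ rule out the two chords). Counting coincidences via $2x=0$ then gives $k\ge n/16$ induced $C_4$'s to feed into \cref{lem:c4}. Without this you do not get a $C_4$/\ gadget supply in the $|s|=2$ branch, and the exponential saving there is unjustified.

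Two smaller points. First, your ``main obstacle'' concern about $T_3$ is misdirected: $\mis(T_3)=6<8=2^{\nu(T_3)}$, so $T_3$ components are actually fine; it is $C_3$ components ($\mis(C_3)=3>2^{3/2}$) that are problematic, and the paper's treatment is clean — when $G=\Z_2^k\oplus\Z_3$ all triangles carry a loop (one solution of $2x=s$ each), and otherwise $3\mid n$ together with $G\ne\Z_2^k\oplus\Z_3$ forces $n/2^r\ge 6$, capping the number of $C_3$'s at $n/12$ exactly as in \eqref{dis41}; no further ``splitting on the size of the $3$-part'' is needed. Second, your bound on the number of solutions of $2x=0$ (``$n/(2\cdot n/2^r)\cdot$ const'') is $\Theta(2^r)$, which can be as large as $n/4$ when $n/2^r=2$; that alone does not show a ``bulk'' of $A$ avoids these vertices, and the argument really does need the $C_4$-pairing to extract $k=\Omega(n)$ gadgets from the roughly half of $A$ that remains.
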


\begin{proof}
    For a maximal sum-free set $A$ and a sum-free set $S$ with $A \cap S = \varnothing$, we construct the link graph $\Gamma = L_S[A]$. By~\cref{lem:mis-reduction}, it suffices to prove that for all such pairs $(A,S)$, one has
    \[
    \mis(\Gamma) \leq 2^{n/4-2cn}.
    \]
    We have  a case analysis similar to the one  in the proof of \cref{thm:even*}. Note that as $S$ is sum-free, $0 \notin S$.

    If $|A| \leq 4n/9$, then $\mis(\Gamma) \leq 3^{4n/27} < 2^{n/4-2cn}$ by the choice of $c$.
    
    We may assume that $|A| > 4n/9$. Apply \cref{lem:stability} to find a homomorphism $\varphi\colon G \to \Z_2$ with $A = \varphi^{-1}(1)$. Then, $|A|=n/2$. Pick $s \in S$ with minimal order. We know that if $G = \Z_{2^{\alpha_1}} \oplus \ldots \oplus \Z_{2^{\alpha_r}} \oplus K$ with $\alpha_1 \leq \ldots \leq \alpha_r$, then $s$ has order
    \[
    |s| \leq  2^{\alpha_r}\cdot |K|= \frac{n}{2^{\alpha_1}2^{\alpha_2}\cdot\ldots\cdot 2^{\alpha_{r-1}}} \leq\frac{n}{2^{r-1}} \le 2C.
    \]

    We first discuss the case when $|s| = 2$. For each $x \in A$, $\{x,x+s\}$ is an edge in $\Gamma$, and thus the matching number of $\Gamma$ is $|A|/2=n/4$.
    
    If $2x = s'$ has a solution in $A$ for some $s' \in S$, then by \Cref{observation:solutions}, it has at least $2^{r-1}$ solutions in $A$. Fix such an $s'$ and let $x_1,\dots,x_{2^{r-1}} \in A$ be solutions to $2x = s'$. Then they cannot appear in any independent set of $\Gamma$, since there are loops at them. Since $\Gamma-x_1-\ldots-x_{2^{r-1}}$ has matching number at least $n/4-2^{r-1}$, \cref{thm:matching}  gives the desired bound
    \[
    \begin{split}
    \mis(\Gamma) &= \mis(\Gamma-x_1-\dots-x_{2^{r-1}}) \leq 2^{3\left(n/4-2^{r-1}\right)-\left(n/2-2^{r-1}\right)}3^{\left(n/2-2^{r-1}\right)-2\left(n/4-2^{r-1}\right)}\\
    &= \left(\frac{3}{4}\right)^{2^{r-1}}2^{n/4} \le \left(\frac{3}{4}\right)^{n/2C}2^{n/4} < 2^{n/4-2cn}.
    \end{split}
    \]

    We can now assume that $2A \cap S = \varnothing$. If the two edges $\{x,x+s\}$ and $\{s-x,-x\}$ are vertex disjoint, then there is a matching between them (edges $\{x, s-x\}, \{-x, x+s\}$), and they together form a $C_4$ in $\Gamma$ (we argue later that it is actually an induced $C_4$). If they are not vertex disjoint, then  they coincide, and  it must be that $2x = 0$, since $x = s-x$ or $x+s = -x$ would imply $2A \cap S \neq \varnothing$.
    
    Denote $m$  the number of coincided edges. Recall that each such edge is of the form $\{x_i,x_i+s\}$ with both endpoints being a solution to the equation $2x = 0$. As there are at most $2^{r-1}$ solutions to $2x = 0$ in $A$ due to~\cref{observation:solutions} and $A=\varphi^{-1}(1)$, we have $m \leq 2^{r-2}\le\frac{n}{8}$, and hence the number of $C_4$'s is
    \[
    k := \frac{\frac{n}{2}-2\cdot \#\ \text{coincided edges}}{4} = \frac{\frac{n}{2}-2m}{4}=\frac{n}{8}-\frac{m}{2}\geq \frac{n}{16}.
    \]
    We claim that each such $4$-cycle $\{x,x+s,-x,s-x\}$ is an induced $C_4$. If not, then $\{x,-x\}$ or $\{s+x,s-x\}$ is an edge in $\Gamma$. Either $x+(-x) = 0 \in S$, or $\pm(x-(-x)) = \pm2x \in S$, contradicting the fact that $S$ is sum-free and $2A \cap S = \varnothing$. Thus by \cref{lem:c4} with $(k,m,n)_{\ref{lem:c4}}=(\frac{n}{8}-\frac{m}{2},m,\frac{n}{2})$, we have
    \[
    \mis(\Gamma)  \leq \left(\frac{4}{7}\right)^{k}2^{n/4}\le \left(\frac{4}{7}\right)^{n/16}2^{n/4} < 2^{n/4-2cn}.
    \]

    We next consider the case when $|s| > 2$, and $S \subseteq \{s,-s\}$. If $|s| = 3$ and $G \neq \Z_2^k \oplus \Z_3$, then we follow the proof of \cref{thm:even*} (see \eqref{dis41}) to obtain
    \[
    \mis(\Gamma) \leq 3^{n/12}6^{n/24} < 2^{n/4-2cn}.
    \]
    If $G = \Z_2^k \oplus \Z_3$, then $s = \pm(0,\dots,0,1)$, and $\Gamma$ is the graph consisting of $2^{k-1}$ isolated triangles $\{x,x+s,x+2s\}_{x\in A}$, each with a loop (at a vertex that corresponds to a solution to $2x=s$). We have
    \[
    \mis(\Gamma) = 2^{2^{k-1}} = 2^{n/6} < 2^{n/4-2cn}.
    \]
    If $4 \leq |s| \le 2C$, we have from the proof of   \cref{thm:even*} that the distinct link graph $\Gamma^*=L_S^*[A]$ is a disjoint union of $C_{|s|}$ and $T_{|s|}$ or $T_{|s|}^+$. Let the number of $C_{|s|}$ and $T_{|s|}$ (or $T_{|s|}^+$) be $\ell$ and $k$, respectively. Then we have
    \[
    \ell|s|+2k|s| = |A| = \frac{n}{2}.
    \]
    Hence, since $|s| \le 2C$ and recall that $\Gamma$ is a supergraph of $\Gamma^*$ with some additional loops, we see that
    \[
    \begin{split}
    \mis(\Gamma) &\leq \mis\left(\Gamma^*\right) < 1.4^{|s|\ell}31^k2^{|s|k-5k}\\
    &= \left(\frac{1.4^2}{2}\right)^{|s|\ell/2}\left(\frac{31}{32}\right)^{k}2^{|s|\ell/2+|s|k} < \left(\frac{31}{32}\right)^{n/4|s|}2^{n/4} \le \left(\frac{31}{32}\right)^{n/8C}2^{n/4} < 2^{n/4-2cn}.
    \end{split}
    \]
    
    It remains to handle the case that $S \not\subseteq \{s,-s\}$. We use the notation $t,V_x,\ell$ as in the proof of \cref{thm:even*}. Then
    \[
    \ell \leq |s|\cdot|t| < 2C \cdot 2C = 4C^2.
    \]
    Hence
    \[
    \mis(\Gamma) \leq \mis\left(\Gamma^*\right) \leq 59^{n/2\ell}2^{n(\ell-15)/4\ell}3^{n/2\ell} = \left(\frac{3^2 \cdot 59^2}{2^{15}}\right)^{n/4\ell}2^{n/4} < 0.96^{n/16C^2}2^{n/4} < 2^{n/4-2cn}.
    \]
    This completes the proof.
\end{proof}

\paragraph{Acknowledgement.}
The first and the third author are grateful for BIRS (The Institute for Advanced Study in Mathematics)  hosting  the``Finite Geometry and Extremal Combinatorics" workshop in Hangzhou, China, in June  2025, where they have initiated this project. When finishing the project, we learned that Shagnik Das and Tuan Tran have obtained a result  similar to~\cref{thm:matching}.

\bibliographystyle{abbrv}
\bibliography{bib}

\end{document}